\tikzset{degil/.style={
            decoration={markings,
            mark= at position 0.5 with {
                  \node[transform shape] (tempnode) {$\bigtimes$};
                  }
              },
              postaction={decorate}
}
}
\newtheorem{theorem}{Theorem}[section]
\newtheorem{lemma}[theorem]{Lemma}
\newtheorem*{lemma*}{Lemma}
\newtheorem{proposition}[theorem]{Proposition}
\newtheorem{corollary}[theorem]{Corollary}
\theoremstyle{definition}
\newtheorem{definition}[theorem]{Definition}
\theoremstyle{remark}
\newtheorem{remark}[theorem]{Remark}
\numberwithin{equation}{section}
\newcommand{\A}{\mathbb{A}}
\newcommand{\C}{\mathbb{C}}
\newcommand{\D}{\partial}
\newcommand{\DD}{\mathbb{D}}
\newcommand{\R}{\mathbb{R}}
\DeclareMathOperator{\dist}{dist}
\DeclareMathOperator{\loc}{loc}
\def\XXint#1#2#3{{\setbox0=\hbox{$#1{#2#3}{\int}$}
\vcenter{\hbox{$#2#3$}}\kern-.5\wd0}}
\def\le{\leqslant}
\renewcommand{\Re}{\operatorname{Re}}
\def \D{\textup{D}}
\def \tp{\textup}
\def \wstar{\overset{\ast}{\rightharpoonup}}
\newcommand{\Rn}{\mathbb{R}^{n}}
\newcommand{\Leb}{\mathscr{L}}
\newcommand{\N}{\mathbb{N}}
\newcommand{\M}{\mathbb{R}^{2 \times 2}}
\newcommand{\WW}{\mathrm{W}}
\newcommand{\LL}{\mathrm{L}}
\newcommand{\CC}{\mathrm{C}}
\newcommand{\dd}{\mathrm{d}}
\newcommand{\B}{\mathbf{B}}
\newcommand{\weak}{\rightharpoonup}
\newcommand{\weakstar}{\overset{\ast}{\rightharpoonup}}
\newcommand{\restrict}{\begin{picture}(10,8)\put(2,0){\line(0,1){7}}\put(1.8,0){\line(1,0){7}}\end{picture}}
\definecolor{green}{RGB}{40,160,90}
\definecolor{orange}{RGB}{180,80,0}
\author{K.~Astala}
\address{Department of Mathematics and Statistics, University of Helsinki, Finland}
\email{kari.astala@helsinki.fi}
\author{D.~Faraco}
\address{Departamento de Matematicas, Universidad Autonoma de Madrid,  Spain and ICMAT CSIC, Madrid, Spain}
\email{daniel.faraco@uam.es}
\author{A.~Guerra}
\address{Institute for Theoretical Studies ETH-ITS, Z\"urich, Switzerland}
\email{andre.guerra@eth-its.ethz.ch}
\author{A.~Koski}
\address{Department of Mathematics and Systems Analysis, Aalto University, Espoo, Finland}
\email{aleksis.koski@aalto.fi}
\author{J.~Kristensen}
\address{Mathematical Institute, University of Oxford, United Kingdom}
\email{kristens@maths.ox.ac.uk}
\begin{document}
\baselineskip5mm
\title[Lower semicontinuity  ]{Lower semicontinuity, Stoilow factorization and principal maps }

\maketitle

\begin{center}
  \emph{Dedicated to Vladimir \v{S}ver\'ak on the occasion of his 65th birthday}
\end{center}
\begin{abstract}  
We consider a strengthening of the usual quasiconvexity condition of Morrey in two dimensions, which allows us to prove lower semicontinuity for functionals which
are unbounded as the determinant vanishes. This notion, that we call \textit{principal quasiconvexity}, arose from the planar theory of quasiconformal mappings
and mappings of finite distortion. We compare it with other quasiconvexity conditions that have appeared in the literature and provide a number of concrete
examples of principally quasiconvex functionals that are not polyconvex. The Stoilow factorization, that in the context of maps of integrable distortion was
developed by Iwaniec and \v{S}ver\'ak, plays a prominent role in our approach.
\end{abstract}

\section{Introduction}
Given a real $n \times n$ matrix $A$, its (outer) distortion is defined by
$$
K_A \equiv \left\{
\begin{array}{cl}
  |A|^{n}/ \det A & \mbox{ if } \det A > 0,\\
  1 & \mbox{ if } A = 0,\\
  +\infty & \mbox{ if } A \neq 0 \mbox{ and } \det A \leq 0,
\end{array}
\right.
$$
where $| A | \equiv \max_{x \in \mathbb{S}^{n-1}} |Ax|$ is the operator norm and $|Ax|$ is the usual euclidean norm of $Ax \in \Rn$. 
When $\Omega$ is an open, connected, non-empty subset of $\Rn$ (henceforth called a domain) and $u \colon \Omega \to \Rn$ is locally of Sobolev class $\WW^{1,1}$
in $\Omega$, denoted $u \in \WW^{1,1}_{\loc}( \Omega , \Rn )$, its \textit{distortion function} $K_{u}(x) \equiv K_{\D u(x)}$ is an $\Leb^n$-almost everywhere
defined extended real-valued function on $\Omega$.
If the distortion $K_{u} < +\infty$ almost everywhere and the \textit{Jacobian} $J_u \equiv J_{\D u} \equiv \det \D u \in \LL^1_{\loc}( \Omega )$, then $u$ is called
a \textit{mapping of finite distortion}, and if furthermore $K_{u}  \in \LL^{\infty}( \Omega )$, then $u$ is called a \textit{weakly quasiregular map}.
A striking result of Reshetnyak states that a \textit{quasiregular map}, so a map $u \in \WW^{1,n}_{\loc}( \Omega , \Rn )$ with $K_u \in \LL^{\infty}( \Omega )$,
is either constant or admits a representative that is continuous, open and discrete.
In the two-dimensional case, $n=2$, that is our focus here, this result is a consequence of the so-called \textit{Stoilow factorization}. When $\Omega$ is a domain in
$\C$ it asserts that a non-constant map $f \colon \Omega  \to \C$ is continuous, open and discrete if and only if it admits the factorization
$f =H \circ F$, where $F \colon \Omega \to \C$ is a homeomorphism and $H \colon F( \Omega ) \to \C$ is holomorphic (see \cite{Stoilow28}, and \cite{LuistoPankka20}
for a modern exposition). 
In \cite{IwaniecSverak93} Tadeusz Iwaniec and Vladimir \v{S}ver\'ak brought together ideas from Geometric Function Theory and Nonlinear Elasticity proving,
again in two dimensions, that if $u \in \WW^{1,2}_{\loc}( \Omega ) \equiv \WW^{1,2}_{\loc}( \Omega , \C )$ is not a.e.~constant and has distortion function
$K_u  \in \LL^{1}( \Omega)$, then (the precise representative of) $u$ is continuous, open and discrete (see also  \cite{AIMbook} for various refinements).
The aim of this note is to give a systematic presentation of the notion of principal quasiconvexity considered first in \cite{AFGKKpreprint}. In particular we aim to
explain how it, in conjunction with Stoilow factorization and its extension by Iwaniec and \v{S}ver\'ak, allows to prove lower semicontinuity results in
situations where the energy functionals are not polyconvex and do not satisfy the standard growth assumptions. Relevant examples include the typical variational
models for isotropic planar hyperelasticity and the variational problems arising in geometric function theory.  

We briefly review some background results from the calculus of variations before elaborating further on the geometric function theory concepts, such as
Stoilow factorization, the distortion function and the principal maps. 

First recall \cite{Morrey,BM1} that when $\Omega$ is a bounded domain in $\Rn$ and the exponent $p \geq 1$, then the $\WW^{1,p}$-quasiconvexity of a functional
$\mathbf{E} \colon \R^{n \times n} \to \R$, meaning that 
$$ 
 {\bf E}(A)\leq \fint_{\Omega} {\bf E} (\D u(x)) \, \dd x \quad  \; {\rm for\, all} \; u \in A + \WW^{1,p}_0( \Omega , \Rn ) \; {\rm and} \;  A \in \M, 
$$ 
is equivalent to the sequential weak lower semicontinuity on $\WW^{1,p}( \Omega , \Rn )$ (briefly $\WW^{1,p}$-swlsc) of the corresponding variational integral
\begin{equation}\label{energy}
\mathscr{E}[u] \equiv \int_{\Omega} \! {\bf E}(\D u) \, \dd x,
\end{equation}
provided that ${\bf E}$ is non-negative and in addition has standard $p$-growth:
\begin{equation}\label{pgrowth}
| \mathbf{E}(A)| \leq c\bigl( |A|^{p}+1 \bigr) \quad \forall \, A \in \R^{n \times n} .
\end{equation}
It is easy to see that this equivalence persists if the functional $\mathbf{E}$ satisfies \eqref{pgrowth} but, instead of being non-negative, satisfies the weaker
condition
\begin{equation}\label{equi-neg}
\liminf_{|A| \to +\infty} \frac{\mathbf{E}(A)}{|A|^{p}} \geq 0.
\end{equation}
However, when a $\WW^{1,p}$-quasiconvex functional $\mathbf{E}$ has only the standard $p$-growth \eqref{pgrowth} for an exponent $p>1$, then the variational integral
\eqref{energy} can fail to be $\WW^{1,p}$-swlsc, due to possible concentration effects at the boundary \cite{Morrey,BM1,Marcellini1,AcerbiFusco,FoMa}. For instance, when
$n=p=2$ an example of this with $\mathbf{E}(A) = \det A$, $A \in \M$, can be found in \cite{Dacorogna2007}.
On the other hand, as observed by Meyers \cite{Meyers}, the $\WW^{1,p}$-swslc result persists provided the variational integral $\mathscr{E}$ is restricted to Dirichlet classes:
if $g \in \WW^{1,p}( \Rn , \Rn )$, then for a functional $\mathbf{E}$ satisfying
\eqref{pgrowth},  the variational integral $\mathscr{E}$ is $\WW^{1,p}$-swlsc on $g+\WW^{1,p}_{0}( \Omega , \Rn )$  if and only if $\mathbf{E}$ is $\WW^{1,p}$-quasiconvex.
We refer to \cite{BenKru} for further discussions and to \cite{ChJK} for some related results also linking quasiconvexity to coercivity of $\mathscr{E}$
in Dirichlet classes.

In the absence of the standard $p$-growth condition \eqref{pgrowth} the problem of $\WW^{1,p}$-swlsc is much harder, little can be said in the general case and most
results in the literature require some additional assumptions for their validity. In fact already the definition of the variational integral $\mathscr{E}[u]$ is unclear
at this level of generality. When ${\bf E}$ is allowed to be valued in $\R \cup \{ +\infty \}$, or perhaps even in $\R \cup \{ \pm \infty \}$, then we have issues
already for smooth and compactly supported maps $u$. The approach we shall follow here is in a sense the naive one, namely to use \eqref{energy}, but with the integral
interpreted as an upper Lebesgue integral. We often refer to this as the pointwise definition of the variational integral. The other approach to the definition
of the variational integral is to define it via a relaxation process, which in many situations is more natural and often means that the ensuing variational integral
has better properties. This approach is often called the Lebesgue-Serrin-Marcellini definition and in the present context goes back to \cite{Marcellini2}.
If we use the pointwise definition of the variational integral \eqref{energy}, then both the $\WW^{1,p}$-quasiconvexity and the rank-one convexity conditions remain
necessary for $\WW^{1,p}$-swlsc, see \cite{BM1}. However, neither is sufficient. The rank-one convexity does not even imply $\WW^{1,\infty}$-quasiconvexity, regardless
of growth conditions, by \v{S}ver\'{a}k's counter example \cite{Sverak92,Sverak95} except if the mapping is into the target $\R^2$, where the problem remains open, see also \cite{Grabovsky}.
In this connection we refer to \cite{FaracoSzekelyhidi08} and \cite{PSv} for the particularity of $\M$ matrices for this problem and also some partial results.
The sufficiency of $\WW^{1,p}$-quasiconvexity for $\WW^{1,p}$-swlsc of $\mathscr{E}$ on the Dirichlet class $g+\WW^{1,p}_{0}( \Omega , \Rn )$ is an open problem when the
functional ${\bf E}$ is assumed continuous. Easy counterexamples exist if the functional $\mathbf{E}$ is allowed to be discontinuous and extended real-valued, see \cite{BM1}. 
In this situation one introduces a strengthening of the $\WW^{1,p}$-quasiconvexity condition, the so-called {\it closed $\WW^{1,p}$-quasiconvexity} \cite{PedregalBook97}.
It is a sufficient condition for $\WW^{1,p}$-swlsc of $\mathscr{E}$ on $\WW^{1,p}( \Omega , \Rn )$, provided the functional $\mathbf{E}$ satisfies also \eqref{equi-neg}.
However, the necessity becomes unclear, and in fact, when $\mathbf{E}$ is allowed to be lower semicontinuous and extended real-valued, the necessity of closed
$\WW^{1,p}$-quasiconvexity fails, see \cite{JK}. 

These issues become particularly relevant in dealing with variational problems where the effective domain of the energy functional is contained in the set of
  matrices with finite distortion $\{ K_{A} < +\infty \}$. We denote
\begin{equation}\label{det.pos}
\R^{n\times n}_{+}\equiv \R^{n\times n}\cap \{\det >0\} ,
\end{equation}
and record that $\{ K_{A} < +\infty \} = \R^{n \times n}_{+} \cup \{ 0 \}$. Such functionals arise naturally in, for instance, elasticity 
\cite{Ball1,Ball1977,Ball1981} and in geometric function theory \cite{AIMbook}, where a common example is the classical second invariant   
\begin{equation}\label{second invariant}
\mathbf{I_{2}}(A) = K_A  + \frac{1}{K_A }, \qquad A \in \R^{2\times 2}_{+} \cup \{ 0 \},
\end{equation}
which is discussed at length in \cite[Chapt.~21]{AIMbook} and in \cite{AIMO05,MartinYao24}.
However, for applications to elasticity in particular, two central aspects of the theory in this context remain somewhat obscure.
Firstly, it is not clear how ${\bf E}(A)$ should diverge when $\det(A)$ tends to $0$, in order to have a feasible theory for the
corresponding variational integral $\mathscr{E}$.
Secondly, it seems desirable to rule out pathological deformations (for example homeomorphisms whose determinant is zero a.e or sequences of maps which
converge to such maps \cite{HK, FMO18}).  Thus it is not obvious what are the appropriate function spaces in which minimisers should be sought,
and unfortunately it appears that experiments do not help to clarify these issues partly because we are close to a regime where the elasticity theory breaks down.
Let us emphasize that variational problems on the classes of $\WW^{1,p}_{\loc}$ homeomorphisms, lead for $p \geq 2$ naturally to the so-called topologically monotone maps introduced
by Morrey, see \cite{AIMbook} and \cite{IO09,IO16}, whereas for $p<2$ a more complicated condition, the no crossing condition, introduced by De Philippis and Pratelli in
\cite{DePPra20} becomes relevant (see also \cite{CaKaRa21} for the case $p=1$).
For these reasons, and since the general structure of quasiconvex functionals is far from clear, the functionals defined on $\R^{n\times n}_{+}$ that have been successfully
used in the elasticity  literature, have mostly been polyconvex.  See for example, \cite{Ball1981,Ball2002,Ciarlet,Cartesian} and particularly \cite{Ball1,Ball1977,MuSp95,HeMC} for the
setting and success of polyconvexity.  

Similar obstacles were faced in our recent study \cite{AFGKKpreprint} of the local Burkholder functional $\mathbf{B}_{K}^{\loc}$, see \eqref{burkholder}
and \eqref{localburk} below for the definitions. This functional is real-valued on the $K$-quasiconformal well
\begin{equation}\label{quasiwell}
Q_{2}(K) \equiv  \bigl\{ A \in \M : \, K_A \leq K \bigr\}, 
\end{equation}
while outside the well it is $+\infty$, $\mathbf{B}_{K}^{\loc}(A) = +\infty$ when $A \notin Q_{2}(K)$.
Because $\B_{K}^{\loc}$ is non-positive and $p$-homogeneous on $Q_{2}(K)$ it is easy to see that the closed $\WW^{1,p}$-quasiconvexity of $\mathbf{B}_{K}^{\loc}$ is a necessary condition
for $\WW^{1,p}$-swlsc of the associated variational integral. On the other hand, the sufficiency is now non-obvious (assuming the critical case where $K=p/(p-2)$). This is exactly the
opposite situation of what happens for non-negative extended real-valued functionals. 

For later reference we finish this subsection by recalling a rather pleasant property of $\WW^{1,1}$-homeomorphisms with integrable distortion: If $f \colon \Omega \to \C$ is
a homeomorphism (onto $f( \Omega )$) of class $\WW^{1,1}_{\loc}( \Omega )$ with $K_{f} \in \LL^{1}( \Omega )$, then its inverse $f^{-1} \colon f( \Omega ) \to \Omega$ is of
class $\WW^{1,2}( f( \Omega ))$ and 
\begin{equation}\label{l1distort}
\int_{\Omega} \! K_f(z) \, \dd m(z) = \int_{f(\Omega)} \! | \D f^{-1}(w)|^2 \, \dd m(w) .
\end{equation}
See \cite{AIMbook}, \cite{HK}. 

\subsection{Principal maps} In order to address some of the issues described above we next turn to the role of Stoilow factorization in the setting of lower semicontinuity.
Here the key notion is that of a principal map, which has for a long time been implicitly used  in geometric complex analysis, for instance in
connection with planar quasiconformal mappings and the  Beltrami equations; explicitly the term is used in \cite{AIMbook}. We require a slightly more
flexible definition:

\begin{definition}\label{def:principal}
A \textit{principal map} is an orientation preserving homeomorphism  $f \colon \C \to \C$ of class $\WW^{1,1}_{\loc}(\C)$, with integrable distortion $K_f$ on the open
unit disk $\DD$, which is conformal outside the closed unit disc $\overline{\DD}$ with the Laurent expansion
$$
f(z) = b_0 z + \sum_{j=1}^{\infty}\frac{b_{j}}{z^j} \mbox{ for } |z|>1,
$$
where $|b_1| < |b_0|$.
\end{definition}
In connection with the principal map $f$ we often write
$$
A_{f}(z) \equiv b_{0}z+b_{1}\bar{z} \mbox{ for $z \in \C$, and } \phi_{f}(z) \equiv \sum_{j=2}^\infty \frac {b_j}{z^j} \mbox{ for } |z| \geq 1.
$$
Here the linear map $A_{f} \in \R^{2\times 2}$ (that we identify with its matrix) has in particular positive determinant $|b_{0}|^{2}-|b_{1}|^2$,
and the function $\phi_{f}$ extends  continuously to all of $\overline{\C}$, defining a holomorphic map on $\C\setminus \overline{\DD}$.
Since $1/z=\bar z$ for $|z|=1$, we hereby have $f(z)=A_f (z)+\phi_{f}(z)$ for $|z|=1$.
We  also record that
\begin{equation}\label{eq:centermass}
\fint_{\DD} \! \D f(z) \, \dd m(z) =  A_f.
\end{equation}
Notice that it is standard to normalize principal maps to have $b_0=1$. Here we shall refer to such maps as \textit{normalized principal maps}.
It is useful to recall that the class of all normalized $\WW^{1,2}$-principal maps forms a normal family (see \cite[Theorems 20.1.6 and  20.2.3]{AIMbook});
in fact, any $\WW^{1,2}_{\loc}$-bounded sequence of such normalized principal maps admits a subsequence that converges uniformly on $\C$ to a normalized
principal map. On the other hand, normalized principal maps with the mere $\WW^{1,1}_{\loc}$-regularity {\it do not} form a normal family; a sequence of
such Sobolev homeomorphisms can have a BV-limit (see \cite[Chapt.~21]{AIMbook}).

We also recall that via  \eqref{l1distort} the inverse of a principal map lies in $\WW^{1,2}_{\loc}( \C )$.
If the principal map itself lies in $\WW^{1,2}_{\loc}$, then the inverse has likewise an integrable distortion.

Our next goal is the definition of $\WW^{1,p}$-principal quasiconvexity, which is instrumental for the lower semicontinuity results in this paper.
It can be viewed as a refinement, adapted to the setting described above, of the notions of $\WW^{1,p}$-quasiconvexity that was introduced by
Ball and Murat in \cite{BM1} and of closed $\WW^{1,p}$-quasiconvexity that was introduced by Pedregal in \cite{Pedregal}. The condition is motivated
by the Stoilow factorization mentioned above.

For one of our key examples it is important that we allow the considered integrands to be signed, so before giving our definition of principal quasiconvexity 
let us for comparison and precision also review the corresponding definitions of $\WW^{1,p}$-quasiconvexity and closed $\WW^{1,p}$-quasiconvexity in this context.

A lower semicontinuous functional ${\bf E} \colon \M_{+} \to \R \cup \{ + \infty \}$ is said to be $\WW^{1,p}$-quasiconvex at $A \in \M_{+}$ provided
\begin{equation}\label{w1p-qc}
{\bf E}(A) \leq \fint_{\DD}^{\ast} \! {\bf E} (\D f(z)) \, \dd m(z)
\end{equation}
holds for all $f \in A+\WW^{1,p}_{0}( \DD )$\footnote{The asterix on the integral signifies that the integral is intended as an upper Lebesgue integral, meaning that when both
${\bf E}( \D f)^{+}$ and ${\bf E}( \D f)^{-}$ integrate to $+\infty$ over $\DD$, then the integral is taken to be $+\infty$. The interested reader can verify that this
is equivalent to requiring, for each $k \in \N$, the validity of the inequality \eqref{w1p-qc} when ${\bf E}$ is replaced by $\max\{ {\bf E},-k \}$.}.
If this Jensen inequality holds for all $A \in \M_{+}$, then ${\bf E}$ is said to be
$\WW^{1,p}$-quasiconvex.

As observed in \cite{BM1} this condition depends on the exponent $p$ and it refines Morrey's original definition of quasiconvexity in \cite{Morrey} that
in this terminology corresponds to $\WW^{1,\infty}$-quasiconvexity. The definition is arguably natural, but it still has some deficiencies. For instance, as
observed in \cite{BM1}, an extended real-valued integrand need not be rank-one convex.  The next definition strengthens the notion of $\WW^{1,p}$-quasiconvexity
and does always imply rank-one convexity. 

A lower semicontinuous integrand ${\bf E} \colon \M_{+} \to \R \cup \{ +\infty \}$ is closed $\WW^{1,p}$-quasiconvex at $A \in \M_{+}$ provided
\begin{equation}\label{closed-w1p-qc}
{\bf E}(A) \leq \int_{\M_{+}}^{\ast} \! {\bf E} \, \dd \nu
\end{equation}
holds for all homogeneous $\WW^{1,p}$ gradient Young measures $\nu$ with centre of mass $A$. If it holds for all $A \in \M_{+}$, then ${\bf E}$
is said to be closed $\WW^{1,p}$-quasiconvex.  We refer for the definitions and basic properties of Young measures to the monographs  \cite{MullerBook99,PedregalBook97,Rindler}.
In particular, we will use the notation and terminology of gradient Young measures as described in our paper \cite{AFGKKpreprint}.
In fact, closed quasiconvexity provides an abstract solution to a closely related lower
semicontinuity problem when the considered integrands are assumed bounded from below, see \cite{JK}. We refer to Section \ref{sec:examples} and also 
\cite{AFGKKpreprint} for further discussion of these quasiconvexity notions. The main drawback is that it is very difficult to verify whether a
$\WW^{1,p}$-quasiconvex integrand is closed quasiconvex unless we have control on the generating sequence of the gradient Young measures.  

Now we are ready for our definition of principal quasiconvexity. 

\begin{definition}\label{def:pqc}
A lower semicontinuous integrand ${\bf E} \colon \M_{+} \to \R \cup \{ +\infty \}$ is $\WW^{1,p}$-\textit{principal quasiconvex} at $A \in \M_{+}$ if the Jensen
inequality 
\begin{equation} \label{prinqc}
{\bf E}(A)\leq \fint_{\DD}^{\ast} \! {\bf E} (\D f(z)) \, \dd m(z)
\end{equation}
holds for all principal maps $f \colon \C \to \C$ of class $\WW^{1,p}_{\loc}( \C )$ with $\fint_{\DD} \! \D f(z) \, \dd m(z) = A$.
We say that ${\bf E}$ is $\WW^{1,p}$-principal quasiconvex if the Jensen inequality \eqref{prinqc} holds for all 
$A \in \M_{+}$ and for the corresponding $\WW^{1,p}_{\loc}$-principal maps $f$. 
\end{definition} 
We have stated an abstract definition depending on $p$ parallel to that of $\WW^{1,p}$-quasiconvexity and closed $\WW^{1,p}$-quasiconvexity, however our examples
all are $\WW^{1,1}$-principal quasiconvex (Notice that we are testing the Jensen inequality with gradients of homeomorphism). 
We also remark that allowing for extended real-valued integrands is a matter of
convenience, but it also means that it is easy to construct pathological examples. For example, a $\WW^{1,1}$-principal quasiconvex integrand need
not be rank-one convex: take the integrand that is zero at the two matrices $\mathrm{diag }(1,1)$, $\mathrm{diag }(1,2)$ and $+\infty$ elsewhere.
It is unclear if similar examples exist when the functional is required to be continuous.
  
Often in our lower semicontinuity theorems it is convenient that the integrands ${\bf E}$ are defined on the full matrix space $\M$ and not merely on $\M_{+}$.
We achieve this by simply declaring
\begin{equation}\label{convention}
{\bf E}(A) = +\infty \, \mbox{ on } \, \M \setminus \bigl( \M_{+} \cup \{ 0 \} \bigr)
\end{equation}
and throughout the entire paper this will be our standing assumption for the considered integrands that might not be explicitly defined beyond $\M_{+}$.
The reader will notice that we did not specify any value for ${\bf E}$ at $0 \in \M$. This is on purpose because the $0$ matrix is in this context special and
it is convenient to have an ad hoc approach here. Sometimes we extend ${\bf E}$ to $0$ by lower semicontinuity and sometimes we declare that it
is $+\infty$ or some other value there.

Next, we compare principal quasiconvexity with the classical notions of semiconvexity in the vectorial calculus of variations. Firstly,
it is immediate from Jensen's inequality  for convex functions and  \eqref{eq:centermass} that any lower semicontinuous convex functional on $\M$ restricts
to a $\WW^{1,1}$-principal quasiconvex functional on $\M_{+}$. Next, 
notice that  if  $f \in \CC^{1}( \DD ) \cap \WW^{1,\infty}(\DD)$ with $\det \D f>0$, and for some linear $A \in  \M_{+}$ we have $f \in A+\WW^{1,\infty}_{0}(\DD )$,
so that $f$ is a classical test function for the $\WW^{1,\infty}$-quasiconvexity inequality, then $f$ can be extended as principal map off the unit disc. Indeed, if
$A(z) = b_{0}z + b_{1} \bar{z}$, then the  map $f^{\rm prin}(z) \equiv f(z) \mathbf{1}_\DD+ \bigl(b_0 z+b_{1}/z \bigr)\mathbf{1}_{\C \setminus \DD}$ is principal in the sense
of Definition~\ref{def:principal}. Therefore $\WW^{1,\infty}$-principal quasiconvexity implies the standard quasiconvexity inequality for sufficiently smooth maps.
A standard argument then yields that a $\WW^{1,\infty}$-principal quasiconvex functional is locally rank-one convex on the interior of its effective domain and
therefore that it is locally Lipschitz there (see \cite{Dacorogna2007}). 

On the other hand, as we shall see in Section 2 below, the classical area formula implies that minus the determinant,
$- \det(A) $, is $\WW^{1,1}$-principal quasiconvex (although not $\WW^{1.p}$-quasiconvex for $1 \le p<2$) , while the determinant itself, $\det (A)$, is not.
This simple example shows, in particular, that polyconvexity, and hence also quasiconvexity, does not imply principal quasiconvexity in general.
For general functionals on $\M_{+}$ we present an additional local  condition, see~\eqref{sh}, which together with standard quasiconvexity
implies principal quasiconvexity, see Proposition~\ref{superharmonic}.  

In addition,  the class of boundary values of all principal maps, i.e. of all the maps $f$ for which \eqref{prinqc} applies, is much larger than
just the linear ones. In fact, if $\Omega$ is a Jordan domain in $\overline \C$ with $\infty \in \Omega$, take a conformal map
$\Psi \colon\C \setminus \overline{\DD} \to \Omega$ with $\Psi(\infty) = \infty$. Then if the boundary $\partial \Omega$ is sufficiently regular, we can extend
$\Psi$ to a principal map of  $\overline \C$.  In particular, we see that the set of boundary values of principal maps on $\partial \DD$ coincides with the set of
boundary values on $\partial \DD$ of all such Riemann maps.
\smallskip

Let us then  informally explain  why principal quasiconvexity is relevant for establishing lower semicontinuity of variational integrals. 
The proof of  Morrey \cite{Morrey} for the lower semicontinuity of functionals with standard quasiconvex integrands goes
basically  as follows. Firstly, given a weakly converging  sequence $( \psi_j )$ of Sobolev functions, one 
localises the sequence to achieve convergence to a linear map. Second, one replaces this localized sequence with one of affine boundary values,
$u_j-Az \in \WW_{0}^{1,2}(\Omega)$, via a carefully chosen cutoff. And third, one applies directly the definition of quasiconvexity to each such $u_j$.
However, such a strategy breaks down for functionals that are  defined and finite only on $\R^{2\times 2}_{+}$, simply because the
cut-off modification might give negative determinants on sets of positive measure. 

We must therefore approach the lower semicontinuity problem via other methods, for example  gradient Young measures and the related closed quasiconvexity.
However, to deal with gradient Young measures, and investigate whether quasiconvex functionals are closed quasiconvex
in practise one needs information of the generating sequence of the Young measure. It is here that the principal quasiconvexity shows its usefulness.
For instance, in \cite{AstalaFaraco02} one first shows that
weakly converging sequences of quasiregular mappings generate gradient Young measures that are supported on the quasiconformal wells $Q_2(K)$,
as defined in \eqref{quasiwell}. And second, thanks to Stoilow factorization, the properties of those Young measures can be analysed with the
help of principal maps, via the next result from  \cite{AFGKKpreprint}. After these observations,  the definition of principal quasiconvexity
seems the natural one. 

\begin{proposition}\label{thm:YMqc}
Consider a homogeneous $\WW^{1,s}$ gradient Young measure $\nu$ supported on the $K$-quasiconformal well $Q_{2}(K)$, 
where the exponent $2K/(K+1) <  s < 2K/(K-1)$.
Then there is a sequence of $K$-quasiconformal principal maps which generates in $\DD$ the Young measure $\nu$.
\end{proposition}
For mappings of bounded distortion, aka the quasiconformal maps, the classical Stoilow factorization is among the main tools in the above result.
However, looking for similar methods for mappings of finite or integrable distortion, it is the Iwaniec--\v{S}ver\'{a}k version of Stoilow factorization
that provides us the following important improvement, see \cite[Theorem 4.4]{AFGKKpreprint}.

\begin{proposition}\label{thm:intdistort}
Let $\nu$ be a homogeneous Young measure generated by $( \D \psi_j )$, where $( \psi_j )$ is a sequence of homeomorphisms that
is bounded in $\WW^{1,2}(\DD )$ and such that $\sup_{j} \int_{\DD} \! K^{q}_{\psi_j} \,\dd m(z) < +\infty$ for some $q>1$. 
Then there is a sequence of homeomorphisms $f_j\colon \C\to \C$ such that:
\begin{enumerate}
\item $f_j$ are principal maps;
\item $(f_j)$ is bounded in $\WW^{1,2}_{\loc}(\DD )$ and $( \D f_{j} )$ generates $\nu$;
\item $\psi_{j} = h_{j} \circ f_{j}$ for some conformal maps $h_{j} \colon f_{j}(\DD )\to \psi_{j}(\DD )$.
\end{enumerate}
\end{proposition}
We recently proved \cite{AFGKKtoappear} a more general version of this theorem that applies to sequences of maps $( \psi_j )$ with distortions
bounded merely in $\LL^1$. In particular we emphasize that the maps $\psi_j$ there need not necessarily be homeomorphisms.
The proof has some new ideas compared to \cite{AFGKKpreprint} and will appear elsewhere.
      
In order to apply Theorem \ref{thm:intdistort} for a functional ${\bf E}$, we need that the sequence $\bigl( {\bf E}(\D f_{j}) \bigr)$ is equi-integrable.
Thus the proofs of results concerning such general functionals, considered in Theorems \ref{thm:lscPQ} and \ref{asfinite-lsc1} with Corollary \ref{existence},
split naturally into two propositions. The first states that the growth conditions, such as given in  \eqref{eq:growthconditions} below, together
  with a higher integrability of the distortion functions, guarantee that the sequence $\bigl( {\bf E}(\D f_{j}) \bigr)$ is equi-integrable.
This rules out concentration effects. 

The second proposition, needed in Theorem \ref{thm:lscPQ},  takes care of the oscillation effects and yields lower
semicontinuity once equi-integrability is available. 
As a matter of fact, in \cite{AFGKKpreprint} a similar issue arose for the important Burkholder functional itself. Fortunately the scope of
principal quasiconvexity and Stoilow factorization goes beyond the basic oscillation effects. Indeed, our third result, 
Theorem \ref{swlscBurk}  shows that in the case of the Burkholder functional, principal quasiconvexity allows us to use a blow-up technique to prove
lower semicontinuity in the borderline case, where equi-integrability cannot be assumed.  

We next describe the results. 

\subsection{Main results}\label{results}

To illustrate the uses of principal quasiconvexity we next present three lower semicontinuity theorems, 
and start with the classical Burkholder functional $\B_p \colon \M \to \R,$ which for exponents $p \geq 2$ is given by 
\begin{equation}\label{burkholder}
\B_p (A) \equiv  \Biggl( \bigl(\frac{p}{2} - 1 \bigr) |A|^2 - \frac{p}{2}\det A \Biggr) |A|^{p-2}, \qquad A \in \M. 
\end{equation}
We note that $\B_{2}(A) = -\det(A)$, while for $p>2$,  in terms of the quasiconformal well \eqref{quasiwell},
we have $\B_p(A) \leq 0$ if and only if $A \in Q_{2}\bigl( p/(p-2) \bigr)$. 
Thus, fixing $K \equiv p/(p-2)$, or equivalently, $p = 2K/(K-1)$, one is led \cite{AFGKKpreprint} to consider 
the \textit{local Burkholder functional} defined as
\begin{equation}\label{localburk}
\mathbf{B}_{K}^{\loc}(A) \equiv \left\{
\begin{array}{ll}
  \mathbf{B}_p(A) & \mbox{ if } A \in Q_{2}(K)\\
  +\infty & \mbox{ if } A \in \M \setminus Q_{2}(K).
\end{array}
\right.
\end{equation}
For the origins of the Burkholder functional we refer to \cite{Burk1,Burk2}. Its relation to Morrey's problem about rank-one and
quasiconvexity was first observed by T.~Iwaniec and it has since been the subject of intense investigation by a number of authors,
including \cite{AIPS12, AIPS15a, BarnsteinMontgomerySmith, Guerra19,GK,Sverak91,Tade}.
See also \cite{AIMbook,Banuelos, BJ,BJV,NV,PV02} for results about the norm of the Beurling-Ahlfors operator and the Iwaniec conjecture.
We emphasize the rather unusual feature of $\B_{K}^{\loc}$ that it is nonpositive and $p$-homogeneous on its effective domain $Q_{2}(K)$.

As is well known, a planar $K$-quasiregular map lies in $\WW^{1,p}_{\loc}$  for every $p < p_{K} \equiv 2K/(K-1)$,
while in general the integrability of the derivative of such a map fails at the borderline exponent $p_K$. Instead, in general its derivative
is locally in the Marcinkiewicz space $\LL^{p_{K},\infty}$ (see \cite{Astala94}).
On the other hand, an interesting result of \cite{AIPS12} states that for $K$-quasiregular maps $f \colon \Omega \to \C$ the function $\B_{p_K}( \D f)$
is locally integrable on $\Omega$, that is,
$$
0 \geq \int_{C} \! \B_{p_K}( \D f) \, \dd m(z) > -\infty
$$
holds for compact subsets $C$ of $\Omega$. However, even when $K$-quasiregular maps satisfy $f_j \rightharpoonup f$ weakly in $\WW^{1,p_K}$ it still is
not clear whether $\bigl( \B_{p_K}( \D f_j) \bigr)$ is locally equi-integrable. Thus the closed quasiconvexity of $\B_{K}^{\loc}$, proven
in  \cite[Theorem 1.4]{AFGKKpreprint}, led there to results that correspond to the $\WW^{1,q}$-swlsc of $\mathbf{B}_{K}^{\loc}$ only for $q> p_K$.
 
In this note we in particular extend, using principal quasiconvexity and Stoilow factorization, the $\WW^{1,q}$-swlsc of
the local Burkholder functional up to the borderline case $q = p_K$. This is a consequence of a stronger and in a sense more natural
result. To explain why our result is natural we fix a bounded domain $\Omega$ of $\C$ and a $K$-quasiregular map $g \colon \C \to \C$,
where $K>1$. Let $p=p_{K}$ be the corresponding borderline exponent and consider the variational problem of minimizing the Burkholder energy
$$
\mathcal{B}[v] \equiv \int_{\Omega} \! \B_{p}( \D v) \, \dd m(z)
$$
over all $K$-quasiregular maps $v \in g+\WW^{1,2}_{0}( \Omega )$. First one notes that from \cite{AIPS12} (see also Proposition \ref{prop:l1bound}
below for a more precise lower bound) it follows
that the Burkholder energy $\mathcal{B}[v]$ is bounded below over $v \in g+\WW^{1,2}_{0}( \Omega )$.
Next, let $(u_{j}) \subset  g+\WW^{1,2}_{0}( \Omega )$ be a minimizing sequence, so that in particular each $u_{j}\colon \Omega \to \C$ is $K$-quasiregular and,
in view of the Dirichlet condition, the sequence $( u_{j})$ is bounded in $\WW^{1,2}( \Omega )$.
By a normality result for $K$-quasiregular maps \cite{AIMbook}, $( u_{j})$ admits
a subsequence $( u_{j_k})$ that converges uniformly to a $K$-quasiregular map $u \in g+\WW^{1,2}_{0}( \Omega )$. In order to conclude that
$u$ is a minimizer one therefore needs lower semicontinuity of $\mathcal{B}[v]$ along the uniformly convergent subsequence $( u_{j_k})$.
Apart from the \textit{technical assumption} on the limit map $u$ the following result yields exactly that.

\begin{theorem}\label{swlscBurk}
Let $p > 2$, $K \equiv p/(p-2)$, $\Omega$ be a domain in $\C$ and suppose $u_j$, $u \colon \Omega \to \C$ are $K$-quasiregular maps
such that $u_{j} \to u$ locally uniformly on $\Omega$.
If $u \in \WW^{1,p}_{\loc}( \Omega )$, then
\begin{equation}\label{eq:swlscBurk}
  \int_{\Omega^\prime} \! \B_{p}( \D u ) \, \dd m(z) \leq \liminf_{j \to \infty} \int_{\Omega^\prime} \! \B_{p}( \D u_j ) \, \dd m(z)
\end{equation}
holds for each subset $\Omega^\prime \Subset \Omega$ with $\Leb^{2}(\partial \Omega^{\prime})=0$.
\end{theorem}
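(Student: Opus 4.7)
The approach combines principal quasiconvexity of $\B_K$ (a central theme of the paper) with a blow-up argument at Lebesgue points, using the Stoilow factorization to convert blow-ups of $K$-quasiregular maps into principal maps, which is the natural class on which principal quasiconvexity applies. The technical hypothesis $u \in \WW^{1,p}_{\loc}$ makes the target integral finite and serves to exclude concentration of Burkholder energy on $\Leb^2$-null sets.

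First, after extracting a subsequence, assume $\B_p(\D u_j)\,\dd m \weakstar \mu$ as Radon measures on $\Omega$; the uniform Burkholder lower bound for $K$-quasiregular maps from \cite{AIPS12} guarantees that $\mu$ is a locally finite, non-positive measure. Decompose $\mu = g\,\Leb^2 + \mu_s$ and refine the subsequence so that $|\mu|(\p\Omega')=0$, giving $\mu(\Omega')=\lim_j\int_{\Omega'}\B_p(\D u_j)\,\dd m$. The inequality \eqref{eq:swlscBurk} then reduces to showing that $g(x)\geq \B_p(\D u(x))$ for a.e.\ $x \in \Omega'$, and that the singular part $\mu_s$ vanishes on $\Omega'$.

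For the pointwise inequality, fix a Lebesgue point $x_0 \in \Omega'$ of $g$ at which $u$ is approximately differentiable with $A := \D u(x_0) \in Q_2(K)$. The rescalings $v_{j,r}(z) := r^{-1}(u_j(x_0+rz) - u_j(x_0))$ on $\DD$ are $K$-quasiregular, with $v_{j,r} \to A z$ uniformly along an appropriate diagonal $(j(r),r)$. Apply the Iwaniec--\v{S}ver\'ak Stoilow factorization $v_{j,r} = h_{j,r} \circ F_{j,r}$ with $h_{j,r}$ holomorphic on $F_{j,r}(\DD)$; by solving the Beltrami equation with zero dilatation outside $\DD$, we may take each $F_{j,r}$ to be a $K$-quasiconformal principal map of $\C$. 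The normal family property of normalized principal $K$-quasiconformal maps (recorded after Definition~\ref{def:principal}) yields, after normalization and extraction, $F_{j(r),r} \to F_*$ uniformly on $\C$, where $F_*$ is the principal extension of $A z|_{\DD}$ and so satisfies $A_{F_*}=A$; uniqueness of the Stoilow factorization forces $h_{j(r),r} \to \id$ locally uniformly on $F_*(\DD)$. The composition identity $\B_p(\D(h \circ f)) = |h'(f)|^p \B_p(\D f)$ for holomorphic $h$ (an immediate consequence of $|\D(h \circ f)| = |h'(f)||\D f|$ and $\det \D(h\circ f) = |h'(f)|^2 \det \D f$) rewrites
\[
\int_\DD \B_p(\D v_{j,r})\,\dd m = \int_\DD |h_{j,r}'(F_{j,r})|^p\,\B_p(\D F_{j,r})\,\dd m,
\]
and principal quasiconvexity of $\B_K$ applied to the principal maps $F_{j(r),r}$ gives $\int_\DD \B_p(\D F_{j(r),r})\,\dd m \geq \pi\,\B_p(A_{F_{j(r),r}})$. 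Combining the locally uniform convergence $|h_{j(r),r}'|^p \to 1$ with $A_{F_{j(r),r}} \to A$, and exhausting $\DD$ by compactly contained sub-disks to control the contribution near $\p\DD$, one passes to the limit to conclude $g(x_0)\geq \B_p(A)$.

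The chief obstacle is that at the borderline $p = p_K$ the sequence $|\D v_{j,r}|^p$ is not equi-integrable, making both the limiting procedure in the weighted integral above and the exclusion of $\mu_s$ delicate. The exclusion of the singular part $\mu_s$ relies on the same Stoilow--principal factorization machinery: concentration of $|\B_p(\D u_j)|$ on an $\Leb^2$-null set would force the associated principal factorizations of the blow-ups to violate the compactness of the normal family of $K$-quasiconformal principal maps, and this is precluded precisely by the $\WW^{1,p}_{\loc}$ hypothesis on the limit $u$.
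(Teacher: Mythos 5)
Your first step, establishing $\tfrac{\dd\mu}{\dd\Leb^2}(x_0)\geq\B_p(\D u(x_0))$ at points of differentiability via blow-up, Stoilow factorization, and the Burkholder area inequality (i.e.\ principal quasiconvexity applied to the principal factors), coincides with the paper's Step~1 and is sound. One small care point: the normal family statement is for \emph{normalized} principal maps ($b_0=1$), so the scalar $u_z(x_0)$ should be carried by the holomorphic piece rather than absorbed wholesale into the principal one; this is a matter of bookkeeping.

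The genuine gap is in your treatment of the singular part, and the mechanism you describe is not the right one. You assert that concentration on a null set would ``violate the compactness of the normal family of $K$-quasiconformal principal maps.'' Compactness of the normal family is not violated here; it is \emph{used}. What $u\in\WW^{1,p}_{\loc}$ actually buys is the mutual singularity of $|\D u|^p\Leb^2$ and $\lambda^s$, so that at $\lambda^s$-a.e.\ $z_0$ one has simultaneously $\lambda(\DD_r(z_0))/\Leb^2(\DD_r(z_0))\to\infty$ and $\tfrac{1}{\lambda(\DD_r(z_0))}\int_{\DD_r(z_0)}|\D u|^p\,\dd m\to 0$. One then rescales not by $r$ but by $r\rho_r$ with $\rho_r=\bigl(\lambda(\DD_r(z_0))/\Leb^2(\DD_r(z_0))\bigr)^{1/p}$. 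Combining the quasiregularity bound $|\D u_j|^2\le K\det\D u_j$ with H\"older and the second condition above, the rescaled maps $v_j$ converge \emph{strongly} to $0$ in $\WW^{1,2}(\DD)$ along a suitable diagonal, while a measure-theoretic lemma (that $\limsup_{r\searrow 0}\lambda(\DD_{tr}(z_0))/\lambda(\DD_r(z_0))\ge t^2$, else the density blow-up fails) pins $\lim_j\int_{\DD_s}\B_p(\D v_j)\,\dd m\le-\pi s^2$. Stoilow factorizing $v_j=h_j\circ f_j$, the normal family gives $f_j\to f$ principal; since $v_j\to 0$ one gets $h_j\to 0$ locally uniformly, hence $|h_j'\circ f_j|\to 0$ uniformly on $\DD_s$; and then the Burkholder area inequality $\int_\DD\B_p(\D f_j)\ge-\pi$ forces $\int_{\DD_s}\B_p(\D v_j)=\int_{\DD_s}|h_j'\circ f_j|^p\B_p(\D f_j)\to 0$, contradicting $\le-\pi s^2$. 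With the naive $r^{-1}$ rescaling the holomorphic factor $h_j$ simply diverges, no contradiction arises from the factorization, and the argument you gesture at does not close.
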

Since any locally uniform limit $u$ of a sequence $(u_j)$ of $K$-quasiregular maps is itself $K$-quasiregular, we see from  \cite{Astala94} that
the limit function $u$ in Theorem \ref{swlscBurk} automatically has the Marcinkiewicz-regularity $\D u \in \LL^{p,\infty}_{\loc}$. It remains open if  merely this   
suffices in Theorem \ref{swlscBurk}.

We refer to  Corollary \ref{betterBurk} for a global version of the Burkholder lower semicontinuity result on Dirichlet classes.
Since we do not know if the sequence $\bigl( \mathbf{B}_p(\D u_j) \bigr)$ in Theorem \ref{swlscBurk} is locally equi-integrable,
we need to handle the concentration and oscillation phenomena simultaneously. In this setting we will actually prove that the singular
part of the reduced defect measure for the sequence $\bigl( \B_{p}( \D u_{j}) \bigr)$ is zero.
\smallskip

We next consider lower semicontinuity results for general principal quasiconvex functionals, which are consistent with the blow up condition
\begin{equation}
\label{eq:blowup}
{ \bf E}(A) \textrm{ diverges as } \det A \to 0^+.
\end{equation}
For simplicity we restrict attention here to energy densities satisfying the following growth conditions: There exist $C>0$ and $p \in [1,2)$
such that
\begin{equation}\label{eq:growthconditions}
\left\{
\begin{array}{l}
  \bigl| {\bf E}(A)\bigr| \leq C \max \bigl\{|A|^{p}, -\log(\det A), K_A \bigr\}+C \, \mbox{ on } \, \M_{+},\\
{}\\
  {\bf E}(A) = +\infty \, \mbox{ on } \, \M \setminus \bigl( \M_{+} \cup \{ 0 \} \bigr) .
\end{array}
\right.
\end{equation}
Note we do not impose any condition for the value of ${\bf E}$ at $0$ and that the functional ${\bf W}$ defined at \eqref{wmagic} satisfies
\eqref{eq:growthconditions} with $p=1$.
With these bounds we have:

\begin{theorem}\label{thm:lscPQ}
Let ${\bf E} \colon \M \to \R \cup \{ +\infty \}$ be Borel measurable and satisfy \eqref{eq:growthconditions}. Assume that ${\bf E}$ is
$\WW^{1,1}$ principal quasiconvex. Fix an exponent $q>\frac{p}{2-p}$ and let $g \colon \C \to \C$ be a homeomorphism
of class $\WW^{1,2}_{\loc}( \C )$ with distortion $K_{g} \in \LL^{q}_{\loc}(\C )$.

Let $(u_j)$ be a sequence of homeomorphisms in $g+\WW^{1,2}_{0}(\Omega)$ with $\sup_j \|K_{u_j}\|_{\LL^q(\Omega)}<\infty$ and suppose that $u_j\weak u$
in $\WW^{1,2}( \Omega )$. Then $u \in g+\WW^{1,2}_{0}( \Omega )$ is a homeomorphism with $K_{u} \in \LL^{q}( \Omega )$ and
$$
\liminf_{j\to \infty} \int_\Omega {\bf E}(\D u_j(z)) \, \dd m(z) \geq \int_\Omega {\bf E} (\D u(z))\, \dd m(z).
$$
\end{theorem}
This result yields in a standard way an existence result for a variational problem related to hyperelasticity:

\begin{corollary}\label{existence}
Let ${\bf E} \colon \M \to \R \cup \{ +\infty \}$ be Borel measurable, satisfy \eqref{eq:growthconditions} and be $\WW^{1,1}$-principal quasiconvex.
Suppose ${\bf P} \colon \M \to \R \cup \{ +\infty \}$ is lower semicontinuous, polyconvex and satisfies the coercivity condition
\begin{equation}\label{coerciv}
{\bf P}(A) \geq c\bigl( |A|^{2}+K_{A}^{q} \bigr)
\end{equation}
for all $A \in \M$, where $c>0$, $q > \frac{p}{2-p}$ are constants. For a homeomorphism $g \colon \C \to \C$ of class $\WW^{1,2}_{\loc}( \C )$
and with ${\bf E}( \D g) + {\bf P}( \D g) \in \LL^{1}( \Omega )$ we put
$$
\mathcal{A}_{g} \equiv \biggl\{ u \in g+\WW^{1,2}_{0}( \Omega ) : \, u \colon \Omega \to g( \Omega ) \,
\mbox{ homeomorphism } \biggr\} .
$$
Then, for each $F \in \WW^{-1,2}( \Omega ) \equiv \WW^{1,2}_{0}( \Omega )^{\ast}$, the variational problem
$$
\inf_{u \in \mathcal{A}_{g}} \left( \int_{\Omega} \! \bigl( {\bf E}( \D u) + {\bf P}( \D u) \bigr) \, \dd m(z) + \bigl\langle F,u \bigr\rangle \right)
$$
admits a minimizer $u \in \mathcal{A}_{g}$ with $K_u \in \LL^{q}( \Omega )$.
\end{corollary}
Our last result provides lower semicontinuity for principal quasiconvex functionals along sequences of maps that
have asymptotically finite distortion in the following sense.

\begin{definition}
Let $( u_{j} )$ be a sequence in $\WW^{1,2}_{\loc}( \Omega )$ and $\A$ be a measurable subset of $\Omega$. We say   
that the maps have  \textit{asymptotically finite distortion} on $\A$ if
\begin{equation}\label{asfindist1}
\limsup_{j \to \infty} \mathbf{K}( \D u_{j}(x)) < +\infty \quad \mbox{ for a.e.} \; x \in \A.
\end{equation}
\end{definition}
We emphasize that the mappings do not necessarily have finite distortion everywhere, but that the distortion condition is merely required
to hold on the measurable set $\A$.

\begin{theorem}\label{asfinite-lsc1}
Let $\mathbf{E} \colon  \R^{2 \times 2} \to \R \cup \{ +\infty \}$ be lower semicontinuous and $\WW^{1,2}$-principal quasiconvex,
with ${\bf E} = +\infty$ on $\M \setminus \bigl( \M_{+} \cup \{ 0 \} \bigr)$, 
and assume that  
\begin{equation}\label{mysterious1}
\bigl| \mathbf{E}(A) \bigr| \leq \mathbf{C}(K_{A})\bigl( |A|^{2}+1 \bigr), \quad \forall A \in \M_{+} ,
\end{equation}
where $\mathbf{C} \colon [1,\infty ) \to [1,\infty )$ is an increasing function.
Let $\A$ be a measurable subset of $\Omega$.

Then, if $\,( u_{j})\, \subset \WW^{1,2}_{\loc}( \Omega )$ has asymptotically finite distortion on $\A$, if the sequence converges weakly to $u$
in $\WW^{1,2}_{\loc}( \Omega )$ and if 
\begin{equation}\label{equiint-energy1}
\bigl( \mathbf{E}(\D u_{j})^{-} \bigr) \, \mbox{ is equi-integrable on } \A ,
\end{equation}
then we have
$$
\liminf_{j \to \infty} \int_{\A} \! \mathbf{E}(  \D u_{j}) \, \dd m(z) \geq \int_{\A} \! \mathbf{E}( \D u) \, \dd m(z).
$$
\end{theorem}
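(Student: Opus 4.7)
After passing to a subsequence, $(\D u_j)$ generates a gradient Young measure $\nu=(\nu_x)_{x\in\Omega}$. The lower semicontinuity of $\mathbf{E}$ on $\R^{2\times 2}_+\cup\{0\}$, the hypothesis~\eqref{equiint-energy1}, and the classical lsc theorem for Young measures yield
$$
\liminf_{j\to\infty}\int_\A\mathbf{E}(\D u_j)\,\dd m\geq \int_\A\langle\nu_x,\mathbf{E}\rangle\,\dd m(x),
$$
so it suffices to prove the pointwise Jensen inequality $\mathbf{E}(\D u(x))\leq\langle\nu_x,\mathbf{E}\rangle$ for a.e.~$x\in\A$. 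The plan is to verify this Jensen inequality at each such $x$ by invoking Theorem~\ref{thm:YMqc} together with $\WW^{1,2}$-principal quasiconvexity, after a localization that replaces the asymptotic pointwise distortion hypothesis~\eqref{asfindist} by a genuine uniform bound.

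Set $K_n^\ast(x)\equiv\sup_{j\geq n}K_{\D u_j}(x)$; since $(K_n^\ast)$ decreases pointwise a.e.~on $\A$ to the finite function $K$, Egoroff's theorem supplies a decomposition $\A=N\cup\bigcup_{k\geq 1}\A_k$ with $|N|=0$, on each $\A_k$ admitting constants $L_k<\infty$ and $j_k\in\mathbb{N}$ such that $K_{\D u_j}(x)\leq L_k$ for all $j\geq j_k$ and $x\in\A_k$. Discarding finitely many terms does not affect $\nu$, so at a Lebesgue density point $x_0$ of $\A_k$ a standard blow-up argument identifies $\nu_{x_0}$ as a homogeneous gradient Young measure in $\mathscr{M}^2_{\qc}(Q_2(L_k))$ with barycenter $\D u(x_0)\in Q_2(L_k)$; the closedness of $Q_2(L_k)$ in $\M$ is crucial here. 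Moreover, the null-Lagrangian identity $\det(\D u(x))=\langle\nu_x,\det\rangle$ for gradient Young measures confirms $\D u(x)\in\R^{2\times 2}_+\cup\{0\}$ on $\A$.

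Since $s=2$ lies in $\bigl(\tfrac{2L_k}{L_k+1},\tfrac{2L_k}{L_k-1}\bigr)$ for any $L_k>1$, Theorem~\ref{thm:YMqc} produces a sequence $(f_j)$ of $L_k$-quasiconformal principal maps in $\WW^{1,2}(\DD)$ that generates $\nu_{x_0}$ on $\DD$, with $A_{f_j}\to \D u(x_0)$. The $\WW^{1,2}$-principal quasiconvexity of $\mathbf{E}$ applied to each $f_j$ gives
$$
\mathbf{E}(A_{f_j})\leq \fint_\DD\mathbf{E}(\D f_j)\,\dd m.
$$
The growth bound~\eqref{mysterious1} combined with $K_{\D f_j}\leq L_k$ yields $|\mathbf{E}(\D f_j)|\leq \mathbf{C}(L_k)(|\D f_j|^2+1)$, while Astala's area-distortion theorem bounds $(\D f_j)$ uniformly in $\LL^{2+\delta}(\DD)$ for some $\delta>0$; consequently $(\mathbf{E}(\D f_j))$ is equi-integrable on $\DD$. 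Combining this equi-integrability with the lower semicontinuity of $\mathbf{E}$ at $\D u(x_0)$ and passing to the limit $j\to\infty$ in the principal quasiconvexity inequality yields $\mathbf{E}(\D u(x_0))\leq\langle\nu_{x_0},\mathbf{E}\rangle$, as required.

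The main obstacle is precisely this final passage to the limit: the fundamental theorem of Young measures ensures exact convergence $\fint_\DD\mathbf{E}(\D f_j)\,\dd m\to \langle\nu_{x_0},\mathbf{E}\rangle$ only for continuous integrands, whereas $\mathbf{E}$ is merely Borel and lower semicontinuous. Bridging this gap demands either approximating $\mathbf{E}$ from below by a monotone sequence of continuous principal quasiconvex integrands and then applying monotone convergence, or exploiting the constructive freedom in the proof of Theorem~\ref{thm:YMqc} to select the generating principal maps so that $\fint_\DD\mathbf{E}(\D f_j)\,\dd m$ realizes $\langle\nu_{x_0},\mathbf{E}\rangle$ exactly in the limit; a secondary technical point is the blow-up justification of $\mathrm{supp}(\nu_{x_0})\subset Q_2(L_k)$, which rests on the density-point property of $x_0\in\A_k$ combined with the uniform distortion bound available on $\A_k$.
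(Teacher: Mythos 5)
Your overall strategy matches the paper's: pass to a Young measure, reduce via the Balder lower semicontinuity theorem to a pointwise Jensen inequality, localize to obtain uniform distortion bounds and identify the homogeneous measure $\nu_{x_0}$ as a member of $\mathscr{M}^2_{\qc}(Q_2(L))$, invoke the Stoilow-factorization-based generation of $\nu_{x_0}$ by principal maps, and conclude via $\WW^{1,2}$-principal quasiconvexity and the equi-integrability supplied by \eqref{mysterious1}. Your Egoroff-based decomposition of $\A$ is a workable variant of the paper's Lusin-based one in Proposition~\ref{prop:localization}. However, the gap you flag at the end is genuine, and the two fixes you sketch are not what the paper does and are not obviously available: approximating a Borel lsc principal quasiconvex $\mathbf{E}$ from below by a monotone sequence of \emph{continuous principal quasiconvex} integrands is far from clear (the relevant sup-envelope of continuous minorants will generally not inherit the quasiconvexity), and ``exploiting constructive freedom'' in the generating sequence does not address the fact that $\langle\nu_{x_0},\mathbf{E}\rangle$ need not be attainable as a limit of averages when $\mathbf{E}$ is discontinuous on a $\nu_{x_0}$-positive set.

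The paper closes the gap with two observations you do not use. First, since $\mathbf{E}$ is real-valued and principal quasiconvex on $\M_{+}$, it is in particular rank-one convex there, and rank-one convex finite functions are locally Lipschitz; hence $\mathbf{E}$ is automatically continuous on $\M_{+}\setminus\{0\}$, and the only possible point of discontinuity within the support of $\nu_{x_0}\subset Q_2(K(x_0))\subset\M_{+}\cup\{0\}$ is the origin. Second, the $0$--$1$ law from \cite{AstalaFaraco02} for homogeneous gradient Young measures supported on a quasiconformal well states that either $\nu_{x_0}(\{0\})=0$ or $\nu_{x_0}=\delta_0$. In the first case $\mathbf{E}$ is continuous $\nu_{x_0}$-a.e., so the fundamental theorem of Young measures (now applicable) and equi-integrability give $\fint_\DD\mathbf{E}(\D f_j)\to\langle\nu_{x_0},\mathbf{E}\rangle$, and principal quasiconvexity together with $A_{f_j}\to\D u(x_0)$ and lower semicontinuity of $\mathbf{E}$ at $\D u(x_0)$ finishes the Jensen inequality. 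In the second case $\D u(x_0)=0$ and lower semicontinuity of $\mathbf{E}$ at $0$, combined with equi-integrability and Balder again, gives the inequality directly. Without the rank-one convexity observation and the $0$--$1$ dichotomy, your argument does not go through as written.
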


\begin{remark}
Notice that we have not imposed any condition on the value of ${\bf E}$ at $0$, so any value for ${\bf E}(0)$
that is compatible with lower semicontinuity of ${\bf E}$ is permitted in Theorem \ref{asfinite-lsc1}.
\end{remark}

In this  introduction  we have stated the homogeneous versions of the lower semicontinuity estimates, but in the bulk of the text we will deal
with general non-autonomous $z$-dependent versions of the integrands. Although we have stated the results for planar maps, the reader
interested in applications could think of higher dimensional plane strains, which often can be modeled by two-dimensional deformations
\cite{BallJames91, OgdenBook, ORGDB20}. For example this could be relevant for isochoric Mooney-Rivlin materials altered in a
quasiconvex but non-polyconvex way; see \cite{BGO15, SN03} and the references therein.
We remark that our results in particular yield semicontinuity results for strain energies that decompose as a sum of a polyconvex energy density and
a principally quasiconvex energy density.
This is particularly relevant for a class of rank-one convex functionals with an additive isochoric-volumetric split, see \cite{Voss0,Voss1}
and \cite[Sect.~11]{AFGKKpreprint}.
We also remark that under natural coercivity conditions in terms of either the distortion or the second invariant one obtains existence of minimizers
subject to, for instance, Dirichlet boundary conditions. See \cite[Sect.~12]{AFGKKpreprint}.

{Before we embark on the proofs, we discuss and present examples of principal quasiconvex functionals in section~\ref{sec:examples}.
We start by recalling our earlier work on the Burkholder functional and isotropic functionals that admit a volumetric isochoric split.
Additionally, we show that for any functional depending only on distortion, principal quasiconvexity is equivalent to rank-one convexity and to polyconvexity.
We then investigate the natural question of which additional conditions, on top of quasiconvexity, imply principal quasiconvexity.
Here, Proposition~\ref{superharmonic} is the main technical result. The proof originates from \cite{AFGKKpreprint}, utilizing the theory of quadrature domains,
but in this work we deal more efficiently with possible concentration effects. This part combines the approximation result  \cite{HP18,Campbell17} together with some
careful manipulations with Orlicz functions in order to  transfer equiintegrability from $K_f$ to $\log(1+\frac{1}{J_f})$, in analogy with the $\LL^p$
estimates of Koskela and Onninen. More precisely, if ${\bf E}$ is $\WW^{1,\infty}$-quasiconvex, satisfies a natural $\LL^1$ type growth condition, see \eqref{supgrow}
in Section 2, and if for each $A \in \M_{+}$ the function
\begin{equation}\label{concavesintro}
\mathbb{C} \setminus \{ 0 \} \ni w \mapsto \mathbf{E}(wA) 
\end{equation}
is superharmonic, then ${\bf E}$ is also $\WW^{1,1}$-principal quasiconvex. 
For isotropic functionals the condition \eqref{concavesintro} simplifies further to an ordinary differential inequality, see Lemma~\ref{sh-iso}.
We end the examples section with a brief discussion of the so-called complex Burkholder functionals introduced in \cite{AIPS15b}. It is interesting to note
that these functionals have a different symmetry group, but still the arguments persist in this wider context.

In section~\ref{sec:Proofs} we describe the proofs for the semicontinuity and existence results. The proofs of
Theorems~\ref{thm:lscPQ} and \ref{asfinite-lsc1} follow a well-known strategy. First we use Young measures for the localization, we then deal with the
homogeneous gradient Young measures via Stoilow factorization, and finally we apply principal quasiconvexity. The localization for the case of asymptotic
finite distortion is however rather delicate, and involves a number of technical steps and results, where for instance the 0-1 law from \cite{AstalaFaraco02}
is used.  The proof of Theorem~\ref{swlscBurk} is more subtle as we cannot rely on gradient Young measure theory.
Instead we analyze the limit measure of the $\LL^1$-bounded sequence $\B_{p}( \D u_j)$ and localize differently at points where the limit measure is
absolutely continuous with respect to Lebesgue measure,  and at points where it is  singular. This blow up technique adjusts very well to principal quasiconvexity, though to treat
the singular part we need to assume critical integrability for the limit map (not for the sequence though).  Finally, we gather all results and prove in this
section the results of existence of minimizers. The reader not interested in proofs can directly check our local conditions and apply the existence theorem. 

\subsection*{Acknowledgments}
D.F, K.A, A.K acknowledge the financial support of QUAMAP, the ERC Advanced Grant 834728, 
 and of  
the Severo Ochoa Programme 
CEX2019-000904-S.
A.G. was supported by Dr.\ Max R\"ossler, the Walter Haefner Foundation and the ETH Z\"urich Foundation.  
D.F and A.K were  partially supported by CM and UAM,
and A.K by Academy of Finland CoE Randomness and Structures, and Academy Fellowship Grant 355840. 
 D.F acknowledges financial support by PI2021-124-195NB-C32. 
  D.F also wants to acknowledge the hospitality of the Mathematical Institute of Oxford during the summer of 2023.

\section{Examples of principal quasiconvex functionals}\label{sec:examples}

In this section we discuss and give examples of functionals satisfying the principal quasiconvexity condition of Definition \ref{def:pqc}.
Some of the examples are better understood using complex notation for  matrices $A \in \M$. Recall that this amounts to identifying $A$ with
$(a_{+},a_{-}) \in \mathbb{C}^2$, where, in terms of the usual identification of $\omega = ( \omega_{1},\omega_{2}) \in \mathbb{R}^2$ with
$\omega = \omega_{1}+\mathrm{i}\omega_{2} \in \C$,
\begin{equation}\label{eq:complexnotation}
A \omega=a_+  \omega+a_ - \overline{\, \omega \,}.
\end{equation}
Accordingly we get with $A^{+} \equiv (A + \mathrm{cof }A)/2$, $A^{-} \equiv (A - \mathrm{cof }A)/2$ that $A^{+} = (a_{+},0)$, $A^{-}=(0,a_{-})$
and this is the reason for calling $(a_{+},a_{-})$ the conformal-anticonformal coordinates of $A$. In this connection, we also record
that $( \rho \mathrm{e}^{\mathrm{i}\theta},0) = \rho R_{\theta}$, where $R_\theta \in \mathrm{SO}(2)$ represents the (anti-clockwise) $\theta$-angle rotation about
the origin. Similarly, the complex conjugate mapping, $\omega \mapsto \overline{\omega}$, is identified with the reflection $\mathrm{diag }(1,-1) = (0,1)$. 

For later use, we also recall that for $A \in \M$ we have the identities: $|A| = |a_{+}| + |a_{-}|$, $J_{A} = |a_{+}|^{2}-|a_{-}|^{2}$. When $A \in \M_{+}$ we
have $K_{A} = (|a_{+}|+|a_{-}|)/(|a_{+}|-|a_{-}|) = (1+| \mu_{A}|)/(1-| \mu_{A}|)$, where the ratio $\mu_{A} \equiv a_{-}/a_{+}$ is called the complex dilation of $A$.

A functional ${\bf E} \colon \M_{+} \to \R \cup \{ +\infty \}$ is isotropic provided ${\bf E}(QAR) = {\bf E}(A)$ holds for all $A \in \M_{+}$
and $Q$, $R \in \mathrm{SO}(2)$. There are a number of equivalent ways to characterize isotropy and here we emphasize that both $A \mapsto J_A$
and $A \mapsto K_A$ are isotropic. These two functionals can be considered the building blocks for all isotropic functionals in the sense that one can
show ${\bf E}$ to be isotropic precisely when ${\bf E}(A) = E(K_{A},J_{A})$ holds for all $A \in \M_{+}$ for some extended real-valued function
$E \colon (0,\infty )^{2} \to \R \cup \{ +\infty \}$.
Below we shall be considering subclasses of the isotropic functionals that are naturally expressed in such terms.
Finally, it will be convenient on a few occasions to use the short-hand $zA$ for the matrix $\rho R_\theta A = (z,0)(a_{+},a_{-})= (za_{+}, za_{-})$, where
$z = \rho \mathrm{e}^{\mathrm{i}\theta}$. For later reference we also record the general multiplication rule for matrices $A=(a_{+},a_{-})$ and $B = (b_{+},b_{-})$
in complex notation
\begin{equation}\label{cmultiply}
AB = (a_{+},a_{-})(b_{+},b_{-}) = \bigl( a_{+}b_{+} + a_{-}\overline{b_{-}}, a_{+}b_{-}+a_{-}\overline{b_{+}} \bigr) .
\end{equation}
A first simple class of examples is furnished by the following result:

\begin{proposition}\label{polycon}
Assume $P \colon \M \times \R \to \R \cup \{ +\infty \}$ is an extended real-valued, lower semicontinuous and convex function for which the partial function
$t \mapsto P(A,t)$ is non-increasing on $[0,|A|^{2}]$ for each fixed $A \in \M$.  Then the functional $\mathbf{P}(A) \equiv P(A,J_{A})$, $A \in \M_{+}$, is
$\WW^{1,1}$-principal quasiconvex.
\end{proposition}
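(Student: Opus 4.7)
The plan is to reduce the desired Jensen-type inequality to a straightforward application of the usual Jensen inequality for the convex function $\Phi$, combined with two geometric facts about principal maps: the mean-value property of the gradient and the classical area-theorem bound on the Jacobian.

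First, I would record the two ingredients. Fix a principal map $f$ of class $\WW^{1,2}_{\loc}(\C)$. The excerpt notes immediately after Definition~\ref{def:principal} that
$$
\fint_{\DD} \D f(z) \, \dd m(z) = A_f,
$$
and since $\D f \in \LL^{2}(\DD)$ we also have $\det \D f \in \LL^{1}(\DD)$, so Jensen's inequality will be applicable. The second ingredient is the bound
$$
\fint_{\DD} \det \D f(z) \, \dd m(z) \leq \det A_f,
$$
which is the principal quasiconvexity of $-\det$ already mentioned in the paragraph after Definition~\ref{def:pqc}. I would justify it by the area theorem applied to the normalized conformal map $z \mapsto f(z)/b_0$ on $\C \setminus \overline{\DD}$, whose Laurent expansion starts with $z + (b_1/b_0) z^{-1}+\dots$; this gives $|f(\DD)| \leq \pi(|b_0|^2 - |b_1|^2) = \pi\det A_f$. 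Combined with the area formula $\int_{\DD} \det \D f\, \dd m = |f(\DD)|$, which holds because $f$ is a $\WW^{1,2}$-homeomorphism of finite distortion, this yields the inequality.

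With these in hand, the conclusion falls out of a two-step chain. Apply Jensen's inequality to the convex function $\Phi$ and the $\M\times\R$-valued integrable map $z\mapsto(\D f(z),\det \D f(z))$ on $\DD$, using the mean-value identity for the first coordinate:
$$
\Phi\!\left(A_f,\ \fint_{\DD} \det \D f \, \dd m\right) \leq \fint_{\DD} \Phi(\D f,\det \D f)\, \dd m = \fint_{\DD} \mathbf{F}(\D f)\, \dd m.
$$
Then exploit the monotonicity hypothesis: since $t\mapsto\Phi(A_f,t)$ is non-increasing and $\fint_{\DD} \det \D f\, \dd m \leq \det A_f$, one has $\Phi(A_f,\det A_f)\leq \Phi\!\left(A_f,\fint_{\DD} \det \D f\, \dd m\right)$, which closes the chain to give $\mathbf{F}(A_f)\leq \fint_{\DD}\mathbf{F}(\D f)\, \dd m$.

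I do not foresee a real obstacle; the only delicate point is allowing $\Phi$ to take the value $+\infty$ in Jensen's inequality, but this is standard for (jointly) convex integrands, and if the right-hand side is infinite the inequality is anyway trivial. The argument is entirely structural, with the area theorem playing the role that translates the geometry of principal maps into the analytic inequality needed for Jensen.
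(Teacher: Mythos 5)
Your proof is correct and is essentially the argument the paper has in mind: it explicitly describes the proof as combining Jensen's inequality with the area formula for principal $\WW^{1,2}_{\loc}$ maps, and your two-step chain (Jensen for the jointly convex $\Phi$ applied to $z\mapsto(\D f(z),\det\D f(z))$, then the monotonicity hypothesis together with the area-theorem bound $\fint_{\DD}\det\D f\,\dd m\le\det A_f$) is precisely that combination. The only minor remark is that you invoke the area formula with equality, while the one-sided inequality $\int_{\DD}\det\D f\,\dd m\le|f(\DD)|$, valid for all monotone Sobolev maps without any Lusin (N) discussion, already suffices; and the applicability of Jensen to the extended-real convex $\Phi$ is indeed unproblematic since proper convex functions on finite-dimensional spaces admit affine minorants, so the negative part of $\Phi(\D f,\det\D f)$ is integrable.
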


\begin{proof}
Let $f \colon \C \to \C$ be an orientation-preserving homeomorphism of class $\WW^{1,1}_{\loc}( \C )$, which is conformal
on $\C \setminus \overline{\DD}$ with the Laurent expansion
\begin{equation}\label{laurent}
f(z) = b_{0}z + \frac{b_{1}}{z}+\sum_{n=2}^{\infty} \frac{b_{n}}{z^n} \, \mbox{ for } \, |z| > 1.
\end{equation}
Note that a principal map in the sense of our Definition \ref{def:principal} in particular will satisfy these conditions.
According to the area formula (see \cite[Th. 2.10.1 \& Cor. 2.10.2]{AIMbook}) we have, in terms of the notation introduced after
Definition \ref{def:principal}, that
\begin{equation}\label{areaformula}
\fint_{\DD} \! J_{f} \, \dd m(z) \leq \det A_{f} - \sum_{n=2}^{\infty} n|b_{n}|^{2},
\end{equation}
and therefore in particular, $0 \leq \fint_{\DD} \! J_{f} \, \dd m(z) \leq \det A_{f}$. We employ this inequality in conjunction with Jensen's
inequality whereby
$$
\fint_{\DD} \! \mathbf{P}( \D f) \, \dd m(z) \geq P \left( A_{f}, \fint_{\DD} \! J_{f} \, \dd m(z) \right) .
$$
Here we have $0 \leq \fint_{\DD} \! J_{u} \, \dd m(z) \leq \det A_{f} \leq |A_{f}|^2$, where the last inequality is Hadamard's. We may now conclude
with the desired inequality if we use that $t \mapsto P(A_{f},t)$ is non-increasing on $[0,|A_{f}|^{2}]$.
\end{proof}
We record that the above argument also illustrates why $A \mapsto J_{A}$ is not $\WW^{1,\infty}$-principal quasiconvex. Indeed the area inequality \eqref{areaformula}
immediately implies that $\fint_{\DD} \! J_{f} \, \dd m(z) < \det A_{f}$ whenever $f$ is a $\WW^{1,\infty}_{\loc}$ principal map where at least
one of the coefficients $b_n \neq 0$ for $n \geq 2$ in its Laurent expansion \eqref{laurent}.
Prime examples of polyconvex functionals that are also $\WW^{1,1}$-principal quasiconvex are, minus the determinant, $-J_{A}$, and the distortion
functional $\mathbf{K}(A) \equiv K_{A}$. It is well-known from \cite{BM1} that $A \mapsto -J_{A}$ is $\WW^{1,p}$-quasiconvex if and only if $p \geq 2$, so the fact
that we in the definition of $\WW^{1,1}$-principal quasiconvexity only test with special $\WW^{1,1}$ maps is essential. See also \cite{MuSiSp99}
for further discussion of the $\WW^{1,p}$-quasiconvexity condition that involves the distributional Jacobian.

The second invariant $\mathbf{I_{2}}(A) = K_{A}+1/K_{A}$ is also simultaneously polyconvex and $\WW^{1,1}$-principal quasiconvex. This is a consequence of the
following result concerning so-called isochoric, or conformally invariant, free energy functionals, meaning those isotropic functionals that are also homogeneous of
degree $0$. The canonical example of an isochoric functional is the distortion $A \mapsto K_A$ and it can be shown that ${\bf H} \colon \M_{+} \to \R \cup \{ +\infty \}$
is isochoric precisely when ${\bf H}(A) = H(K_{A})$ for all $A \in \M_{+}$, where $H \colon [1,\infty ) \to \R \cup \{ +\infty \}$. It is sometimes convenient to
use the equivalent characterization in terms of the second invariant. Namely, ${\bf H}$ is isochoric precisely when ${\bf H}(A) = \tilde{H}( {\bf I}_{A})$,
where $\tilde{H} \colon [2,\infty ) \to \R \cup \{ +\infty \}$. For instance, see \cite{Voss0} and the references therein. 
This class of energy functionals and the associated variational problems have been discussed by a number of authors,
including \cite[Chap.~21]{AIMbook}, \cite{AIMO05}, \cite{MartinYao24} and \cite{Voss0,Voss1,Voss2}. The following result is stated in terms of the distortion
function, but an entirely analoguous statement can be given in terms of the second invariant. We leave the details of this to the interested reader.

\begin{proposition}\label{qc-isochoric}
Let $H \colon [1,\infty ) \to \R \cup \{ +\infty \}$ be a lower semicontinuous and extended real-valued function. Define ${\bf H}(A) \equiv H(K_{A})$, $A \in \M_{+}$.
Then the following are equivalent:    
\begin{itemize}
\item[(i)] ${\bf H}$ is rank-one convex;
\item[(ii)] ${\bf H}$ is $\WW^{1,1}$-principal quasiconvex;
\item[(iii)] ${\bf H}$ is polyconvex;
\item[(iv)] $H$ is convex and non-decreasing.
\end{itemize}
\end{proposition}

\begin{proof}
The pairwise equivalences of (i), (iii) and (iv) follow from \cite{Voss0}. To see the equivalence to (ii) we first observe that an argument similar to the
one used in the proof of Proposition \ref{polycon} yields (ii) from (iv). We therefore conclude the proof if we can show that (i) follows from (ii).
Assume that (ii) holds and fix matrices $A_0$, $A_{1} \in \M_{+}$ that are rank-one connected. Put for a weight $\lambda \in (0,1)$,
$A_{\lambda} = (1-\lambda )A_{0}+\lambda A_{1}$. We can without loss in generality assume that ${\bf H}(A_{0})$, ${\bf H}(A_{1}) < +\infty$. Note that $A_{\lambda} \in \M_{+}$
and $1 \leq K_{A_i} < +\infty$, hence by rank-one convexity of the distortion, $K_{A_{\lambda}} \leq (1-\lambda )K_{A_0}+\lambda K_{A_1} \leq K \equiv \max \{ K_{A_0},K_{A_1} \} < +\infty$.
Write $A_{1}-A_{0} = a \otimes n$ with $|n|=1$, take $\theta \in \mathrm{SO}(2)$ with $\theta e_{1} = n$, denote $Q = (0,1)^2$ and $R = \theta \bigl( Q \bigr)$.
Define, in terms of real notation,
$$
v(z) \equiv \left\{
\begin{array}{ll}
  ( A_{0}-A_{\lambda}) z + (1-\lambda )a & \mbox{ in } R_{0},\\
  ( A_{1}-A_{\lambda}) z & \mbox{ in } R_{1},
\end{array}
\right.
$$
where $R_{i} = \theta \bigl( Q_{i} \bigr)$, $Q_{0} = (0,1-\lambda ) \times (0,1)$, $Q_{1} = ( 1-\lambda ,1) \times (0,1)$.
Observe that hereby $v \colon R \to \R^2$ is a Lipschitz map that is constant in the direction perpendicular to $n$ and
$v\bigl( \theta (te_{2})\bigr) = v\bigl( n+\theta (te_{2})\bigr)$ for $t \in (0,1)$. If therefore we extend $v$ to $\R^2$ by $R$-periodicity, then we obtain
a Lipschitz and $R$-periodic map that we again denote $v \colon \R^2 \to \R^2$. By inspection
$$
\D v = \left\{
\begin{array}{ll}
  A_{0}-A_{\lambda} & \mbox{ in } R_{0},\\
  A_{1}-A_{\lambda} & \mbox{ in } R_{1},\\
\end{array}
\right.
$$
hence $\int_{R} \! \D v \, \dd m(z)=0$. Put $u_{j}(z) \equiv A_{\lambda}z + \frac{1}{j}v(jz)$, $z \in \DD$, and record that $u_j$ is Lipschitz with
$$
\D u_{j} = \left\{
\begin{array}{ll}
  A_{0} & \mbox{ in } R_{0,j},\\
  A_{1} & \mbox{ in } R_{1,j},\\
\end{array}
\right.
$$
where the disjoint sets $R_{0,j}$, $R_{1,j} \subset \DD$ satisfy $\mathscr{L}^{2}(R_{0,j}) = (1-\lambda )\pi + \mathrm{o}(1)$,
$\mathscr{L}^{2}(R_{1,j}) = \lambda \pi + \mathrm{o}(1)$. In particular, $K_{u_j} \leq K$, so $u_j$ is $K$-quasiregular and
we may Stoilow factorize it: $u_{j} = h_{j} \circ f_{j}$ on $\DD$, where $f_{j} \colon \C \to \C$ is a $K$-quasiconformal
normalized principal map with $K_{f_j}=K_{u_j}$ and $h_{j} \colon f_{j}( \DD ) \to \C$ is holomorphic. It follows
from the Riemann-Lebesgue lemma that $u_{j} \weakstar A_{\lambda}$ in $\WW^{1,\infty}( \DD )$ and so by uniqueness of the Stoilow
factorization we find that $f_{j} \to f$ uniformly on $\C$ and weakly in $\WW^{1,2}_{\loc}( \C )$ and $h_{j} \to h$ locally uniformly
on $f( \DD )$. Here we have in terms of $A_{\lambda} = (a_{\lambda ,+},a_{\lambda ,-})$ and in complex notation,
$$
f(z) = \left\{
\begin{array}{ll}
  z+\frac{a_{\lambda ,-}}{a_{\lambda ,+}}\bar{z} & \mbox{ if } |z| \leq 1,\\
  z+\frac{a_{\lambda ,-}}{a_{\lambda ,+}}\frac{1}{z} & \mbox{ if } |z| > 1,\\
\end{array}
\right.
\quad \mbox{ and } \quad h(z) = a_{\lambda ,+}z.
$$
Because
$$
K_{f_j} = K_{u_j} = \left\{
\begin{array}{l}
  K_{A_0} \mbox{ on } R_{0,j},\\
  K_{A_1} \mbox{ on } R_{1,j},
\end{array}
\right.
$$
we get
$$
\fint_{\DD} \! {\bf H}( \D f_{j}) \, \dd m(z) = (1-\lambda ){\bf H}(A_{0})+\lambda {\bf H}(A_{1}) + \mathrm{o}(1)
$$
and here the left-hand side is estimated using $\WW^{1,1}$-principal quasiconvexity,
$$
{\bf H}\left( \fint_{\DD} \! \D f_{j} \, \dd m(z) \right)  \leq \fint_{\DD} \! {\bf H}( \D f_{j}) \, \dd m(z).
$$
Because $\fint_{\DD} \! \D f_{j} \, \dd m(z) \to A_{f} = A_{\lambda}/a_{\lambda ,+}$ and since $K_{A_{\lambda}/a_{\lambda ,+}} = K_{A_{\lambda}}$ we
may use the lower semicontinuity of $H$ to conclude.
\end{proof}
Next we turn to the class of free energy functionals that are isotropic and admit an additive isochoric-volumetric split, meaning they must
have the general form $${\bf E}(A) = H(K_{A})+G(J_{A}), \quad A \in \M_{+},$$ where $H$ and $G$ are extended real-valued functions. Such free energy
functionals have been considered by many authors and are often used to model slightly compressible materials, see for instance
\cite{Flory,ChDaHaLa88,HartmannNeff,OgdenBook}.
Whereas the case of rank-one convexity for the individual terms $H(K_{A})$ and $G(J_{A})$ is characterized by Propositions \ref{qc-isochoric} and \ref{polycon}, respectively,
the rank-one convexity of ${\bf E}(A)$ is more involved and does not necessarily mean that both $H(K_{A})$ and $G(J_{A})$ are rank-one convex.
The precise conditions have been worked out in the interesting paper \cite[Th.~2.6]{Voss0} and following \cite{Voss1} we shall here focus on the special
case when the isochoric part is rank-one convex. In that situation it was shown in \cite{Voss1} (see also \cite[Sect.~11]{AFGKKpreprint})
that ${\bf E}$ is rank-one convex if and only if there exists an isotropic polyconvex functional ${\bf P}$ and a non-negative constant $c \geq 0$ such that
${\bf E}={\bf P} + c{\bf W}$, where
\begin{equation}\label{wmagic}
  {\bf W}(A) \equiv K_{A} -\log K_{A} + \log J_{A} \, \mbox{ for } A \in \M_{+},
\end{equation}
as observed in \cite{Voss1}, is an isotropic rank-one convex, non-polyconvex functional. Indeed it cannot be polyconvex because ${\bf W}(tA) \to -\infty$ as $t \to 0$.
If we follow the convention \eqref{convention} of this paper and extend ${\bf W}$ to full matrix space $\M$ and at the same time insist that the extension be
lower semicontinuous, then we must define
$$
{\bf W}(A) = \left\{
\begin{array}{ll}
  K_{A} -\log K_{A} + \log J_{A} & \mbox{ if } A \in \M_{+},\\
  -\infty & \mbox{ if } A=0,\\
  +\infty & \mbox{ if } A \in \M \setminus \bigl( \M_{+} \cup \{ 0 \} \bigr) .
\end{array}
\right.
$$
We are therefore forced to consider extended real-valued integrands that assume both $\pm \infty$. This example, together with the local Burkholder
functional \eqref{localburk}, are our reasons for adopting an ad hoc approach to the value at $0$ when extending functionals from $\M_{+}$ to $\M$.
We remark that this extension of ${\bf W}$ remains isotropic and rank-one convex (with the natural definitions, see for instance \cite{AFGKKpreprint}).
The following result is proved in \cite[Cor.~1.7, Th.~1.8 \& Th.~1.9]{AFGKKpreprint}. We emphasize that it in particular yields a positive solution to
Morrey's problem within the considered class of functionals.

\begin{theorem}\label{isochoric-volumetric}
Let ${\bf E}(A) = H(K_{A})+G(J_{A})$, $A \in \M_{+}$, where $H \colon [1,\infty ) \to \R \cup \{ +\infty \}$ and $G \colon (0, \infty ) \to \R \cup \{ +\infty \}$
are lower semicontinuous extended real-valued functions. Assume $H$ is convex.
Then ${\bf E}$ is rank-one convex if and only if ${\bf E}$ is $\WW^{1,2}$-quasiconvex. 

Furthermore, the functional ${\bf W}$ from \eqref{wmagic}  is $\WW^{1,1}$-principal quasiconvex.
\end{theorem}
We return to the Burkholder functional $\B_p \colon \M \to \R$ that was defined for exponents $p \geq 2$ at \eqref{burkholder} in the introduction.
In complex notation it is given by
\begin{equation}\label{burkholder2}
\mathbf{B}_p(A)= \bigl( (p-1)|a_-|-|a_+| \bigr) \bigl( |a_+|+|a_-| \bigr)^{p-1}
\end{equation}
for a matrix $A \in \M$ that has conformal-anticonformal coordinates $(a_+,a_-) \in \C^2$. It is isotropic, $p$-homogeneous and rank-one convex, but
it is an open question whether it is also $\WW^{1,\infty}$-quasiconvex. Its quasiconvexity would among other things provide an alternative approach
to the optimal higher integrability properties of quasiconformal maps \cite{Astala94}, see \cite{Tade} where the reader can also find interesting extensions
to higher dimensions $n > 2$. In the recent work, \cite{AFGKKpreprint}, building on \cite{AIPS12,AIPS15a}, it was shown that the Burkholder functional is
quasiconvex when restricted to quasiconformal test functions with appropriate distortion. The precise result is conveniently stated in terms of the
local Burkholder functional.

\begin{theorem}\label{locburkisqc}
Let $p>2$, $p^{\prime} = p/(p-1)$ and $K = p/(p-2)$. The local Burkholder functional $\B_{K}^{\loc}$ defined at \eqref{localburk} is simultaneously
$\WW^{1,1}$-principal quasiconvex, $\WW^{1,p^{\prime}}$-quasiconvex and closed $\WW^{1,q}$-quasiconvex for each $q > p^{\prime}$.
\end{theorem}
We refer to \cite[Th.~1.4 \& Th.~1.6]{AFGKKpreprint} for the proofs and note that the statement is essentially sharp.
Indeed, if we test $\B_{K}^{\loc}$ with the staircase laminate introduced in \cite[Example 5.4]{DF1} we see that it is not closed $\WW^{1,q}$-quasiconvex for $q < 2K/(K+1)$.
Moreover, if we test $\B_{K}^{\loc}$ with the map obtained in \cite[Theorem 3.2]{AFS08} we see that it is not even $\WW^{1,q}$-quasiconvex for
$q < 2K/(K+1)$. That $\B_{K}^{\loc}$ can be $\WW^{1,1}$-principal quasiconvex is because we only require Jensen's inequality for homeomorphisms in the definition
of principal quasiconvexity and a quasiconformal homeomorphism of Sobolev class $\WW^{1,1}_{\loc}$ is automatically in $\WW^{1,2}_{\loc}$.\footnote{This is classical.
A proof follows by applying, for example \cite[Corollary 3.3.6]{AIMbook}, to get that the Jacobian is in $\LL^1$ and deduce the square integrability of the
full derivative by the distortion inequality.}

We have already noted that $\B_{K}^{\loc}$ is non-positive on its effective domain which is
the $K$-quasiconformal well $Q_{2}(K)$. Indeed, it is $0$ on its boundary and strictly negative in its interior. If we combine Theorem \ref{locburkisqc} with
Jensen's inequality for convex functions we deduce the following corollary.


\begin{corollary}\label{clearly}
Let $p>2$, $p^{\prime} = p/(p-1)$ and $K = p/(p-2)$. Assume $\theta \colon ( -\infty , 0) \to \R \cup \{ +\infty \}$ is a lower semicontinuous convex
and non-decreasing function satisfying $\theta (t) \to +\infty$ as $t \to 0-$. Then the functional $\B_{p}^{\theta} \colon \M \to \R \cup \{ +\infty \}$
defined by
$$
\B^{\theta}_{p}(A) \equiv \left\{
\begin{array}{ll}
\theta \bigl( \B_{p}(A) \bigr) & \mbox{ if } A \in \M_{+} \mbox{ and } K_{A} < K,\\
+\infty & \mbox{ if } A \in \M \mbox{ and } A=0 \mbox{ or } K_{A} \geq K,
\end{array}
\right.
$$
is $\WW^{1,1}$-principal quasiconvex, $\WW^{1,p^{\prime}}$-quasiconvex and closed $\WW^{1,q}$-quasiconvex for each $q>p^{\prime}$. 
\end{corollary}
We leave the elementary verification to the interested reader. We shall exhibit one particular example from this and start by
recalling that $\B_p$ is isotropic. If therefore we restrict $\B_{p}(A)$ to $A \in \M_{+}$ it can be expressed in terms of the
distortion and the Jacobian of $A$, where, if we assume $p>2$ and keep writing $K = p/(p-2)$, it becomes
$$
\B_{p}(A) = \frac{p-2}{2} \bigl( K_{A}-K \bigr) K_{A}^{\frac{p-2}{2}}J_{A}^{\frac{p}{2}}, \quad A \in \M_{+} .
$$
It clearly does not admit an additive isochoric-volumetric split. However, it does admit a multiplicative isochoric-volumetric split
as
$$
\B_{p}(A) =  \Phi (K_{A}) \Psi (J_{A}),
$$
where $\Phi (s) = \frac{p-2}{2}(s-K)s^{\frac{p-2}{2}}$, $s \geq 1$, and $\Psi (t)=t^{\frac{p}{2}}$, $t>0$. It is interesting to note that
both $\Phi$ and $\Psi$ are increasing and convex on their respective domains. We exploit this as follows and note first that
$\Phi (s) <0$ for $s \in [1,K)$ corresponding to $\B_{p}(A) < 0$ on the interior of the $K$-quasiconformal well. We may therefore define
the functional $\B^{\theta}_{p}$ corresponding to $\theta (t) = -\log (-t)$, $t<0$, whereby the functional
\begin{equation}\label{logburk}
\mathbf{L}_{p}(A) \equiv \B_{p}^{\theta}(A) = \left\{
\begin{array}{ll}
 -\log \bigl( - \Phi (K_{A}) \bigr) - \log \bigl( \Psi (J_{A}) \bigr)  & \mbox{ if } A \in \M_{+} \mbox{ and } K_{A} < K,\\
  +\infty & \mbox{ otherwise,}
\end{array}
\right.
\end{equation}
results and it clearly has the additive isochoric-volumetric split form. By virtue of Propositions \ref{polycon} and \ref{qc-isochoric}
both the isochoric and volumetric parts are polyconvex, so $\mathbf{L}_{p}$ is in particular polyconvex too. 

The following proposition gives, modulo a one-sided growth condition, a local condition for a quasiconvex function to be automatically principal
quasiconvex. The interested reader will see that the statement in particular applies to the functional ${\bf W}$ defined at \eqref{wmagic}.

\begin{proposition}\label{superharmonic}
Let ${\bf E} \colon \M_{+} \to \R$ be $\WW^{1,\infty}$-quasiconvex. Suppose that
\begin{equation}\label{supgrow}
{\bf E}(A) \leq c\left( 1+ K_{A}+ \log \left( 1+ \frac{1}{J_{A}}\right) +J_{A}^{\alpha} \right)
\end{equation}
and that the function
\begin{equation}\label{shh}
\C \setminus \{ 0 \} \ni w \mapsto {\bf E}(wA) \, \mbox{ is superharmonic}
\end{equation}
for all $A \in \M_{+}$, where $c>1$ and $\alpha \in (0,1)$ are constants.
Then ${\bf E}$ is $\WW^{1,1}$-principal quasiconvex.
\end{proposition}
One may give versions of this result also for functionals defined on quasiconformal wells and that are modelled on the local
Burkholder functionals; we intend to report on that elsewhere. It is possible to simplify the proof of Proposition \ref{superharmonic}
below if we replace the condition \eqref{shh} by the stronger condition that the function
\begin{equation}\label{concaves}
  \C \setminus \{ 0 \} \ni w \mapsto {\bf E}(wA) \, \mbox{ is locally concave}.
\end{equation}
Local concavity amounts to concavity of the function near each $w_{0} \neq 0$ and is used here because the set $\C \setminus \{ 0 \}$ clearly
is not convex. It suffices for \eqref{sh} below since it allows us, via Jensen's inequality, to check that the composite function
$z \mapsto {\bf E}(h^{\prime}(z)A)$ locally satisfies the mean-value inequality that is known to be equivalent to superharmonicity.
However, as ${\bf E}$ is real-valued and quasiconvex it is also rank-one convex and so locally Lipschitz. It is then not difficult to see
that the local concavity \eqref{concaves} implies concavity in the sense that ${\bf E}(wA)$ lies below its supporting hyperplanes 
everywhere on $\C \setminus \{ 0 \}$. The interested reader will see that this observation allows a simplification of the proof below.

The following lemma characterizes the condition \eqref{shh} in the case the functional ${\bf E}$ is isotropic.
Its proof is a straightforward calculation that is left to the interested reader.
\begin{lemma}\label{sh-iso}
Suppose ${\bf E}(A) = E(K_{A},J_{A})$,  $A \in \M_{+}$, where $E \colon [1,\infty ) \times (0,\infty ) \to \R$ is locally Lipschitz.
Then \eqref{shh} holds if and only if the function $(s,t) \mapsto E(s,t)$ satisfies
$$
t\partial_{t}^{2}E + \partial_{t}E \leq 0
$$
in the sense of distributions.
\end{lemma}

The proof of Proposition~\ref{superharmonic} requires a number of further results that we turn to next and in particular
it also uses elements from \cite{AFGKKpreprint}. The most important is \cite[Lemma 7.2]{AFGKKpreprint} that we for convenience of the
reader state here in a form adapted to our setting.

\begin{lemma}\label{karitrick}
Suppose that $f \colon \C \to \C$ is a principal map of class $\WW^{1,1}_{\loc}( \C )$ with Laurent expansion \eqref{laurent}
in the complement of the unit disk. Put $R_{f}(z) \equiv b_{0}z + b_{1}/z$ and $h \equiv f \circ R_{f}^{-1}$ on $\C \setminus A_{f}( \overline{\DD})$.
Then $h \colon \C \setminus A_{f}( \overline{\DD}) \to f( \C \setminus \overline{\DD})$ is a conformal homeomorphism and
$$
g \equiv \left\{
\begin{array}{ll}
  f & \mbox{ in } \overline{\DD},\\
  h \circ A_{f} & \mbox{ in } \C \setminus \overline{\DD},
\end{array}
\right.
$$
is a homeomorphism of class $\WW^{1,1}_{\loc}( \C )$. Moreover, $h(z) = z + \mathrm{O}(z^{-2})$ as $|z| \to \infty$.
\end{lemma}

The next auxiliary lemma concerns an elementary implication of the superharmonicity condition \eqref{shh}.

\begin{lemma}\label{shfromshh}
Assume ${\bf E} \colon \M_{+} \to \R$ is locally Lipschitz and satisfies \eqref{shh}. Then for each domain $\Omega$ in $\C$
and conformal map $h \colon \Omega \to \C$ the composite map
\begin{equation}\label{sh}
  \Omega \ni z \mapsto {\bf E}(h^{\prime}(z)A)
\end{equation}  
is superharmonic for each $A \in \M_{+}$.
\end{lemma}

\begin{proof}
Using mollification if necessary we may assume without loss of generality that ${\bf E}$ is smooth. If we calculate the
Laplacian in polar coordinates we see that \eqref{shh} is equivalent to validity of
$$
0 \geq {\bf E}^{\prime \prime}(wA)[wA,wA] + {\bf E}^{\prime \prime}(wA)[wJA,wJA]
$$
for all $w \in \C \setminus \{ 0 \}$ and $A \in \M_{+}$. Here $J$ is the anti-clockwise rotation of $\pi /2$ about the origin.
Calculating the Laplacian in cartesian coordinates we find for $h$ as in the statement that
$$
\Delta {\bf E}(h^{\prime}(z)A) = {\bf E}^{\prime \prime}(h^{\prime}(z)A)[ \partial_{x}\nabla h A,\partial_{x}\nabla hA] +
{\bf E}^{\prime \prime}(h^{\prime}(z)A)[ \partial_{y}\nabla h A,\partial_{y}\nabla hA]
$$
holds. Using the Cauchy-Riemann equation we check that $\partial_{y}\nabla h = \partial_{x}\nabla h J$, and this concludes the proof.
\end{proof}

The following result might be well-known to experts, but as we could not find it stated anywhere in the literature we include it here
for completeness.

\begin{proposition}\label{upgradeapprox}
Let $\Omega$, $\Lambda$ be bounded domains in $\C$ and $f \colon \overline{\Omega} \to \overline{\Lambda}$ be a homeomorphism
such that its restriction to $\Omega$ is orientation-preserving and of class $\WW^{1,2}( \Omega )$. Assume that $( f_{n})$ is
a sequence of homeomorphisms of $\overline{\Omega}$ onto $\overline{\Lambda}$ such that $f_{n} = f$ on $\partial \Omega$,
the restriction of $f_n$ to $\Omega$ is a $\CC^1$-diffeomorphism of $\Omega$ onto $\Lambda$, and $f_{n} \to f$ uniformly
on $\overline{\Omega}$ and strongly in $\WW^{1,2}( \Omega )$.

Then each $f_{n}^{-1}$, $f^{-1} \in \WW^{1,1}( \Lambda )$, $f_{n}^{-1} \to f^{-1}$ uniformly on $\overline{\Lambda}$ and strongly
in $\WW^{1,1}( \Lambda )$.
\end{proposition}

\begin{proof}
We start by showing the uniform convergence and fix a sequence $w_{n} \to w$ in $\overline{\Lambda}$. Since $f_{n}^{-1}(w_{n}) \in \overline{\Omega}$
it follows by compactness that any given subsequence admits a further subsequence that converges. Thus through a suitable subsequence we have
that $f_{n}^{-1}(w_{n}) \to z$ in $\overline{\Omega}$. By uniform convergence of $f_{n}$ to $f$ we then get $w=f(z)$, and thus $z=f^{-1}(w)$.
A routine subsequence argument now allows us to conclude that $f_{n}^{-1} \to f^{-1}$ uniformly on $\overline{\Lambda}$.

We turn to the strong convergence in $\WW^{1,1}( \Lambda )$ and note first that according to \cite{HK} we have $f^{-1} \in \WW^{1,1}( \Lambda )$,
$J_{f^{-1}} > 0$ a.e.~and
$$
\int_{\Omega} \! \| \D f\| \, \dd m(z) = \int_{\Lambda} \! \| \D (f^{-1}) \| \, \dd m(w).
$$
Here $\| \cdot \|$ denotes the Frobenius norm on $\M$. Because especially $\D f_{n} \to \D f$ strongly in $\LL^{1}( \Omega )$ we get,
using also that $f_n$ are $\CC^1$-diffeomorphisms and a change-of-variables, that
\begin{equation}\label{extractionpoint1}
\int_{\Lambda} \! \| \D (f_{n}^{-1}) \| \, \dd m(w) \to \int_{\Lambda} \! \| \D (f^{-1}) \| \, \dd m(w).
\end{equation}
It follows from this that the sequence $\bigl( \D (f_{n}^{-1}) \bigr)$ is equi-integrable on $\Lambda$. We shall prove this
and the strong convergence using generalized Young measures and for this we rely on notation and results from \cite{AlBo}.
Our notation and the set-up for generalized Young measures follow precisely that of \cite{KrRa22}, which is adapted from \cite{AlBo}.
Because $\bigl( \D (f_{n}^{-1}) \bigr)$ is bounded in $\LL^{1}( \Lambda , \M )$ it follows that any given subsequence admits
a further subsequence, for convenience not relabelled, such that for a generalized Young measure $\nu = ( \nu_{w},\lambda ,\nu_{w}^{\infty})$ we
have
$$
\D ( f_{n}^{-1}) \stackrel{{\rm Y}}{\to} \bigl( \nu_{w}, \lambda , \nu_{w}^{\infty} \bigr) ,
$$
see \cite[Sect.~2.4 and Lemma 2.3]{KrRa22}. We have $\D (f_{n}^{-1}) \to \D (f^{-1})$ in the sense of distributions on $\Lambda$ and in view
of the $\LL^1$ bound the convergence is weak$\mbox{}^\ast$ in $\CC_{0}( \Lambda )^\ast$. We may then identify the weak$\mbox{}^\ast$ limit using
the Young measure as follows:
$$
\langle \nu_{w},\mathrm{Id} \rangle = \D (f^{-1})(w) \quad \mbox{$\mathscr{L}^{2}$-a.e.~in $\Lambda$, and }
\langle \nu_{w}^{\infty}, \mathrm{Id} \rangle = 0 \quad \mbox{$\lambda$-a.e.~in $\Lambda$}
$$
From \eqref{extractionpoint1} we get for the Young measure the identity
$$
\int_{\Lambda} \! \langle \nu_{w}, \| \cdot \| \rangle \, \dd m(w) + \lambda ( \overline{\Lambda}) =
\int_{\Lambda} \! \| \langle \nu_{w},\mathrm{Id} \rangle \| \, \dd m(w),
$$
and so, using the convexity of the Frobenious norm, $\lambda (\overline{\Lambda})=0$ and
$\langle \nu_{w}, \| \cdot \| \rangle = \| \langle \nu_{w},\mathrm{Id} \rangle \|$ for $\mathscr{L}^2$-a.e.~$w \in \Lambda$.
The former means that the (sub-)sequence $\bigl( \D (f_{n}^{-1}) \bigr)$ is equi-integrable on $\Lambda$, whereas the latter
only gives that $\nu_{w}$ is supported in the half-line $\{ t\langle \nu_{w},\mathrm{Id} \rangle : \, t \geq 0 \}$ for
$\mathscr{L}^2$-a.e.~$w \in \Lambda$ (see \cite{AlBo}). We assert that $\nu_{w}=\delta_{\D (f^{-1})(w)}$ for $\mathscr{L}^2$-a.e.~$w \in \Lambda$.
It is routine that this will conclude the proof. Now
$$
\D (f_{n}^{-1})(w) = \D f_{n}(f_{n}^{-1}(w))^{-1}
$$
and so for $\phi \in \CC ( \overline{\Lambda})$, $\Phi \in \CC_{0}( \M )$ we calculate using the change-of-variables $z=f_{n}^{-1}(w)$:
$$
\int_{\Lambda} \! \phi (w) \Phi ( \D (f_{n}^{-1})(w)) \, \dd m(w) = \int_{\Omega} \! \phi( f_{n}(z)) \Phi ( \D f_{n}(z)^{-1}) J_{f_{n}}(z) \, \dd m(z).
$$
Since $f_{n} \to f$ strongly in $\WW^{1,2}( \Omega )$ and $J_{f} > 0$ a.e.~in $\Omega$ we have $J_{f_n} \to J_f$ strongly in $\LL^{1}( \Omega )$
and $\D f_{n}( \cdot )^{-1} \to \D f( \cdot )^{-1}$ in $\mathscr{L}^2$-measure on $\Omega$. It follows then from Vitali's convergence
theorem that
\begin{equation}\label{extractionpoint2}
  \int_{\Lambda} \! \phi (w) \langle \nu_{w},\Phi \rangle \, \dd m(w) = \int_{\Omega} \! \phi (f(z)) \Phi \bigl( \D f(z)^{-1} \bigr) J_{f}(z) \, \dd m(z)
\end{equation}
Fix $\Phi \in \CC_{0}( \M )$ and let $N_{\Phi}$ denote the set of points $w \in \Lambda$ that are not Lebesgue points for $\langle \nu_{\cdot}, \Phi \rangle$,
or where $f^{-1}$ is not differentiable, or where $f^{-1}$ is differentiable and $J_{f^{-1}}(w)=0$. By the Lebesgue differentiation theorem,
Gehring-Lehto theorem and the previously mentioned result from \cite{HK} we have $\mathscr{L}^{2}(N_{\Phi})=0$.

Let $\bigl( \rho_{\varepsilon} \bigr)_{\varepsilon > 0}$ be a standard smooth and radial mollifier on $\R^2$. Take for $w_{0} \in \Lambda \setminus N_{\Phi}$
$\phi = \rho_{\varepsilon}(w_{0}- \cdot )$ for $\varepsilon \in (0, \mathrm{dist}(w_{0}, \partial \Lambda ))$ in \eqref{extractionpoint2} and write
$z_{0}=f^{-1}(w_{0})$ to get
$$
\langle \nu_{w_0}, \Phi \rangle = \lim_{\varepsilon \to 0^{+}} \int_{\Omega} \! \rho_{\varepsilon}\bigl( f(z_{0})-f(z) \bigr)
\Phi \bigl( \D f(z)^{-1} \bigr) J_{f}(z) \, \dd m(z).
$$
Our definition of $N_\Phi$ implies that $f$ is differentiable at $z_0$ and that $J_{f}(z_{0})>0$. Put $A_{0}=\D f(z_{0})$ and note that
\begin{eqnarray*}
  \lim_{\varepsilon \to 0^{+}} \int_{\Omega} \! \rho_{\varepsilon} \bigl( f(z_{0})-f(z) \bigr) \, \dd m(z) &=&
  \lim_{\varepsilon \to 0^{+}} \int_{\R^2} \! \rho_{\varepsilon} \bigl( A_{0}(z-z_{0})+ \mathrm{o}(z-z_{0}) \bigr) \, \dd m(z)\\
  &=& \lim_{\varepsilon \to 0^{+}} \int_{\R^2} \! \rho_{\varepsilon} \bigl( w+ \mathrm{o}( A_{0}^{-1}w) \bigr) J_{A_0}^{-1} \, \dd m(w) = J_{A_0}^{-1}.
\end{eqnarray*}
But then standard results from integration theory yield that
$$
\langle \nu_{w_0}, \Phi \rangle = \Phi \bigl( A_{0}) = \Phi \bigl( \D (f^{-1})(w_{0}) \bigr)
$$
and because the space $\CC_{0}( \M )$ is separable in the sup-norm this concludes the proof.
\end{proof}

\begin{proof}[Proof of Proposition~\ref{superharmonic}]
Fix a principal map $f \colon \C \to \C$ of class $\WW^{1,1}_{\loc}( \C )$ and use the notation of Lemma \ref{karitrick}. For $r>1$ we
consider the dilated map $g_{r}(z) \equiv g(rz)$, $z \in \C$. It has the same properties as $g$, but is additionally conformal
for all $|z| > 1/r$. To avoid complicated notation we shall in the following denote this dilated version simply as $g$ again and likewise
we write $h$ for $h_r$. Also denote $A = A_{g}= (rb_{0},b_{1}/r)$.
We now employ \cite[Cor.~1.2]{IKO} (see also \cite{HP18,Campbell17,DePPra20} and the references therein) to the inverse map $g^{-1}$, which
in view of \eqref{l1distort} is a $\WW^{1,2}$-homeomorphism of $g( \overline{\DD} )$ onto $\overline{\DD}$ and it is diffeomorphic close
to the boundary $\partial g( \DD )$.
Hereby we find a sequence $( g_{n})$ of $\CC^{1}$-diffeomerphisms $g_{n} \colon \DD \to g( \DD )$ with $g_{n}^{-1} = g^{-1}$ on $\partial g( \DD )$,
$g_{n}^{-1} \to g^{-1}$ uniformly on $g( \overline{\DD})$ and strongly in $\WW^{1,2}( g( \DD ))$. Using Proposition \ref{upgradeapprox} we
infer that also $g_{n} \to g$ uniformly on $\overline{\DD}$ and strongly in $\WW^{1,1}( \DD )$. Clearly also $g_{n}=g$ on $\partial \DD$.
Put $\mathcal{E} \equiv A( \overline{\DD})$ and note it is an ellipse.
If we extend each $g_n$ to $\C$ by $h \circ A_{f}$ we obtain Lipschitz maps with the property that $g_{n} \circ A_{f}^{-1}=h$ is conformal
on $\C \setminus \mathcal{E}$ with Lipschitz derivative $h^{\prime}$ and so $h^{\prime}(z) = 1 + \mathrm{O}(z^{-3})$ as $|z| \to \infty$.
Because ${\bf E}$ as mentioned before must be locally Lipschitz we may use a simple change of variables to see, using the asymptotics
of $h$, that ${\bf E}( \D g_{n}) - {\bf E}(A)$ is integrable over $\C$ and proceed similarly to \cite[Lemma 7.2]{AFGKKpreprint}
to obtain, for each fixed $n \in \N$, from the assumed $\WW^{1,\infty}$-quasiconvexity at $A=A_{g_{n}}$ that
$$
\int_{\C} \! \bigl( {\bf E}( \D g_{n}) - {\bf E}(A) \bigr) \, \dd m(z) \geq 0.
$$
We continue with
\begin{eqnarray*}
  \int_{\DD} \! \bigl( {\bf E}( \D g_{n} )-{\bf E}( A) \bigr) \, \dd m(z) &\geq& -
  \int_{\C \setminus \DD} \! \bigl( {\bf E}(h^{\prime}(Az)A) - {\bf E}(A) \bigr) \, \dd m(z)\\
  &=& -\frac{1}{J_A}\int_{\C \setminus \mathcal{E}} \! \bigl( {\bf E}(h^{\prime}(z)A) - {\bf E}(A) \bigr) \, \dd m(z),
\end{eqnarray*}
where we note that the last integrand is integrable and, by assumption \eqref{sh} is also superharmonic over the complement $\C \setminus \mathcal{E}$. 
But the complement of an ellipse is by a result of Sakai \cite{Sakai} a null quadrature domain, so the integral of an integrable
superharmonic function there is non-positive, hence we arrive at
$$
\int_{\DD} \! \bigl( {\bf E}( \D g_{n} )-{\bf E}( A) \bigr) \, \dd m(z) \geq 0.
$$
In order to pass to the limit $n \to \infty$ here we must invoke the growth condition \eqref{supgrow}. First recall \eqref{l1distort} and using
that $g_{n}( \DD ) = g( \DD )$ we get by the convergence properties of $g_n$:
$$
\int_{\DD} \! K_{g_n} \, \dd m(z) = \int_{g ( \DD )} \! | \D (g_{n}^{-1})|^{2} \, \dd m(z) \to \int_{g( \DD )} \! | \D (g^{-1})|^{2} \, \dd m(z)
= \int_{\DD} \! K_{g} \, \dd m(z).
$$
Since also $K_{g_n} \to K_{g}$ in $\mathscr{L}^2$ measure it follows that $K_{g_n} \to K_g$ in $\LL^{1}( \DD )$. In particular, the sequence
$( K_{g_n})$ is equi-integrable on $\DD$. For the Jacobians $J_{g_n}$ we note that $g_{n}( \DD_{r}) = g( \DD_{r})$ for all $r \geq 1$, hence the area formula gives
$\int_{\DD_{r}} \! J_{g_n} \, \dd m(z) = \mathscr{L}^{2}( g( \DD_{r} ))$, so $( J_{g_n} )$ is bounded in $\LL^{1}_{\loc}( \C )$. But then $( J_{g_n}^{\alpha})$ is
equi-integrable on $\DD$ by de la Vall\'{e}e Poussin's criterion.

We turn to the term $\log \bigl( 1+1/J_{g_n}\bigr)$ and remark that as a $\WW^{1,1}$-homeomorphism of integrable distortion the map
$g$ has $J_{g} > 0$ a.e. (see \cite{HK}), so clearly $\log \bigl( 1+1/J_{g_n}\bigr) \to \log \bigl( 1+1/J_{g}\bigr)$ in $\mathscr{L}^2$
measure on $\DD$. Even though $g_{n}=g$ on $\partial \DD$ so that the local bound from \cite{KO,KOR} can be improved to a global bound
\cite[Prop.~12.2]{AFGKKpreprint}, and in fact that we have a very precise bound resulting from $\WW^{1,1}$-principal quasiconvexity of
the functional ${\bf W}$ from Theorem \ref{isochoric-volumetric}, we only get that the sequence $\bigl( \log \bigl( 1+1/J_{g_n}\bigr) \bigr)$
is bounded in $\LL^{1}( \DD )$. We must however exclude concentration in order to proceed with the limit, so the bounds must be upgraded.

To that end note that if $\Psi \colon [0, \infty ) \to [0, \infty )$ is an increasing and continuous function, then we get by elementary
calculus since $g_{n}( \DD ) = g( \DD )$ that
$$
\int_{\DD} \! \Psi \left( \frac{1}{J_{g_n}} \right) J_{g_n} \, \dd m(z) = \int_{g( \DD )} \! \Psi \bigl( J_{g_{n}^{-1}} \bigr) \, \dd m(z).    
$$
If $\Theta \colon [0,\infty ) \to [0,\infty )$ is an increasing, continuous function with $\Theta (t)/t \to 1$ as $t \to 0^{+}$
and $\Theta (t) \to \infty$ as $t \to \infty$, and we put $\Psi (t) = \Theta (t)t$ above then we find
\begin{equation}\label{orlicz}    
\int_{\DD} \! \Theta \left( \frac{1}{J_{g_n}} \right) \, \dd m(z) = \int_{g( \DD )} \! \Theta \left( J_{g_{n}^{-1}} \right) J_{g_{n}^{-1}} \, \dd m(z).
\end{equation}
Recall that $g_n$ equals $g$ outside $\DD$, that $g$ is smooth away from $\DD$ and that $g ( \C ) = \C$. Take a cube $Q$ centered
at $0$ and with axes parallel to the coordinate axes such that $g( \DD ) \subset Q$. We now employ \cite[Th.~2]{GrIwMo95}. Accordingly, if
$\theta \colon [0,\infty ) \to [0,\infty )$ is increasing, continuous $\theta (t)/t \to 1$ as $t \to 0^{+}$ and $\theta (t) \to \infty$ as $t \to \infty$,
then there exists a constant $C >0$ such that with
$$
\Psi (t) = \theta (t)t + t\int_{0}^{t} \! \frac{\theta (s)}{s} \, \dd s, \, t \geq 0,
$$
we have
\begin{equation}\label{gim}
\int_{Q} \! \Psi \bigl( J_{g_{n}^{-1}} \bigr) \, \dd m(z) \leq C\int_{2Q} \! \theta \bigl( | \D (g_{n}^{-1})|^{2} \bigr) | \D (g_{n}^{-1})|^{2} \, \dd m(z)
\end{equation}
for all $n \in \N$. Because $\bigl( \D (g_{n}^{-1}) \bigr)$ is $2$-equi-integrable it follows from the de la Vall\'{e}e-Poussin criterion that
we can chose a function $\theta$ as above and such that the right-hand side of \eqref{gim} is bounded for $n \in \N$.
Put
$$
\Theta (t) \equiv \int_{0}^{t} \! \frac{\theta (s)}{s} \, \dd s = \int_{0}^{\log (1+t)} \! \theta \bigl( \mathrm{e}^{r}-1 \bigr) \frac{\mathrm{e}^r}{\mathrm{e}^{r}-1} \, \dd r
$$
and note that $\Theta$ hereby is a continuous, increasing function with $\Theta (0)=0$ and
$$
\frac{\Theta (t)}{\log (1+t)} \to \infty \, \mbox{ as } \, t \to \infty .
$$
Consequently, if we define
$$
\psi (s) \equiv \int_{0}^{s} \! \theta \bigl( \mathrm{e}^{r}-1 \bigr) \frac{\mathrm{e}^r}{\mathrm{e}^{r}-1} \, \dd r, \, s \geq 0,
$$
then $\psi \colon [0,\infty ) \to [0,\infty )$ is continuous, increasing, $\psi (0)=0$ and $\psi (s)/s \to \infty$ as $s \to \infty$.
Since $\psi \bigl( \log (1+t) \bigr) t = \Theta (t)t \leq \Psi (t)$ and we use $\Psi$ in \eqref{orlicz} we arrive at
$$
\int_{\DD} \! \psi \left( \log \left( 1+\frac{1}{J_{g_n}} \right) \right) \, \dd m(z) \leq
C\int_{2Q} \! \Theta \bigl( | \D (g_{n}^{-1})|^{2} \bigr) \bigl| \D (g_{n}^{-1}) \bigr|^{2} \, \dd m(z),
$$
where the right-hand side as observed in \eqref{gim}  is bounded for $n \in \N$. De la Vall\'{e}e-Poussin yields equi-integrability of the sequence
$\bigl( \log \bigl( 1+1/J_{g_n}\bigr) \bigr)$ on $\DD$. Taken together with the previous observations we conclude that the sequence of positive parts
$\bigl( {\bf E}( \D g_{n})^{+} \bigr)$ is equi-integrable on $\DD$ and since it also converges in measure to ${\bf E}( \D g)^{+}$ it follows
from Vitali's convergence theorem that
$$
\int_{\DD} \! {\bf E}( \D g_{n})^{+} \, \dd m(z) \to \int_{\DD} \! {\bf E}( \D g)^{+} \, \dd m(z).
$$
For the sequence $\bigl( {\bf E}( \D g_{n})^{-} \bigr)$ of negative parts we can estimate using Fatou's lemma, and consequently we have shown that
$$
\fint_{\DD} \! {\bf E}( \D g) \, \dd m(z) \geq {\bf E}(A).
$$
Because ${\bf E}$ is locally Lipschitz we can take $r \to 1^{+}$ to conclude the proof.
\end{proof}

\begin{remark}
The interested reader will see that the proof of Proposition \ref{superharmonic} can give slightly more general results.
We also note that the argument leading to the equi-integrability of the sequence $\bigl( \log (1+1/J_{g_n}) \bigr)$ can be upgraded
to give an Orlicz space version of the estimates in \cite{KO} and we will present that elsewhere \cite{AFGKKtoappear}.
\end{remark}
  
We now give a larger, more general family of examples,  with different symmetries.  In order to motivate these examples we recall that
the profound implications of the restricted quasiconvexity of the Burkholder functional  on the higher integrability theory of quasiconformal
maps led the authors of \cite{AIPS15b} to define and investigate a more general family of functionals, so-called complex Burkholder functionals.
These functionals describe not only the integrability properties of quasiconformal mappings, loosely speaking corresponding to the stretch of
such maps, but also the rotation of such maps. 
While its definition is a bit cumbersome it is very natural from the quasiconformal view point, and we now describe it in some detail.

It was discovered in  \cite{AIPS12,AIPS15b} that a fruitful way to understand the Burkholder functional is to see $\int_{\DD} \B_p(\D f)\,\dd m(z)$ as a weighted
$\LL^p$ norm of $\D f$, where the weight depends on $p$ and $f$:  indeed, first recall that any $K$-quasiconformal map is a $\WW^{1,2}_{\loc}$-solution to the Beltrami equation
$$
f_{\overline z} = \mu(z) f_z, 
$$
where the complex dilatation $\mu = \mu_f$ satisfies $|\mu_f(z)| \leq \frac{K-1}{K+1} < 1$ for a.e. $z$. Further, note that for such a map we may write
$$
\B_{p}(\D f)=\Big(\frac{ p |\mu_f|}{1+|\mu_f|}-1\Big)\Bigl|(1+|\mu_f|) f_z\Bigr|^p.
$$
In order to generalize this definition to describe the optimal complex powers for quasiconformal maps we take the exponent $\mathtt{p}$
to be a complex number which satisfies
\begin{equation}
\label{eq:prange}
1\leq |\mathtt{p}-1|, \qquad \mathtt p\neq 0,
\end{equation}
 and we define $\beta=\beta(\mathtt{p})$ as the complex number determined uniquely through the equations
\begin{equation*}
\label{eq:defbeta}
|\beta|+|\beta-2|=2|\mathtt p-1| \quad \text{ and } \quad \Re(\beta/\mathtt{p})=1.
\end{equation*}
Then we set
\begin{equation*}
\label{eq:BpC}
\B_{\mathtt{p}}(A)=\Big( \frac{\mathtt{p}\eta |\mu_A|}{1+\eta |\mu_A|}-1\Big)|(a_++\eta |\mu_A|a_+)^\beta|.
\end{equation*}
Here $\eta$ depends on $|\mu_A|$ and $\mathtt{p}$ through the implicit relations
\begin{equation*}
\label{eq:defrho}
|\eta(z)|=1 \quad \text{ and } \quad \tp{arg}(\mathtt{p} \eta)=\tp{arg}(1+\eta|\mu_A|).
\end{equation*}
We readily see that the structure of the complex Burkholder functional is similar to that of $\B_p$ but that it contains complex powers of $f_z$.
Thus, as mentioned above,  its quasiconvexity relates not only to integrabilty issues but also to how fast quasiconformal maps can wind a line into a spiral.
Concerning the structure of the functional we note that, for a complex parameter $\beta$ as above, the functional is $(\Re \beta)$-homogeneous and the
group of symmetries is naturally described by the logarithmic spiral $S_\beta=\{ z: |z^\beta|=1\}$.  We invite the reader to find a function $b\colon \mathbb{R} \to \C$ such that
$\B_{\mathtt{p}}(A)= |a_+^\beta| b(|\mu_A|)$. In a companion paper \cite{AFGKKtoappear} we prove that the full family of local complex Burkholder functionals is principal
quasiconvex. 

\begin{theorem}
Let $\B_{\mathtt{p}}$ be the $\mathtt{p}$-Burkholder functional with the complex exponent $\mathtt{p}$ satisfying condition \eqref{eq:prange}.  Then
$\bigl\{ \B_{\mathtt{p}} \leq 0 \bigr\} = Q_{2}\left( \frac{| \mathtt{p}-1|+1}{| \mathtt{p}-1|-1} \right)$ and
$$
\B_{\mathtt{p}}^{\loc}(A) \equiv \left\{ \begin{array}{cl}
  \B_{\mathtt{p}}(A) & \mbox{ if } \B_{\mathtt{p}}(A) \leq 0,\\
  +\infty & \mbox{ if } \B_{\mathtt{p}}(A) > 0,
\end{array}
\right.
$$
is $\WW^{1,1}$-principal quasiconvex.
\end{theorem}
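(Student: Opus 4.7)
The plan is to adapt the template of Theorem~\ref{thm:examples}, whose proof in turn is the same as that of $\B_p$ in \cite{AFGKKpreprint}: the key is to verify that the structural property driving that argument — the complex $p$-homogeneity $\B_p(\alpha A) = |\alpha|^p \B_p(A)$ — has a natural analog for $\B_{\mathtt p}$, and that the appropriate Dirichlet-type quasiconvexity inequality on the well is available (this being the content of the companion paper).

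First I would identify $\{\B_{\mathtt p} \leq 0\}$ with $Q_2(K)$. The sign of $\B_{\mathtt p}(A)$ is governed by the real bracket $\frac{\mathtt p \eta |\mu_A|}{1+\eta|\mu_A|} - 1$; using the defining relations for $\eta$ together with $|\beta| + |\beta - 2| = 2|\mathtt p - 1|$ and $\re(\beta/\mathtt p)=1$, one checks that this bracket is non-positive precisely when $|\mu_A| \leq \frac{1}{|\mathtt p - 1|}$, which via $K_A = (1+|\mu_A|)/(1-|\mu_A|)$ translates into $A \in Q_2(K)$ with $K = \frac{|\mathtt p - 1|+1}{|\mathtt p - 1| - 1}$.

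Next, the complex homogeneity: for $\alpha \in \C \setminus \{0\}$ one has $\mu_{\alpha A} = \mu_A$, so $\eta$ is unchanged, while $a_+ \mapsto \alpha a_+$. Hence
\[
\B_{\mathtt p}(\alpha A) = |\alpha^\beta|\,\B_{\mathtt p}(A),
\]
which plays exactly the role that $|\alpha|^p$ plays in the real Burkholder case; combined with the isotropy of $\B_{\mathtt p}$ under the logarithmic spiral subgroup $S_\beta$, this provides the complete package of symmetries needed to run the argument of \cite{AFGKKpreprint}. The companion paper is invoked to supply the basic quasiconvexity inequality
\[
\dashint_{\DD}\bigl(\B_{\mathtt p}(A+\D\phi) - \B_{\mathtt p}(A)\bigr)\,\dd m(z) \geq 0
\]
for $A \in Q_2(K)$ and $\phi \in \WW^{1,2}_0(\DD)$ with $A+\D\phi \in Q_2(K)$ a.e. Given a principal map $f \in \WW^{1,2}_{\loc}(\C)$ we may assume $\D f \in Q_2(K)$ a.e.\ (otherwise the integrand on the right-hand side is $+\infty$ on a set of positive measure and the inequality is trivial). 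Using the complex homogeneity and the $S_\beta$-isotropy, one normalizes $A_f$ to a convenient reference matrix, and then extends the exterior conformal tail $\sum_{j\geq 2} b_j/z^j$ of $f$ on $\C\setminus\overline{\DD}$ into a compactly supported $\WW^{1,2}_0$ perturbation via a standard annular cut-off, using that conformal maps preserve the $K$-quasiconformal well; applying the quasiconvexity inequality above to this perturbation and passing the annulus to infinity yields the principal quasiconvexity inequality for $\mathcal{B}$.

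The main obstacle is the coupling in $|\alpha^\beta| = |\alpha|^{\re\beta}\,e^{-\im\beta\,\arg\alpha}$ between scaling and rotation: unlike the real case, the homogeneity factor depends on $\arg\alpha$, so the normalization of $A_f$ must be chosen within an appropriate coset of the spiral group $S_\beta$ in order that $|\alpha^\beta|$ is positive and cancels correctly between the two sides of Jensen's inequality. Once this bookkeeping is set up, no new analytic ingredient beyond that used for $\B_p$ is required, and the annular cut-off step is controlled by the fact that the Laurent tail $\sum b_j/z^j$ lies in $\WW^{1,2}$ on every annulus away from the origin.
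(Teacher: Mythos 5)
The paper does not actually prove this theorem: it states it and then explicitly refers the reader to a companion paper. So there is no in-text proof to compare against, and what you have written is best judged on its own terms as a sketch of what the companion argument presumably does.

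Your analysis of the set $\{\B_{\mathtt p}\le 0\}$ looks right: by construction the quantity $\mathtt p\eta/(1+\eta|\mu_A|)$ is real and positive (that is what the phase condition $\arg(\mathtt p\eta)=\arg(1+\eta|\mu_A|)$ is engineered for), so the sign of $\B_{\mathtt p}(A)$ is indeed controlled by the real bracket, and the threshold $|\mu_A|\le 1/|\mathtt p-1|$ translates to $K_A\le(|\mathtt p-1|+1)/(|\mathtt p-1|-1)$. You also correctly identify the structural replacement for the real case's key property $\mathbf F(\alpha A)=|\alpha|^p\mathbf F(A)$: here it becomes $\B_{\mathtt p}(\alpha A)=|\alpha^\beta|\,\B_{\mathtt p}(A)$, with the caveat you note about the branch/argument dependence in $|\alpha^\beta|=|\alpha|^{\Re\beta}e^{-\Im\beta\arg\alpha}$.

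The genuine gap is in the passage from the Dirichlet-type quasiconvexity on $Q_2(K)$ to the Jensen inequality for principal maps. You propose to cut off the Laurent tail $\phi(z)=\sum_{j\ge 2}b_j/z^j$ on a large annulus and pass the annulus to infinity. The problem is that what a principal map $f$ approaches at infinity is its \emph{conformal} part $b_0z$, not the matrix $A_f$: indeed $f(z)-A_f(z)=b_1(1/z-\bar z)+\phi(z)$, and the $-b_1\bar z$ term grows linearly, so $f-A_f$ does not become a small $\WW^{1,2}_0$ perturbation on an exhausting sequence of disks, and rescaling only produces a Jensen inequality at the conformal matrix $b_0\,\mathrm{Id}$, which is information-free. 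The inequality $\B_{\mathtt p}(A_f)\le\fint_{\DD}\B_{\mathtt p}(\D f)$ therefore does not follow from Dirichlet quasiconvexity by a standard cut-off; it is precisely the Burkholder \emph{area} inequality, which is proved in \cite{AFGKKpreprint} (and, for the complex case, in the companion paper) by a substantially different argument that exploits Stoilow factorization and sharp estimates for holomorphic averages rather than a truncation of the exterior conformal tail. Your sketch therefore identifies the right structural ingredients (complex homogeneity, sign of the bracket, spiral-group invariance), but the step that actually produces the principal quasiconvexity inequality is not a cut-off argument and cannot be obtained by ``passing the annulus to infinity''.
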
 
We also intend to discuss semicontinuity and existence results for the complex Burkholder functionals and the corresponding
adapted versions of Proposition~\ref{superharmonic} in \cite{AFGKKtoappear}.

\section{Proofs of semicontinuity and existence results}\label{sec:Proofs}



\subsection{Lower semicontinuity along sequences with asymptotically finite distortion}
We begin by giving the proof of a more general version of Theorem \ref{asfinite-lsc1}. 
The strategy is as follows: first we apply a general semicontinuity result for Young measures \cite{Balder}.  Then we localize and show that our assumed growth conditions and
principal quasiconvexity imply that the integrands satisfy the Jensen inequality for the corresponding homogeneous gradient Young measures. 
\begin{theorem}\label{asfinite-lsc}
Let $\mathbf{E} \colon \Omega \times \R^{2 \times 2} \to \R \cup \{ +\infty \}$ be a normal integrand. Assume that  
\begin{equation}\label{mysterious}
\bigl| \mathbf{E}(z,A) \bigr| \leq \mathbf{C}(z,K_{A})\bigl( |A|^{2}+1 \bigr) \quad \forall (z,A) \in \Omega \times  \M_{+} ,
\end{equation}
where $\mathbf{C} \colon \Omega \times [1,+\infty ) \to [1,+\infty )$ is Borel measurable and $\mathbf{C}(z, \cdot ) \colon [1,\infty ) \to [1,\infty )$ is an increasing
function for each $z \in \Omega$, and that for each $z \in \Omega$, $\mathbf{E}(z, \cdot )$ is $\WW^{1,2}$-principal quasiconvex.

Let $( u_{j} )$ be a sequence in $\WW^{1,2}_{\loc}( \Omega )$ and suppose that the maps have asymptotically finite distortion on a measurable subset $\A$ of $\Omega$:
\begin{equation}\label{asfindist}
K \equiv \limsup_{j \to \infty} \mathbf{K}( \D u_{j}) < +\infty \quad \Leb^{2} \mbox{ a.e.~in } \A .
\end{equation}
Then, if $u_{j} \weak u$ in $\WW^{1,2}_{\loc}( \Omega )$ and
\begin{equation}\label{equiint-energy}
\bigl( \mathbf{E}( \cdot ,\D u_{j})^{-} \bigr) \, \mbox{ is equi-integrable on } \A ,
\end{equation}
we have
$$
\liminf_{j \to \infty} \int_{\A} \! \mathbf{E}( \cdot , \D u_{j}) \, \dd m(z) \geq \int_{\A} \! \mathbf{E}( \cdot ,\D u) \, \dd m(z).
$$
\end{theorem}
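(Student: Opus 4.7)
The plan is to implement the Young-measure strategy indicated in the introduction. After extracting a subsequence (not relabelled), I may assume that $(\D u_j)$ generates a gradient Young measure $\nu = (\nu_z)_{z \in \Omega}$ with barycentre $\langle \nu_z, \id \rangle = \D u(z)$. Since $\mathbf{E}(z, \cdot)$ is lower semicontinuous on $\M_+$ and the negative parts $\mathbf{E}(\cdot, \D u_j)^-$ are equi-integrable on $\A$ by \eqref{equiint-energy}, a Balder-type semicontinuity theorem (see \cite{Balder}) yields
\[
\liminf_{j \to \infty} \int_{\A} \! \mathbf{E}(z, \D u_j(z)) \, \dd m(z) \geq \int_{\A} \! \langle \nu_z, \mathbf{E}(z, \cdot) \rangle \, \dd m(z),
\]
so the theorem reduces to the pointwise Jensen-type inequality $\langle \nu_z, \mathbf{E}(z, \cdot) \rangle \geq \mathbf{E}(z, \D u(z))$ for a.e.\ $z \in \A$.

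To exploit the asymptotic finite distortion uniformly I would introduce the Egorov-type decomposition of $\A$ via
\[
\A_{K_0, N} \equiv \bigl\{ z \in \A : K_{\D u_j}(z) \leq K_0 \text{ for all } j \geq N \bigr\},
\]
which exhaust $\A$ up to a null set as $K_0, N \to \infty$. It thus suffices to prove the pointwise Jensen inequality at almost every $z_0 \in \A_{K_0, N}$ that is simultaneously a density point of $\A_{K_0, N}$ and a Lebesgue point of $\D u$ and of $z \mapsto \langle \nu_z, \mathbf{E}(z, \cdot) \rangle$. At such a $z_0$ a blow-up at scale $r \to 0$ of $(u_j)$, followed by a diagonal extraction in $r$ and $j$, produces a homogeneous gradient Young measure $\sigma$ on $\DD$ with constant barycentre $A_0 \equiv \D u(z_0)$ such that $\langle \sigma, \mathbf{E}(z_0, \cdot) \rangle = \langle \nu_{z_0}, \mathbf{E}(z_0, \cdot) \rangle$. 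The density property of $\A_{K_0, N}$ combined with the uniform distortion bound on that set forces $\sigma$ to be supported in the quasiconformal well $Q_2(K_0)$.

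Now Theorem~\ref{thm:YMqc} applies: since $s = 2$ always lies in $\bigl( \tfrac{2K_0}{K_0+1}, \tfrac{2K_0}{K_0-1} \bigr)$ for $K_0 > 1$, $\sigma$ is generated by a sequence of normalized $K_0$-quasiconformal principal maps $f_k \colon \C \to \C$ with $A_{f_k} = A_0$. On $Q_2(K_0)$ the growth bound \eqref{mysterious} reduces to $|\mathbf{E}(z_0, A)| \leq \mathbf{C}(z_0, K_0)(|A|^2 + 1)$, which together with the uniform $\WW^{1,2}(\DD)$-bound on normalized principal maps gives equi-integrability of $\mathbf{E}(z_0, \D f_k)$ on $\DD$. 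Principal quasiconvexity applied to each $f_k$ then yields
\[
\langle \nu_{z_0}, \mathbf{E}(z_0, \cdot) \rangle = \lim_{k \to \infty} \fint_{\DD} \! \mathbf{E}(z_0, \D f_k(z)) \, \dd m(z) \geq \mathbf{E}(z_0, A_0),
\]
which is the required pointwise inequality; letting $K_0, N \to \infty$ finishes the proof.

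The main obstacle I anticipate lies in the localization step: one must verify that the blow-up sequence at a density point of $\A_{K_0, N}$ produces a homogeneous Young measure concentrated in $Q_2(K_0)$, despite the underlying sequence having only an $\LL^2$-bound and potentially unbounded distortion off the Egorov set. The $0$-$1$ law from \cite{AstalaFaraco02}, alluded to in the introduction, is the natural tool to rule out pathological mass contributions to $\sigma$ arising from points where $K_{\D u_j}$ is large. A secondary subtlety is the simultaneous identification of $\langle \sigma, \mathbf{E}(z_0, \cdot) \rangle$ with $\langle \nu_{z_0}, \mathbf{E}(z_0, \cdot) \rangle$ at a.e.\ $z_0$, which requires $z_0$ to be a Lebesgue point of several objects at once and a careful diagonal extraction compatible with the Egorov decomposition.
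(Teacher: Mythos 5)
Your proposal follows essentially the same route as the paper: Balder's lower semicontinuity theorem reduces the claim to a pointwise Jensen inequality for the generated Young measure, a localization argument at density/Lebesgue points combined with the asymptotic distortion bound shows the blown-up measures are homogeneous $\WW^{1,2}$ gradient Young measures supported in a quasiconformal well, Stoilow factorization (via Theorem~\ref{thm:YMqc}) produces generating sequences of principal maps, and principal quasiconvexity closes the argument. The Egorov-style exhaustion $\A_{K_0,N}$ you propose plays the same role as the Lusin sets $G_\varepsilon$ and the sublevel sets $G_j^a$ used in the paper's Proposition~\ref{prop:localization}; both give the uniform distortion control needed to confine the support of the localized measure.

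There is, however, one genuine gap. You pass to the limit
$\langle \nu_{z_0}, \mathbf{E}(z_0, \cdot) \rangle = \lim_{k\to\infty} \fint_{\DD} \mathbf{E}(z_0, \D f_k)\,\dd m(z)$,
which requires $\mathbf{E}(z_0,\cdot)$ to be continuous $\nu_{z_0}$-almost everywhere in addition to equi-integrability of $\mathbf{E}(z_0,\D f_k)$. Since $\mathbf{E}(z_0,\cdot)$ is real-valued and principal quasiconvex (hence rank-one convex) on $\M_+$, it is locally Lipschitz on $\M_+\setminus\{0\}$ but is only assumed \emph{lower} semicontinuous at the origin. Thus the identification fails a priori if $\nu_{z_0}(\{0\})>0$. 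The $0$--$1$ law of \cite{AstalaFaraco02} is precisely what rescues this: it forces either $\nu_{z_0}(\{0\})=0$ (so continuity holds $\nu_{z_0}$-a.e.\ and the above argument applies) or $\nu_{z_0}=\delta_0$, in which case the Jensen inequality $\langle\delta_0,\mathbf{E}(z_0,\cdot)\rangle \geq \mathbf{E}(z_0,0)$ holds trivially (or via Balder again, as in the paper). You do invoke the $0$--$1$ law, but you frame it as controlling the support of $\sigma$ rather than resolving this continuity issue; in the paper's proof the support constraint is obtained directly from the Lusin/density argument without it, and the $0$--$1$ law enters solely to dispatch the possible atom at the origin. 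Two smaller points: the generating principal maps only satisfy $A_{f_k}\to A_0$, not $A_{f_k}=A_0$, so you should combine $\fint_\DD \mathbf{E}(z_0,\D f_k) \geq \mathbf{E}(z_0,A_{f_k})$ with lower semicontinuity of $\mathbf{E}(z_0,\cdot)$ at $A_0$ when passing to the limit in the right-hand side; and a Balder-type theorem should be applied to the lower semicontinuous envelope $\mathbf{E}_*$ (agreeing with $\mathbf{E}$ on $\M_+$, hence on the supports of the $\nu_z$), since $\mathbf{E}$ need not be lower semicontinuous across $\partial\M_+$.
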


\begin{remark}\label{asfinite-bitelsc}
The conclusion of Theorem \ref{asfinite-lsc} fails in general without the assumption \eqref{equiint-energy}, but can be reinstated
if formulated in terms of biting convergence as in \cite{IwaniecGehring} and \cite{FuMoSb}. Under the assumptions of Theorem \ref{asfinite-lsc}
except \eqref{equiint-energy} we have that each subsequence of $( u_j )$ admits a further subsequence $(u_{j_k})$ such that
$$
b \ast \! \lim_{k \to \infty} \mathbf{E}( \cdot ,\D u_{j_k}) \geq \mathbf{E}( \cdot ,\D u) \quad \Leb^2 \mbox{ a.e.~in } \A .
$$
In fact, this is a consequence of Jensen's inequality for $\mathbf{E}$ and the Young measures generated by subsequences of $( \D u_{j_k} )$.
That they hold is part of our proof for Theorem \ref{asfinite-lsc} and we invite the reader to check that their validity do not hinge
on the equi-integrability assumption \eqref{equiint-energy}.
Indeed, if $( \nu_z )_{z \in \Omega}$ is a Young measures generated by a subsequence of $( \D u_{j_k})$, then it is not difficult to see that
$$
b \ast \! \lim_{k \to \infty} \mathbf{E}( \cdot , \D u_{j_k}) \geq \int_{\M} \! \mathbf{E}(\cdot , A) \, \dd \nu_{\cdot}(A) \geq \mathbf{E}( \cdot , \D u)
\quad \Leb^2 \mbox{ a.e.~in } \A .
$$
\end{remark}
The key to the proof is the following localization principle for the Young measures that are generated by the derivatives of maps in a weakly
converging sequence in $\WW^{1,2}_{\loc}$ satisfying the asymptotic distortion condition \eqref{asfindist}. We emphasize that in the subsequent statements
and proofs we consider all functions and maps in terms of their precise representatives.

\begin{proposition}\label{prop:localization}
Let $( u_{j} )$ be a bounded sequence in $\WW^{1,2}_{\loc}( \Omega )$ of maps satisfying \eqref{asfindist}. If $( \D u_{j})$ generates the Young
measure $( \nu_{z} )_{z \in \Omega}$, then for $\Leb^2$ almost all $z \in \A$ the measure $\nu_z$ is a homogeneous $\WW^{1,2}$ gradient Young
measure with support in $Q_{2}\bigl( K(z) \bigr)$.
\end{proposition}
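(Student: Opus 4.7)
The proposition makes two assertions about $\nu_z$ for a.e.\ $z\in\A$: (a) it is a homogeneous $\WW^{1,2}$ gradient Young measure, and (b) its support lies in $Q_2(K(z))$. My plan is to establish (a) and (b) separately and intersect the two full-measure sets. Assertion (a) is structural and does not use the asymptotic distortion hypothesis; (b) is where \eqref{asfindist} enters.

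For (a), I would invoke the standard localization theorem for $\WW^{1,p}$ gradient Young measures (Kinderlehrer--Pedregal, see \cite{MullerBook99,PedregalBook97,Rindler}). At $\Leb^2$-a.e.\ point $z_0\in\Omega$, blow-ups $u_j^r(y)\equiv r^{-1}(u_j(z_0+ry)-u_j(z_0))$ combined with a diagonal extraction along $r\to 0$ and $j\to\infty$ produce a sequence in $\WW^{1,2}(\DD)$ with fixed affine boundary values $A_0 z$, where $A_0=\bar\nu_{z_0}$, whose gradients generate the homogeneous measure $\nu_{z_0}$. The local $\WW^{1,2}$-boundedness of $(u_j)$ is precisely what keeps the blow-ups bounded in $\WW^{1,2}(\DD)$.

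For (b), I would use an Egorov-type stratification tailored to the limsup in \eqref{asfindist}. For each $M\in\N$ and rational $\epsilon>0$ set
\begin{equation*}
\A_n^{M,\epsilon}\equiv\{z\in\A:\,K_{u_k}(z)\leq M+\epsilon\text{ for every }k\geq n\}.
\end{equation*}
These sets are nested in $n$, with $\bigcup_n \A_n^{M,\epsilon}=\{z\in\A:K(z)\leq M+\epsilon\}$ by the very definition of limsup. On $\A_n^{M,\epsilon}$ one has $\D u_k(z)\in Q_2(M+\epsilon)$ for every $k\geq n$; testing the Young measure against any bounded continuous $\phi\ge 0$ vanishing on the closed set $Q_2(M+\epsilon)$ then gives
\begin{equation*}
\int_{\A_n^{M,\epsilon}}\!\!\int_\M \phi(A)\,\dd\nu_z(A)\,\dd m(z)=\lim_{k\to\infty}\int_{\A_n^{M,\epsilon}}\phi(\D u_k(z))\,\dd m(z)=0,
\end{equation*}
so $\phi=0$ holds $\nu_z$-a.e.\ for $\Leb^2$-a.e.\ $z\in \A_n^{M,\epsilon}$. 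Running $\phi$ through a countable dense family of such cutoffs fixes $\mathrm{supp}(\nu_z)\subset Q_2(M+\epsilon)$ on $\A_n^{M,\epsilon}$. Taking unions in $n$ and intersections over countably many $(M,\epsilon)$ with $M+\epsilon\searrow K(z)$, and using the identity $\bigcap_{K'>K(z)}Q_2(K')=Q_2(K(z))$, yields (b) for a.e.\ $z\in\A$.

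The main subtlety is that \eqref{asfindist} is merely a pointwise-in-$z$ condition, with no uniform-in-$j$ bound on the distortions; without an intermediate device one cannot directly conclude anything about $\mathrm{supp}(\nu_z)$. The Egorov-type stratification $\A_n^{M,\epsilon}$ is the natural bridge, converting pointwise limsup control into a uniform bound on an ascending family of subsets, which can then be fed into the Young measure representation. Closedness (and cone structure) of $Q_2(M+\epsilon)$ ensures that bounded continuous cutoffs suffice, with no need for growth-controlled test functions, a point that is crucial since the sequence $(\D u_j)$ is only $\LL^2_{\loc}$-bounded rather than uniformly bounded.
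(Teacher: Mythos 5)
Your proof is correct and takes a genuinely different route from the paper for the support statement (b). Both the paper and you handle (a) via the standard blow-up/localization principle; the paper writes it out explicitly (diagonalizing along $r\searrow 0$, $j\to\infty$ with a countable separating family of test functions and Poincar\'e to keep the blow-ups bounded in $\WW^{1,2}(\DD)$), while you invoke the result directly. The real divergence is in (b). The paper establishes the support condition \emph{inside} the blow-up: it applies Lusin's theorem to obtain a set $G\subset\A$ on which $K$ is finite and continuous, introduces the ascending sets $G_j^a=\{M_j<aK\}$ where $M_j=\sup_{s\geq j}K_{u_s}$, works at a density point $z_0$ of $G$, tracks the density of $(G_j-z_0)/r_j$ in $\DD$, and finally shows that the blown-up Young measure gives mass zero to the open set $\mathcal{E}_a=\{|A|^2>a^2K(z_0)\det A\}$, sending $a\searrow 1$. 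Your argument bypasses the blow-up and Lusin/density-point machinery entirely: the Egorov-type stratification $\A_n^{M,\epsilon}$ (closely related in spirit to the paper's $G_j^a$, but deployed on the original domain rather than at a blow-up point) converts the pointwise $\limsup$ control into a uniform tail bound on an exhausting family, and then bounded continuous cutoffs supported off the closed cone $Q_2(M+\epsilon)$, together with tightness of the Young measure ($\LL^2$-boundedness of $(\D u_j)$), give $\nu_z(\M\setminus Q_2(M+\epsilon))=0$ directly. Intersecting in $M,\epsilon$ via $\bigcap_{K'>K(z)}Q_2(K')=Q_2(K(z))$ finishes. This is a cleaner and more transparent path to (b), at the price of no longer producing the explicit blow-up sequence that the paper's proof yields as a by-product.

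One small inaccuracy worth flagging: the identity $\bigcup_n\A_n^{M,\epsilon}=\{K\leq M+\epsilon\}$ is not quite right, since $K(z)=M+\epsilon$ does not force $K_{u_k}(z)\leq M+\epsilon$ eventually; what is true is $\{K<M+\epsilon\}\subset\bigcup_n\A_n^{M,\epsilon}\subset\{K\leq M+\epsilon\}$. This does not affect the proof — the left inclusion already gives that the stratification covers $\Leb^2$-a.e.\ $z\in\A$ (since $K<\infty$ a.e.\ there), and the conclusion $\mathrm{supp}\,\nu_z\subset Q_2(K')$ for every $K'>K(z)$ is all that is used in the final intersection.
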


\begin{proof}
Put $M_{j}(z) \equiv \sup_{s \geq j} \mathbf{K}( \D u_s )$, $z \in \Omega$. Then $M_{j} \colon \Omega \to [1,+\infty ]$ is measurable and
\begin{equation}\label{decreasM}
M_{j}(z) \geq M_{j+1}(z) \searrow K(z) \, \mbox{ as } \, j \nearrow \infty \, \mbox{ pointwise in } \, z \in \Omega .
\end{equation}
We recall that $K(z)$ is defined at \eqref{asfindist}, where it is also assumed that $K(z) < +\infty$ holds $\Leb^2$ almost everywhere on $\A$.
Because $K$ is also clearly measurable it follows from Lusin's theorem that it is $\Leb^2$ quasi-continuous on $\A$: for each $\varepsilon > 0$
there exists a measurable subset $G = G_{\varepsilon} \subset \A$ such that $K$ is finite and continuous relative to $G$ and
$\Leb^{2}( \A \setminus G) < \varepsilon$.
Fix an $\varepsilon > 0$ and corresponding set $G=G_{\varepsilon}$. Fix $a>1$ and put $G_{j} = G_{j}^{a} \equiv \{ z \in G : \, M_{j}(z) < aK(z) \}$ and record that
$G_j$ are measurable sets that by virtue of \eqref{decreasM} form an ascending sequence that in the limit $j \to \infty$ exhaust $G$:
\begin{equation}\label{increasG}
G_{j} \subset G_{j+1} \, \mbox{ and } \bigcup_{j \in \N} G_{j} = G .
\end{equation}
We next note that the space $\CC_{0}( \DD \times \M )$ with the supremum norm is separable and that we may find countable
families $\mathcal{F}_1$ in $\CC_{c}^{1}( \DD )$ and $\mathcal{F}_2$ in $\CC_{c}^{1}( \M )$ such that the countable family
$\mathcal{F} \equiv \bigl\{ \eta \otimes \Phi : \, \eta \in \mathcal{F}_{1}, \, \Phi \in \mathcal{F}_{2} \bigr\}$ is total in $\CC_{0}( \DD \times \M )$.
Consequently, if $( \kappa^{i}_{z} )_{z \in \DD}$, $i=1,2$, are two Young measures on $\DD$ and
$$
\int_{\DD} \! \langle \kappa_{z}^{1}, \Phi \rangle \eta (z) \, \dd m(z) =\int_{\DD} \! \langle \kappa_{z}^{2}, \Phi \rangle \eta (z) \, \dd m(z)
$$
holds for all $\eta \otimes \Phi \in \mathcal{F}$, then the Young measures are the same: $\kappa_{z}^1 = \kappa_{z}^2$ for $\Leb^2$ almost all $z \in \DD$.

Because the sequence $( u_j )$ is bounded in $\WW^{1,2}_{\loc}( \Omega )$ the sequence of measures $( | \D u_{j}|^{2} \Leb^{2} \restrict \Omega )$ is bounded in
$\CC_{c}( \Omega )^{\ast}$ and hence by Banach-Alaoglu's compactness theorem admits a weak$\mbox{}^\ast$ converging subsequence there. Extracting such a subsequence
(that we for notational convenience do not relabel) we can assume that
\begin{equation}\label{weakstarlim}
| \D u_{j}|^{2} \Leb^{2} \restrict \Omega \wstar \lambda \, \mbox{ in } \, \CC_{c}( \Omega )^{\ast},
\end{equation}
where $\lambda$ is a positive Radon measure on $\Omega$. In particular we then have for each disk $\DD_{r}(z_{0})$ that is compactly contained in $\Omega$
that
$$
\limsup_{j \to \infty} \fint_{\DD_{r}(z_{0})} \! | \D u_{j}|^{2} \, \dd m(z) \leq \frac{\lambda \bigl( \overline{\DD_{r}(z_{0})}\bigr)}{\Leb^{2}\bigl( \DD_{r}(z_{0})\bigr)} ,
$$
and hence by Lebesgue's differentiation theorem that
\begin{equation}\label{l2bound}
\limsup_{r \searrow 0}\limsup_{j \to \infty} \fint_{\DD_{r}(z_{0})} \! | \D u_{j}|^{2} \, \dd m(z) \leq \frac{\dd \lambda}{\dd \Leb^{2}}(z_{0}) < +\infty 
\end{equation}
holds for $\Leb^2$ almost all $z_{0} \in \Omega$. We now identify a set of \textit{bad} points in $\Omega$ that we would like to avoid in the subsequent argument.
First recall the countable family $\mathcal{F}_2$, and note that for each $\Phi \in \mathcal{F}_2$ the function
$z \mapsto \langle \nu_{z},\Phi \rangle$ is an $\LL^{\infty}( \Omega )$ function, so that $\Leb^2$ almost all $z_{0} \in \Omega$ are Lebesgue points:
$$
\lim_{r \searrow 0} \fint_{\DD_{r}(z_{0})} \! \langle \nu_{z},\Phi \rangle \, \dd m(z) = \langle \nu_{z_{0}},\Phi \rangle \in \R .
$$
Let $N_{\Phi}$ denote the complement in $\Omega$ of this set of Lebesgue points. Next, let $N$ denote the set of points in $\Omega$, where \eqref{l2bound}
fails. Define $\mathcal{N} \equiv N \cup \bigcup_{\Phi \in \mathcal{F}_2} N_{\Phi}$ and record that $\Leb^{2}( \mathcal{N})=0$.

Define for $\DD_{r}(z_{0}) \Subset \Omega$ the maps
$$
v_{j,r}(z) \equiv \frac{u_{j}(z_{0}+rz)-(u_{j})_{z_{0},r}}{r}, \quad z \in \DD ,
$$
where $(u_{j})_{z_{0},r}$ is the integral mean of $u_j$ over $\DD_{r}(z_{0})$. Hereby $v_{j,r} \in \WW^{1,2}( \DD )$ and $(v_{j,r})_{0,1}=0$. Before the next display
we introduce some convenient notation for functions $\theta \colon \C \to \R$: the reflected function, $\tilde{\theta}$, is defined as $\tilde{\theta}(z) \equiv \theta (-z)$,
and the $\LL^1$-dilated function with factor $r>0$, $\theta_{r}$, is defined as $\theta_{r}(z) \equiv \theta \bigl( z/r \bigr)/r^2$. In these terms we
note that for each $z_{0} \in \Omega \setminus \mathcal{N}$ and $\eta \otimes \Phi \in \mathcal{F}$ the above Lebesgue point property and a standard result
about convolution yield (here $\eta$ is extended to $\C \setminus \DD$ by $0$):
\begin{eqnarray*}
  \lim_{r \searrow 0} \lim_{j \to \infty} \int_{\DD} \! \eta \Phi ( \D v_{j,r}) \, \dd m(z) &=& \lim_{r \searrow 0} \lim_{j \to \infty} \int_{\Omega} \! \tilde{\eta}_{r}(z_{0}-z)\Phi ( \D u_{j}) \, \dd m(z)\\
  &=& \lim_{r \searrow 0} \int_{\Omega} \! \tilde{\eta}_{r}(z_{0}-z) \langle \nu_{z},\Phi \rangle \, \dd m(z)\\
  &=& \langle \nu_{z_{0}}, \Phi \rangle \int_{\DD} \! \eta \, \dd m(z) .
\end{eqnarray*}
The bound \eqref{l2bound} translates to
$$
\limsup_{r \searrow 0} \limsup_{j \to \infty} \int_{\DD} \! | \D v_{j,r}|^{2} \, \dd m(z) \leq \pi \frac{\dd \lambda}{\dd \Leb^{2}}(z_{0}) < +\infty .
$$
Fix a point $z_{0} \in G \setminus \mathcal{N}$ that is also a density point for $G$. In addition to the above properties we now also have the following
double limit:
$$
\lim_{r \searrow 0} \lim_{j \to \infty} \frac{\Leb^{2}\bigl( \DD_{r}(z_{0}) \cap G_{j} \bigr)}{\Leb^{2}\bigl( \DD_{r}(z_{0})\bigr)} = 1.
$$
In order to choose a good null sequence $r_j \searrow 0$ we employ a diagonalization argument in connection with the above double limits: Hereby
we find for $v_{j} \equiv v_{j,r_{j}}$ that
\begin{equation}\label{c1}
\lim_{j \to \infty} \int_{\DD} \! \eta \Phi ( \D v_{j}) \, \dd m(z) = \int_{\DD} \! \eta \, \dd m(z) \langle \nu_{z_{0}},\Phi \rangle \quad \forall \, \eta \otimes \Phi \in \mathcal{F},
\end{equation}
\begin{equation}\label{c2}
\sup_{j \in \N} \int_{\DD} \! | \D v_{j}|^{2} \, \dd m(z) < +\infty
\end{equation}
and
\begin{equation}\label{c3}
\lim_{j \to \infty} \frac{\Leb^{2}\bigl( \DD_{r_j}(z_{0}) \cap G_{j} \bigr)}{\Leb^{2}\bigl( \DD_{r_j}(z_{0})\bigr)} = 1.
\end{equation}
Because $(v_{j})_{0,1}=0$ we deduce using Poincar\'{e}'s inequality and \eqref{c2} that the sequence $(v_{j})$ is bounded in $\WW^{1,2}( \DD )$. Therefore
any subsequence of $( \D v_j )$ admits a further subsequence that generates a Young measure. By \eqref{c1} this Young measure is the homogeneous
Young measure $( \nu_{z_0} )_{z \in \DD}$. A routine argument then shows that the full sequence $( \D v_j )$ generates the homogeneous Young measure
$( \nu_{z_0})_{z \in \DD}$. It is then easy to check that $( v_j )$ converges weakly in $\WW^{1,2}( \DD )$ to the linear map $\langle \nu_{z_0},\mathrm{Id} \rangle$.
In particular it follows that $( \nu_{z_0})_{z \in \DD}$ is a homogeneous $\WW^{1,2}$ gradient Young measure. We use \eqref{c3} to get information about the support
of $\nu_{z_0}$. Recall the parameter $a>1$ used in the definition of $G_j$ and put
$$
\mathcal{E}=\mathcal{E}_{a} \equiv \bigl\{ A \in \R^{2 \times 2}: \, |A|^{2} > a^{2}K(z_{0})\det A \bigr\} .
$$
This is clearly an open set in $\M$ and we note that for $z \in \DD$ we have $\D v_{j}(z) \in \mathcal{E}$ precisely when
$| \D u_{j} (z_{0}+r_{j}z) |^{2} > a^{2}K(z_{0})\det \D u_{j}(z_{0}+r_{j}z)$. Now recall that $K$ is continuous and finite relative to $G$ and so for some $j_a \in \N$
we have $K(z_{0}+r_{j}z) < aK(z_{0})$ for all $z \in \DD \cap \bigl( (G-z_{0})/r_{j} \bigr)$ for $j \geq j_a$. In view of the definition of $G_j$ we
therefore have when $j \geq j_a$ that
\begin{eqnarray}
  | \D u_{j}(z_{0}+r_{j}z)|^{2} &\leq& aK(z_{0}+r_{j}z) \det \D u_{j}(z_{0}+r_{j}z)\nonumber\\
  &\leq& a^{2}K(z_{0}) \det \D u_{j}(z_{0}+r_{j}z)\label{estionG}
\end{eqnarray}
holds for all $z \in \DD \cap \bigl( (G_{j}-z_{0})/r_{j} \bigr)$.
Because $\mathcal{E}$ is an open set its indicator function $\mathbf{1}_{\mathcal{E}}$ is lower semicontinuous, so we can find functions
$\Phi_{k} \in \CC_{c}( \R^{2 \times 2})$ such that $0 \leq \Phi_{k} \leq \Phi_{k+1} \nearrow \mathbf{1}_{\mathcal{E}}$ as $k \nearrow +\infty$. Now for each $k \in \N$,
$$
\liminf_{j \to \infty} \fint_{\DD} \! \mathbf{1}_{\mathcal{E}}( \D v_{j}) \, \dd m(z) \geq
\liminf_{j \to \infty} \fint_{\DD} \! \Phi_{k}( \D v_{j}) \, \dd m(z) = \langle \nu_{z_0} , \Phi_{k} \rangle
$$
and so taking $k \nearrow +\infty$ we get by virtue of Lebesgue's monotone convergence theorem
$$
\liminf_{j \to \infty} \fint_{\DD} \! \mathbf{1}_{\mathcal{E}}( \D v_{j}) \, \dd m(z) \geq \nu_{z_{0}}\bigl( \mathcal{E} \bigr) .
$$
To conclude the proof we estimate the left-hand side using \eqref{estionG} for $j \geq j_a$:
\begin{eqnarray*}
  \fint_{\DD} \! \mathbf{1}_{\mathcal{E}}( \D v_{j}) \, \dd m(z) &=& \frac{1}{\pi}\left( \int_{\DD \cap (G_{j}-z_{0})/r_{j}} + \int_{\DD \setminus (G_{j}-z_{0})/r_{j})} \right)
  \mathbf{1}_{\mathcal{E}}( \D v_{j}) \, \dd m(z)\\
  &\leq& \frac{\Leb^{2}\bigl( \DD_{r_{j}}(z_{0})\setminus G_j \bigr)}{\Leb^{2}\bigl( \DD_{r_{j}}(z_{0})\bigr)}
\end{eqnarray*}
and the latter tends by \eqref{c3} to $0$ as $j \to \infty$. Consequently, $\nu_{z_0}\bigl( \mathcal{E} \bigr) =0$. Recall that $\mathcal{E}=\mathcal{E}_{a}$
and that $a>1$ is arbitrary, so by monotone convergence we deduce that $\nu_{z_0}\bigl( \bigcup_{a > 1}\mathcal{E}_{a} \bigr)=0$.
But then $\nu_{z_0}$ is carried by $Q_{2}(K(z_{0}))$, and since $Q_{2}(K(z_{0}))$ is a closed set it must contain the support of $\nu_{z_0}$,
as required. The above holds for all $z_0 \in G \setminus \mathcal{N}$ that are density points of $G$, where we recall that $G$ was a measurable subset of
$\A$ such that $\Leb^{2}( \A \setminus G) < \varepsilon$. Here $\varepsilon > 0$ was arbitrary and it is then routine to conclude the proof.
\end{proof}

\begin{proof}[Proof of Theorem \ref{asfinite-lsc}.]
Because of \eqref{equiint-energy} we can without loss of generality assume, considering a suitable subsequence that we for notational convenience
do not relabel, that
$$
\lim_{j \to \infty}\int_{\A} \mathbf{E}( \cdot , \D u_{j}) \, \dd m(z) = \liminf_{j \to \infty}\int_{\A} \mathbf{E}( \cdot ,\D u_{j}) \, \dd m(z) \in \R
$$
and that the sequence $( \D u_{j})$ generates the Young measure $( \nu_{z})_{z \in \Omega}$. Here, for each $z \in \Omega$, the partial functional
$\mathbf{E}(z, \cdot )$ is lower semicontinuous on $\M_{+}$, and so it coincides with its lower semicontinuous envelope
$$
\mathbf{E}_{\ast}(z,A) \equiv \liminf_{A^{\prime} \to A} \mathbf{E}(z,A^{\prime})
$$
there. Note that $\mathbf{E}_{\ast} \colon \Omega \times \M \to [-\infty , +\infty ]$ is Borel measurable and that
$$
\lim_{j \to \infty}\int_{\Omega} \mathbf{E}( \cdot , \D u_{j}) \, \dd m(z) = \lim_{j \to \infty}\int_{\Omega} \mathbf{E}_{\ast}( \cdot , \D u_{j}) \, \dd m(z)  \in \R ,
$$
where again the negative parts $\bigl( \mathbf{E}_{\ast}( \cdot , \D u_{j})^{-} \bigr)$ form an equi-integrable sequence on $\A$.
We may then appeal to a general lower semicontinuity result from Young measure theory \cite{Balder} whereby
$$
\lim_{j \to \infty}\int_{\A} \mathbf{E}_{\ast}( \cdot ,\D u_{j}) \, \dd m(z) \geq \int_{\A}^{\ast} \! \langle \nu_{z}, \mathbf{E}_{\ast}(z, \cdot ) \rangle \, \dd m(z).
$$
It is at this stage we employ the localization principle from Proposition \ref{prop:localization}. Accordingly $\nu_z$ is, for $\Leb^2$ almost all $z \in \A$, a homogeneous
$\WW^{1,2}$ gradient Young measure supported on $Q_{2}\bigl( K(z) \bigr)$. Because $\mathbf{E}(z, \cdot )$ in particular is lower semicontinuous there we infer that
$$
\langle \nu_{z}, \mathbf{E}_{\ast}(z, \cdot ) \rangle = \langle \nu_{z}, \mathbf{E}(z, \cdot ) \rangle \quad \mbox{ for $\Leb^2$ a.e. } z \in \A .
$$
Moreover, $\mathbf{E}(z, \cdot )$ is principal quasiconvex and real-valued on $\M_{+}$, so must be rank-one convex and thus locally Lipschitz on $\M_{+}\setminus \{0\}$.
It need not be continuous at $0$ relative to $\M_{+}$, but is lower semicontinuous there by assumption.

Next, for $\mathscr L^2$ a.e.\ $z\in \mathbb A$, from \cite{AstalaFaraco02} we find a sequence of principal $K(z)$-quasiconformal maps
$f_{j} \colon \C \to \C$ such that $f_{j} \weak \langle \nu_{z}, \mathrm{Id} \rangle$ in $\WW^{1,p}( \DD )$ for each $p<2K(z)/(K(z)-1)$ and $( \D f_{j})$ generates $\nu_z$.
Recall that according to the 0-1 Law for homogeneous gradient Young measures from \cite{AstalaFaraco02} we have that either $\nu_{z}( \{ 0 \} ) =0$
or $\nu_{z}=\delta_{0}$. For measures $\nu_z$ of the former type, $\mathbf{E}(z, \cdot )$ is therefore continuous $\nu_z$ almost everywhere and so as
the quadratic growth \eqref{mysterious} ensures that the sequence $\bigl( \mathbf{E}( \D f_{j} )\bigr)$ is equiintegrable on $\DD$, we arrive at
$$
\fint_{\DD} \! \mathbf{E}(z, \D f_{j}(w)) \, \dd m(w) \to \langle \nu_{z},\mathbf{E}(z, \cdot ) \rangle .
$$
Since we also have that $f_j \weak A_f$ in $\WW^{1,2}( \DD )$, where $\ A_f = \langle \nu_{z},\mathrm{Id} \rangle$,
we get that $A_{f_j} \to A_{f}$.
It now follows directly from principal quasiconvexity  that $\langle \nu_{z},\mathbf{E}(z, \cdot ) \rangle \geq \mathbf{E}\bigl( z, \langle \nu_{z},\mathrm{Id} \rangle \bigr)$ and this
concludes the proof for measures with $\nu_{z}( \{ 0 \} )=0$.

For the other measures $\nu_z = \delta_{0}$ we simply use that $\mathbf{E}(z, \cdot )$ is lower semicontinuous at $0$, that the quadratic growth \eqref{mysterious}
still ensures that the sequence $\bigl( \mathbf{E}( \D f_{j} )\bigr)$ is equiintegrable on $\DD$ (in particular the negative parts) and then
the general lower semicontinuity result for Young measures \cite{Balder}. This concludes the proof.
\end{proof}

\subsection{Lower semicontinuity with integrable distortion}
In this subsection we give the proof of a more general version of Theorem \ref{thm:lscPQ}, which  follows the proof in \cite[Section 12]{AFGKKpreprint}
concerning the lower semicontinuity of the non polyconvex functional $\textbf{W}$ whose definition was recalled at \eqref{wmagic}. This functional is the Shield
transform of a functional introduced in \cite{Tade}, \cite{AIPS12} and it was also studied in the recent works \cite{Voss1,Voss2}.  As the result below shows, the crucial
feature of $\textbf{W}$ is its principal quasiconvexity.
 
\begin{theorem}\label{thm:intdistort-lsc}
Let $\mathbf{E} \colon \Omega \times \M \to \R \cup \{ +\infty \}$ be a Borel measurable functional such that $\mathbf{E}(z, \cdot )$ is $\WW^{1,1}$
principal quasiconvex for each $z \in \Omega$. Assume that there exist constants $C >0$ and $p \in [1,2)$ such that
\begin{equation}\label{eq:growthconditionsnom}
 \bigl| {\bf E}(z, A) \bigr| \leq C \max \bigl\{|A|^{p}, -\log(\det A), K_A \bigr\}+C 
\end{equation}
holds for all $(z,A) \in \Omega \times \M_{+}$, while ${\bf E}(z,A) = +\infty$ for $(z,A) \in \Omega \times \bigl( \R^{2 \times 2}\setminus \bigl( \M_{+} \cup \{ 0 \} \bigr) \bigr)$. 
Let $q>\frac{p}{2-p}$ be an exponent and $g \in \WW^{1,2}_{\loc}( \C )$ a homeomorphism with $K_{g} \in \LL^{q}_{\loc}( \C )$.
If $( u_j )$ is a sequence of homeomorphisms, $u_{j} \colon \Omega \to u_{j}( \Omega )$, $u_{j} \in g+\WW^{1,2}_{0}( \Omega )$ with $\sup_{j} \| K_{u_j} \|_{\LL^{q}( \Omega )} < +\infty$
and $u_{j} \weak u$ in $\WW^{1,2}_{\loc}( \Omega )$, then $u \in g+\WW^{1,2}_{0}( \Omega )$ is a homeomorphism with $K_{u} \in \LL^{q}( \Omega )$ and
$$
\liminf_{j \to \infty} \int_{\Omega} \! \mathbf{E}( \cdot , \D u_{j}) \, \dd m(z) \geq \int_{\Omega} \! \mathbf{E}( \cdot ,\D u) \, \dd m(z).
$$
\end{theorem}
Apart from the use of principal quasiconvexity the overall proof strategy is standard. The implementation is however not and it might have other
applications and so be of wider interest. First we show that the assumed growth conditions suffice to ensure equiintegrability of the energy densities. This relies on
\cite{KOR,KO} as presented in \cite[Proposition 12.8]{AFGKKpreprint}. Next, by Young measure theory the proof then reduces to obtaining Jensen's inequality
for a homogeneous gradient Young measure, which is the content of Proposition~\ref{thm:intdistort}.

\begin{proposition}\label{prop:equi} 
Let ${\bf E}$ satisfy \eqref{eq:growthconditionsnom} and let $(v_j)$ be a bounded sequence in $\WW^{1,2}(\Omega)$ with $\sup_j \|K_{u_j}\|_{\LL^q(\Omega)}<\infty$
for some $q>\frac{p}{2-p}$. Then the sequence $\bigl( {\bf E}( \cdot , \D v_j) \bigr)$ is equiintegrable in $\Omega$.
\end{proposition}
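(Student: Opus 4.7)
The plan is to reduce the equi-integrability of $\mathbf{E}(\D u_j)$ to equi-integrability of its negative part (the positive part is harmless since it eventually appears under a lower semicontinuity inequality, and on the effective domain $\mathbf{E}$ takes real values), and to control $\mathbf{E}^{-}(z,\D u_j(z))$ using the growth assumption \eqref{eq:growthconditions}. This yields
$$
\mathbf{E}^{-}(z,\D u_j(z)) \leq C_1\bigl(|\D u_j|^{2-\epsilon} + \bigl(-\log \det \D u_j\bigr)^{+} + K_{u_j}\bigr) + C_1 C_2,
$$
and since equi-integrability is preserved under sums and multiplication by constants, it suffices to prove it for each of the three terms on the right-hand side.

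For the first term, the sequence $(\D u_j)$ is bounded in $\LL^{2}(\Omega)$, so $\bigl(|\D u_j|^{2-\epsilon}\bigr)$ is bounded in $\LL^{2/(2-\epsilon)}(\Omega)$ with exponent strictly larger than $1$; the de la Vall\'ee Poussin criterion then gives equi-integrability. Exactly the same argument, applied to the assumption $\sup_j \|K_{u_j}\|_{\LL^{q}(\Omega)}<\infty$ with $q>1$, handles the third term $K_{u_j}$.

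The main obstacle is the logarithmic term $\bigl(-\log \det \D u_j\bigr)^{+}$, for which mere $\LL^1$-boundedness is not enough. Here I would appeal to the improved integrability results of Koskela--Onninen--Rajala \cite{KOR} and Koskela--Onninen \cite{KO}, as implemented in \cite[Prop.~12.8]{AFGKKpreprint}. The input is threefold: (i) $u_j$ has finite distortion with $K_{u_j}\in \LL^q$, $q>1$, and is bounded in $\WW^{1,2}(\Omega)$; (ii) the common boundary datum $g$ is a global homeomorphism with $K_g\in \LL^1(\Omega)$, so that $u_j\in g+\WW^{1,2}_0(\Omega)$ inherits enough structural rigidity (in particular, via Iwaniec--\v{S}ver\'ak, each $u_j$ admits a Stoilow factorization and its Jacobian does not vanish on a set of positive measure); (iii) from $K_{u_j} J_{u_j}=|\D u_j|^2$ one obtains the pointwise bound $-\log J_{u_j}\leq \log K_{u_j} - \log |\D u_j|^2$, which combined with the improved integrability from \cite{KOR,KO} (essentially $\log(e+K_{u_j})$ lying in a Zygmund-type space strictly finer than $\LL^1$, thanks to $q>1$) and with a lower bound on the average of $\log J_{u_j}$ coming from the fixed boundary datum $g$ via the area formula, yields equi-integrability of $\bigl(-\log J_{u_j}\bigr)^{+}$.

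The delicate point is step (iii): the global control must come from the Dirichlet condition, because $\log J$ is not controlled pointwise by $K$ alone. Once the three contributions are equi-integrable, the bound on $\mathbf{E}^{-}(\cdot, \D u_j)$ transfers the property to $\mathbf{E}(\cdot, \D u_j)$, concluding the proof.
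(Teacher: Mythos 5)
Your overall strategy matches the paper's: bound $\mathbf{E}^-$ via \eqref{eq:growthconditions}, then show each of $|\D u_j|^{2-\epsilon}$, $(-\log J_{u_j})^{+}$, $K_{u_j}$ is equi-integrable. The first and third follow exactly as you say, by de la Vall\'ee Poussin, from boundedness in $\LL^{2/(2-\epsilon)}$ and $\LL^q$ with exponents strictly above $1$. The paper and you agree that the log term is the crux and that the Koskela--Onninen(--Rajala) results, packaged as \cite[Proposition 12.8]{AFGKKpreprint}, are the key input.

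Where you differ — and where there is a genuine gap — is in the decomposition you use to reach Proposition 12.8. The relation $K_{u_j}J_{u_j}=|\D u_j|^2$ gives the \emph{identity} $-\log J_{u_j}=\log K_{u_j}-\log|\D u_j|^2$, not a one-sided bound, and the term $-\log|\D u_j|^2$ is large and positive precisely on the set where $|\D u_j|$ is small, which is not controlled by any of your hypotheses. Appealing to a lower bound on the \emph{average} of $\log J_{u_j}$ from the boundary datum cannot repair this: equi-integrability is a statement about uniform smallness of integrals over sets of small measure, and a bound on $\int_\Omega \log J_{u_j}$ does not rule out concentration of $(-\log J_{u_j})^+$ on shrinking sets.

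The paper instead uses the pointwise inequality
$$
\bigl|\log J_{u_j}\bigr|\;\le\;\log\Bigl(e+\frac{1}{J_{u_j}}\Bigr)+J_{u_j}^{1/2}.
$$
The second term is dominated by $|\D u_j|\in\LL^{2}(\Omega)$, hence equi-integrable. The first term is exactly the quantity treated by \cite[Proposition 12.8]{AFGKKpreprint} (via \cite{KOR,KO}), which gives equi-integrability of $\log(e+1/J_{u_j})$ under $K_{u_j}\in\LL^q$, $q>1$, with the fixed Sobolev boundary datum. So you identified the right external lemma, but the intermediate pointwise estimate you wrote does not feed into it; you need the splitting above, which cleanly isolates the small-Jacobian behaviour (handled by KOR/KO) from the large-Jacobian behaviour (handled by the $\WW^{1,2}$ bound).
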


\begin{proof}
The assumptions imply that both $|\D v_j|^{p}$ and $K_{v_j}$ are equiintegrable.
In order to see that $\log J_{v_j}$ is equiintegrable too we invoke the pointwise estimate:
\begin{equation}\label{eq:elementaryestimatelog}
|\log(J_{v_j})| \leq  \log\left(e+\frac{1}{J_{v_j}}\right) + (J_{v_j})^{\frac 1 2}.
\end{equation}
Here the first term on the right-hand side is equiintegrable by virtue of \cite[Proposition 12.8]{AFGKKpreprint}, while equiintegrability of the second follows from
the assumed $\WW^{1,2}$ bound.
\end{proof}

\begin{proof}[Proof of Theorem~\ref{thm:intdistort-lsc}]
It is well-known that the limit $u \in g+\WW^{1,2}_{0}( \Omega )$ is a homeomorphism with distortion $K_{u} \in \LL^{q}( \Omega )$, see \cite[Ch.~21]{AIMbook}.
In view of Proposition \ref{prop:equi} the sequence $\bigl( {\bf E}( \cdot , \D u_{j}) \bigr)$ is equiintegrable on $\Omega$, and for a subsequence that we for
convenience do not relabel we have
$$
\int_{\Omega} \! \mathbf{E}( \cdot , \D u_{j}) \, \dd m(z) \to \ell \in \R .
$$
Extracting a further subsequence, again not relabelled, we may assume that $( \D u_j )$ generates the $\WW^{1,2}$-gradient Young measure $(\nu_z)_{z\in \Omega}$.
First, note that the $\LL^q$ bound entails
$$
\int_{\Omega} \! \langle \nu_{z} , {\bf K} \rangle \, \dd m(z) = \lim_{j \to \infty}\int_{\Omega} \! K_{u_j} \, \dd m(z) < +\infty ,
$$
so that, in particular, $\langle \nu_{z},{\bf K} \rangle < +\infty$ holds for a.e.~$z \in \Omega$. If therefore ${\bf E}_{\ast}(z, \cdot )$ denotes the lower semicontinuous
envelope of ${\bf E}(z, \cdot )$, then ${\bf E}_{\ast}(z, \cdot )={\bf E}(z, \cdot )$ $\nu_{z}$-a.e. for $\mathscr{L}^2$ a.e.~$z \in \Omega$.
Hence we get by general Young measure theory \cite{Balder},
\begin{equation}
\label{eq:Balder2}
\lim_{j\to \infty}\int_\Omega{\bf E}(z,\D u_j)\,\dd m(z) =\ell \geq
\int_\Omega \int_{\R^{2\times 2}}{\bf E}(z,A) \,\dd \nu_z(A) \, \dd m(z) > -\infty .
\end{equation}
For a.e.\ $z\in \Omega$ it is routine to check that we have  
\begin{equation}
\label{eq:conditionsbarycenter}
A_{z} \equiv \langle \nu_{z},\mathrm{Id} \rangle \in \M_{+} \, \mbox{ and } \, \nu_{z} \mbox{ is a homogeneous $\WW^{1,2}$ gradient Young measure.}
\end{equation}
Let us fix a point $z\in \Omega$ for which \eqref{eq:conditionsbarycenter} holds.
The measure $\nu_{z}$ is generated, as a homogeneous Young measure over $\DD$, by taking a suitable diagonal subsequence $(\psi_{j,\lambda_{j}})$ of
$\psi_{j,\lambda}(w)=\lambda^{-1}\bigl( u_{j}(z_0+\lambda w)-u_{j}(z_{0}) \bigr)$, where $j\to \infty$ and $\lambda_j \to 0$, see~\cite[Theorem 2.8]{AFGKKpreprint}.
In particular,  $\psi_{j,\lambda_{j}} \colon \DD \to \C$ is a sequence of homeomorphisms such that $\psi_{j,\lambda_{j}} \to A_z$ weakly in $\WW^{1,2}( \DD )$
and $\sup_{j}\|K_{\psi_{j,\lambda_{j}}}\|_{\LL^q(\DD )} <\infty$. 
Then we are precisely in position to apply Proposition~\ref{thm:intdistort}, whereby we find a sequence $(f_j)$ of principal homeomorphisms of class $\WW^{1,1}_{\loc}( \C )$
satisfying $f_j \to A_{z}$ locally uniformly in $\mathbb{C}$, $f_j \to A_{z}$ weakly in $\WW^{1,2}_{\loc}( \DD )$,  $A_{f_j}=\fint_{\DD} \! \D f_j \, \dd m(z) \to A_{z}$,
$K_{f_j}=K_{\psi_{j,\lambda_{j}}}$ a.e.~in $\DD$ and $( \D f_{j}|_{\DD} )$ generates $\nu_z$. Now as $f_j$ are principal maps it follows that $( J_{f_j})$ is bounded in
$\LL^{1}_{\loc}( \C )$, and therefore we have for $s \in (\frac{p}{2-p},q)$ that
$$
| \D f_{j}|^{p} = J_{f_j}^{\frac{p}{2}}K_{f_j}^{\frac{p}{2}} \leq J_{f_j}^{\frac{ps}{2s-p}}+K_{f_j}^{s}
$$
is equiintegrable on $\DD$. In view of Proposition~\ref{prop:equi} the sequence $\bigl( {\bf{E}}(z,\D f_j(w)) \bigr)$ is equiintegrable over $\DD$
and since ${\bf E}(z, \cdot )$ is principal quasiconvex, it is in particular rank-one convex on $\M_{+}$, and so continuous there, hence
$$
\int_{\M} \! {\bf E}(z,A) \,\dd \nu_{z}(A) = \lim_{j \to \infty} \fint_{\DD} \! {\bf E}(z, \D f_{j}(w)) \, \dd m(w) \geq \lim_{j \to \infty} {\bf E}(z,A_{f_j}) = {\bf E}(z,A_z ),
$$
where the last inequality follows by the assumed principal $\WW^{1,1}$ quasiconvexity.
Inserting the Jensen inequality for each $\nu_z$ in \eqref{eq:Balder2} yields the required lower semicontinuity. 
\end{proof}

\begin{proof}[Proof of Corollary~\ref{existence}]
We start by establishing a coercivity inequality and note that the growth condition \eqref{eq:growthconditions} implies that
${\bf E}(A) \geq -C\bigl( 1+|A|^{p}+K_{A}+|\log J_{A}| \bigr)$ holds for all $A \in \M_{+}$. Consequently, invoking Young's inequality in
a routine manner we find positive constants $c_1$, $c_{2}>0$ such that ${\bf E}(A)+{\bf P}(A) \geq c_{1}|A|^{2}+c_{1}K_{A}^{q}-c_{2} - C| \log J_{A}|$
holds for all $A \in \M_{+}$. Next, combining the bound \eqref{eq:elementaryestimatelog} with \cite[Proposition 12.8]{AFGKKpreprint} as in the proof of
Proposition~\ref{prop:equi} above we arrive at
$$
\int_{\Omega} \! \bigl( {\bf E}( \D u) + {\bf P}( \D u) \bigr) \, \dd m(z) \geq c_{0}\int_{\Omega} \! \bigl( | \D u|^{2}+K_{u}^{q} \bigr) \, \dd m(z) - C_{0}  
$$
for all $u \in \mathcal{A}_{g}$, where $c_0$, $C_0 > 0$ are positive constants that in particular are independent of $u$. Again using Young's inequality
in a standard manner we infer that the energy including the forcing term,
$$
\int_{\Omega} \! \bigl( {\bf E}( \D u)+{\bf P}( \D u) \bigr) \, \dd m(z) + \bigl\langle F,u \bigr\rangle
$$
is bounded below on $\mathcal{A}_{g}$ and that, since ${\bf E}( \D g)+{\bf P}( \D g)$ is integrable over $\Omega$ by assumption, any minimizing sequence
$( u_{j}) \subset \mathcal{A}_g$ is bounded in $\WW^{1,2}( \Omega )$ and has distortions $( K_{u_j})$ bounded in $\LL^{q}( \Omega )$. By general principles
we may then extract a subsequence (for convenience not relabelled) such that $u_{j} \to u$ weakly in $\WW^{1,2}( \Omega )$. It then follows that
$u \in g+\WW^{1,2}_{0}( \Omega )$ and, see \cite[Ch.~21]{AIMbook}, that $u \colon \Omega \to g( \Omega )$ is a homeomorphism with $K_{u} \in \LL^{q}( \Omega )$.
In particular, $u \in \mathcal{A}_{g}$ and  by  \cite{Ball1} and Theorem~\ref{thm:intdistort-lsc} $u$ is therefore a minimizer, as required. 
\end{proof}

\subsection{Lower Semicontinuity of the critical Burkholder energy }

\subsubsection{An integrability property of the Burkholder functional}
Before starting the proof we record the following integrability property of the Burkholder functional. That a result of this type holds is well-known \cite{AIMbook},
but our statement here is more explicit.

\begin{proposition}[$\LL^1$ boundedness] \label{prop:l1bound} Let $p > 2$ and put $K \equiv p/(p-2)$. There exists a constant $C=C(p)$ with the following property.
If $u \colon \Omega \to \C$ is a $K$-quasiregular map and $\DD_{R}(z_{0}) \Subset \Omega$, then for all $\delta\in (0,1)$ we have
\begin{equation}
\fint_{\DD_{\delta R}(z_{0})} \! \B_{p}( \D u) \, \dd m(z) \geq -\frac{C}{R^{pK}(1-\delta )^{pK}}\sup_{\DD_{R}(z_{0})}|u|^{p}.
\end{equation}
\end{proposition}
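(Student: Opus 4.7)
The natural approach is via the Stoilow factorization applied to a suitable extension of $u$. I would translate so that $z_0 = 0$ and extend $u|_{\DD_R}$ to a $K$-quasiregular map on all of $\C$ by extending its Beltrami coefficient by zero outside $\DD_R$, so that the extension is conformal (holomorphic) outside $\DD_R$. By Stoilow's theorem, this extension factors as $u = \phi \circ h$, where $h\colon \C\to\C$ is a $K$-quasiconformal principal map that is conformal outside $\DD_R$ (normalized by $h(z) = z + O(1/z)$ at $\infty$), and $\phi\colon \Omega_0 \to \C$ is holomorphic on $\Omega_0 \equiv h(\DD_R)$. With this principal normalization, the standard quasiconformal geometry gives $\diam(\Omega_0) \asymp R$ and $\sup_{\Omega_0}|\phi| = \sup_{\DD_R}|u| =: M$.

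Because $\D\phi$ is conformal and $\B_p$ is isotropic and $p$-homogeneous, the chain rule yields the pointwise identity $\B_p(\D u)(z) = |\phi'(h(z))|^p\, \B_p(\D h)(z)$. The choice $K = p/(p-2)$ is critical: writing $\B_p(\D h) = \bigl((p/2-1)K_h - p/2\bigr)|\D h|^{p-2}J_h$, the scalar coefficient lies in $[-1,0]$ (using $1\le K_h\le K$), so $0 \ge \B_p(\D h) \ge -|\D h|^{p-2}J_h$ pointwise. For the holomorphic factor, Cauchy's derivative estimate gives $|\phi'(w)| \le M/\dist(w,\partial\Omega_0)$, and the Mori-type distortion estimate for $h^{-1}$ (which is $1/K$-H\"older continuous) yields, for $z\in \DD_{\delta R}$,
$$
\dist\bigl(h(z),\partial\Omega_0\bigr) \;\gtrsim\; R\,(1-\delta)^{K}.
$$
Raising this to the $p$-th power is precisely what produces the $(1-\delta)^{-pK}$ factor in the denominator.

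Assembling these ingredients, one bounds
$$
-\!\!\int_{\DD_{\delta R}}\!\B_p(\D u)\,\dd m \;\le\; \sup_{z\in\DD_{\delta R}}|\phi'(h(z))|^p \;\cdot\!\int_{\DD_R}\!\bigl|\B_p(\D h)\bigr|\,\dd m,
$$
and dividing by $|\DD_{\delta R}|$ yields the claimed estimate, provided one controls the residual integral $\int_{\DD_R}|\B_p(\D h)|$ in terms of $R$ alone. This last step is the main obstacle: in the critical regime $p = 2K/(K-1)$, the naive pointwise bound $|\B_p(\D h)| \le K^{(p-2)/2}J_h^{p/2}$ is \emph{borderline non-integrable} (indeed, one checks directly that $\int J_h^{p/2}$ is log-divergent for the radial stretch). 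The correct approach is to exploit the cancellation in $\B_p(\D h)$ directly, using the sharp Astala--Iwaniec--Prause--Saksman $\LL^1$-integrability for the Burkholder functional on principal $K$-quasiconformal maps, which gives $\int_{\DD_R}|\B_p(\D h)|\,\dd m \lesssim R^{2-p}$ (with the $R$-power coming from the principal normalization and the $p$-homogeneity of $\B_p$ under scaling). Tracking all constants through this chain of estimates produces the stated lower bound with its explicit dependence on $R$ and $(1-\delta)$.
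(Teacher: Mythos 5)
Your approach is essentially the same as the paper's. Both Stoilow-factorize $u=\phi\circ h$ on the disk with $h$ a principal $K$-quasiconformal map and $\phi$ holomorphic, both bound $\sup_{\DD_{\delta R}}|\phi'\circ h|$ via Cauchy's estimate combined with quasiconformal H\"older/Mori distortion, both pull that supremum out of the integral using that $\B_p(\D h)\le 0$ on $Q_2(K)$, and both conclude with an $\LL^1$-bound for $\B_p$ along principal $K$-quasiconformal maps. The paper normalizes $R=1$ at the outset and invokes for the last step the Burkholder area inequality from \cite{AFGKKpreprint}: since $\B_p(\D h)\le 0$ on $Q_2(K)$, that inequality $\int_{\DD}\B_p(\D h)\,\dd m\ge\pi\B_p(A_h)\ge-\pi$ is precisely the sharp quantitative form of the $\LL^1$-bound you appeal to. Your observation that the naive pointwise estimate is log-divergent at $p=p_K$ (so the cancellation in $\B_p$ is essential) correctly identifies the heart of the matter.

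The one concrete error is the scaling claim $\int_{\DD_R}|\B_p(\D h)|\lesssim R^{2-p}$: the correct power is $R^2$. Rescaling $\tilde h(w)\equiv h(Rw)/R$ produces a normalized principal $K$-quasiconformal map with $\D\tilde h(w)=\D h(Rw)$ --- the derivative matrix is left unchanged by this rescaling, so the $p$-homogeneity of $\B_p$ contributes no extra power of $R$ --- and therefore
\[
\int_{\DD_R}\bigl|\B_p(\D h)\bigr|\,\dd m(z)=R^2\int_{\DD}\bigl|\B_p(\D\tilde h)\bigr|\,\dd m(w)\le\pi R^2 .
\]
With $R^2$ in place, your chain of inequalities gives $\dashint_{\DD_{\delta R}}\B_p(\D u)\ge -C\,\delta^{-2}R^{-p}(1-\delta)^{-pK}\sup_{\DD_R}|u|^p$, which is exactly what one obtains by scaling the paper's $R=1$ computation.
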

The point here is, as was mentioned in the Introduction, that a $K$-quasiregular map need not be of class $\WW^{1,p}_{\loc}$. That the Burkholder functional
is locally integrable despite being nonpositive and $p$-homogeneous on the derivative of a $K$-quasiregular map comes down to it vanishing on the boundary of
the $K$-quasiconformal well,  where the large values of the derivative must concentrate.

\begin{proof}
Changing variables we may without loss of generality assume that $z_{0}=0$ and $R=1$. We then Stoilow factorize $u$ on $\DD$, whereby
$u=h \circ f$ for a $K$-quasiconformal normalized principal map $f \colon \C \to \C$ and a holomorphic map $h \colon f(\DD ) \to \C$.
If $m \equiv \sup_{\DD} |u|$, then also $m = \sup_{f(\DD )}| h|$ and so by Cauchy's integral formula and H\"older  properties of $K$-quasiconformal principal
maps \cite{AIMbook} we estimate
$$
\sup_{\delta \DD}\bigl| h^{\prime} \circ f \bigr| \leq \frac{m}{\dist (f( \delta \DD ),\partial f(\DD ))} \leq c_{p}\frac{m}{(1-\delta )^{K}}.
$$
Next, using that $\B_p$ is nonpositive and the Burkholder area inequality \cite{AFGKKpreprint}:
\begin{eqnarray*}
  \int_{\delta \DD} \! \B_{p}( \D u) \, \dd m(z) &=& \int_{\delta \DD} \! \bigl| h^{\prime} \circ f \bigr|^{p} \B_{p}( \D f) \, \dd m(z)\\
  &\geq& \sup_{\delta \DD} \bigl| h^{\prime} \circ f \bigr|^{p} \int_{\delta \DD} \! \B_{p}( \D f) \, \dd m(z)\\
  &\geq& c_{p}^{p}\frac{m^{p}}{(1-\delta )^{pK}}\int_{\DD} \! \B_{p}( \D f) \, \dd m(z)\\
  &\geq& -c_{p}^{p}\frac{m^{p}}{(1-\delta )^{pK}}\pi ,
\end{eqnarray*}
as required.
\end{proof}

\subsection{Proof of Theorem \ref{swlscBurk}} 

\begin{proof}
We split the proof into two steps and remark that the assumption $u \in \WW^{1,p}_{\loc}( \Omega )$ only is used in the second step.
\bigskip

\noindent
By Proposition~\ref{prop:l1bound},  the sequence $\bigl( \B_{p}( \D u_j ) \bigr)$ is bounded in $\LL^{1}_{\loc}( \Omega )$. 
Because $\LL^{1}_{\loc}( \Omega ) \subset \CC_{c}( \Omega )^{\ast}$ continuously, the Banach-Alaoglu compactness theorem
implies that any subsequence of $\bigl( \B_{p} ( \D u_{j}) \bigr)$ admits a further subsequence that converges weakly$\mbox{}^\ast$
to a Radon measure on $\Omega$. Any such limit must necessarily be a nonpositive Radon measure. Assume that $-\lambda$ is such
a weak$\mbox{}^\ast$ limit and consider the Lebesgue-Radon-Nikodym decomposition
$$
\lambda = \frac{\dd \lambda}{\dd \Leb^2}\Leb^{2} + \lambda^{s}, \, \mbox{ where } \lambda^{s} \perp \Leb^{2}.
$$
We next estimate each term in this decomposition.
\bigskip

\noindent
\textbf{Step 1.} $-\frac{\dd \lambda}{\dd \Leb^{2}} \geq \B_{p}( \D u)$ holds $\Leb^2$ almost everywhere in $\Omega$.
The proof consist of a rather general localization argument at a point of differentiabilty of $u$.
Arguments of this sort are known in the literature as blow-up arguments and in the present context go back to I.~Fonseca and S.~M\"{u}ller \cite{FoMu}.
The fact that the Burkholder functional is nonpositive on its effective domain means that one must be more careful than in the classical
set-up, where the considered functionals are bounded below and therefore concentration effects are not so harmful.
\bigskip

Let $z_{0} \in \Omega$ be a point satisfying
\begin{equation}\label{z1}
u \mbox{ is differentiable at } z_{0} \mbox{ with Jacobi matrix } A_{0} \equiv \D u(z_{0})
\end{equation}
and
\begin{equation}\label{z2}
  \frac{\dd \lambda}{\dd \Leb^2}(z_{0}) = \lim_{r \searrow 0} \frac{\lambda (\DD_{r}(z_{0}))}{\Leb^{2}( \DD_{r}(z_{0}))}.
\end{equation}
From the local uniform convergence of $u_j$ to $u$ and \eqref{z1} follows that
$$
\lim_{r \searrow 0} \lim_{j \to \infty} \frac{1}{r}\sup_{z \in \DD_{r}(z_{0})} \bigl| u_{j}(z)-u_{j}(z_{0})-A_{0}(z-z_{0}) \bigr| = 0.
$$
Next, for $r \in (0, \dist (z_{0}, \partial \Omega ))$ with $\lambda (\partial \DD_{r}(z_{0}))=0$ we have
$$
\lim_{j \to \infty} \int_{\DD_{r}(z_{0})} \! \B_{p}( \D u_j ) \, \dd m(z) = -\lambda ( \DD_{r}(z_{0})),
$$
and so by \eqref{z2} we get in particular
$$
\lim_{E \notni r \searrow 0} \lim_{j \to \infty} \fint_{\DD_{r}(z_{0})} \! \B_{p}( \D u_{j}) \, \dd m(z) = -\frac{\dd \lambda}{\dd \Leb^2}(z_{0}),
$$
where $E \equiv \{ r \in (0,\dist (z_{0},\partial \Omega )) : \, \lambda (\partial \DD_{r}(z_{0})) > 0 \}$ is an at most countable set.
Fix $s \in (0,1)$. In view of the above we may choose a null sequence $r_j \searrow 0$ such that in terms of
$$
v_{j}(z) \equiv \frac{u_{j}(z_{0}+r_{j}z)-u_{j}(z_{0})}{r_{j}}, \, z \in \DD,
$$
we have
$$
\lim_{j \to \infty} \int_{\DD_{s}(0)} \! \B_{p}( \D v_j ) \, \dd m(z) = -\pi s^{2}\frac{\dd \lambda}{\dd \Leb^2}(z_{0}).
$$
Here it is clear that $v_j$ are $K$-quasiregular and $v_{j} \to A_0$ uniformly on $\DD$.
The matrix $A_0$ is $K$-quasiconformal, so if its conformal-anticonformal coordinates are $(A_{0}^{+},A_{0}^{-})$, then
$|A_{0}^{-}| \leq |A_{0}^{+}|/(p-1)$. Its Stoilow factorization is therefore (when $A_{0} \neq 0$)
$A_{0} = h \circ f$ with $h(z) \equiv A_{0}^{+}z$ and
$$
f(z) \equiv \left\{
\begin{array}{cl}
  z+\frac{A_{0}^{-}}{A_{0}^{+}}\bar{z} & \mbox{ if } |z| \leq 1,\\
  z+\frac{A_{0}^{-}}{A_{0}^{+}}\frac{1}{z} & \mbox{ if } |z| > 1.
\end{array}
\right.
$$
If $v_j = h_{j} \circ f_j$ is the Stoilow factorization of $v_j$, then using uniqueness of the factorization it is not difficult to see that
$f_j \to f$ uniformly on $\C$ (if $A_0 = 0$ we use normality to extract a convergent subsequence and this then defines $f$) and $h_j \to h$
locally uniformly on $f( \DD )$. Consequently we have that $h_{j}^{\prime}(f_{j}(z)) \to A_{0}^{+}$
uniformly in $z \in \DD_{s}(0)$, hence using the properties of $\B_p$ and in particular the Burkholder area inequality (see \cite{AFGKKpreprint})
and that $A_{f_j} \to (1,\frac{A_{0}^{-}}{A_{0}^{+}})$:
\begin{eqnarray*}
-\pi s^{2} \frac{\dd \lambda}{\dd \Leb^2}(z_{0}) &=& \lim_{j \to \infty} \int_{\DD_{s}(0)} \! \B_{p}( \D v_j ) \,\dd m(z)\\
&=& \lim_{j \to \infty} \int_{\DD_{s}(0)} \! \bigl| h_{j}^{\prime} \circ f_{j} \bigr|^{p} \B_{p}( \D f_{j}) \, \dd m(z)\\
&=& \lim_{j \to \infty} \int_{\DD_{s}(0)} \! |A_{0}^{+}|^{p} \B_{p}( \D f_{j}) \, \dd m(z)\\
&\geq & |A_{0}^{+}|^{p} \limsup_{j \to \infty} \int_{\DD} \! \B_{p}( \D f_{j}) \, \dd m(z)\\
&\geq& |A_{0}^{+}|^{p} \limsup_{j \to \infty} \bigl( \pi \B_{p}(A_{f_j}) \bigr) = \pi \B_{p}(A_{0}).
\end{eqnarray*}
Finally, taking $s \nearrow 1$ we conclude that $-\frac{\dd \lambda}{\dd \Leb^{2}}(z_{0}) \geq \B_{p}( \D u(z_{0}))$ holds
at all points $z_{0} \in \Omega$ satisfying \eqref{z1} and \eqref{z2}. Because this includes $\Leb^2$ almost all points
in $\Omega$ the proof of Step 1 is finished.
\bigskip

\noindent
\textbf{Step 2.} $\lambda^{s} = 0$ when $u \in \WW^{1,p}_{\loc}( \Omega )$.
\bigskip

It suffices to show that $\lambda^{s}(B)=0$ for each disk $B = \DD_{R}( w_0 ) \Subset \Omega$. Fix such a disk $B$.
Let $z_{0} \in B$ satisfy
\begin{equation}\label{z3}
  \lim_{r \searrow 0} \frac{\lambda ( \DD_{r}(z_{0}))}{\Leb^{2}( \DD_{r}(z_{0}))} = +\infty
\end{equation}
and
\begin{equation}\label{z4}
  \lim_{r \searrow 0} \frac{1}{\lambda (\DD_{r}(z_{0}))}\int_{\DD_{r}(z_{0})} \! | \D u|^{p} \, \dd m(z)=0.
\end{equation}
Put
$$
v_{j,r}(z) \equiv \frac{u_{j}(z_{0}+rz)-u_{j}(z_{0})}{r\rho_{r}}, \quad z \in \DD, \mbox{ where }
\rho_{r} \equiv \left(\frac{\lambda ( \DD_{r}(z_{0}))}{\Leb^{2}( \DD_{r}(z_{0}))}\right)^{\frac{1}{p}}.
$$
We invoke \eqref{z4} as follows. Recall that $u_j$ are $K$-quasiregular and that $u_{j} \to u$ locally uniformly on $\Omega$
and boundedly in $\WW^{1,q}_{\loc}( \Omega )$ for each $q<p$, hence we get as $j \to \infty$:
\begin{eqnarray*}
  \int_{\DD} \! | \D v_{j,r}|^{2} \, \dd m(z) &=& \frac{\pi}{\rho_{r}^2} \fint_{\DD_{r}(z_{0})} \! | \D u_{j}|^{2} \, \dd m(z)
  \leq \frac{\pi K}{\rho_{r}^2}\fint_{\DD_{r}(z_{0})} \! \det \D u_{j} \, \dd m(z)\\
  &\to& \frac{\pi K}{\rho_{r}^{2}} \fint_{\DD_{r}(z_{0})} \! \det \D u \, \dd m(z) \leq
  \frac{\pi K}{\rho_{r}^2} \fint_{\DD_{r}(z_{0})} \! | \D u|^{2} \, \dd m(z)\\
  &\leq& \pi K \left( \rho_{r}^{-p}\fint_{\DD_{r}(z_{0})} \! | \D u|^{p} \, \dd m(z) \right)^{\frac{2}{p}}\\
  &=& \pi K \left( \frac{1}{\lambda \bigl( \DD_{r}(z_{0}) \bigr)}\int_{\DD_{r}(z_{0})} \! | \D u|^{p} \, \dd m(z) \right)^{\frac{2}{p}}.
\end{eqnarray*}
Consequently,
\begin{equation}\label{fromz4}
  \lim_{r \searrow 0} \lim_{j \to \infty} \int_{\DD} \! | \D v_{j,r}|^{2} \, \dd m(z) = 0.
\end{equation}
Next, we get as in Step 2,
$$
\lim_{j \to \infty} \int_{\DD_{r}(z_{0})} \! \B_{p}( \D u_j ) \, \dd m(z) = -\lambda ( \DD_{r}(z_{0}))
$$
when $\lambda ( \partial \DD_{r}(z_{0}))=0$, hence
$$
\lim_{E \notni r \searrow 0}\lim_{j \to \infty} \int_{\DD} \! \B_{p}( \D v_{j,r}) \, \dd m(z) = -1,
$$
where $E \equiv \{ r \in (0,\dist (z_{0},\partial \Omega )) : \, \lambda (\partial \DD_{r}(z_{0})) > 0 \}$ is an at most countable set.
In order to combine this with \eqref{z3} we require some routine arguments from measure theory that we briefly recall here
for convenience of the reader. Define
$$
\Lambda (t) \equiv \limsup_{E \notni r \searrow 0} \frac{\lambda \bigl( \DD_{tr}(z_{0}) \bigr)}{\lambda \bigl( \DD_{r}(z_{0}) \bigr)}, \, t \in (0,1] .
$$
We assert that \eqref{z3} implies that $\Lambda (t) \geq t^{2}$ for all $t \in (0,1]$. Indeed suppose for a contradiction that it fails
at some $s \in (0,1)$, so that $\Lambda (s) < s^2$. First note that since the function $r \mapsto \lambda \bigl( \DD_{r}(z_{0})\bigr)$
is left-continuous and the set $E$ is at most countable we actually have that
$$
\Lambda (t) = \limsup_{r \searrow 0} \frac{\lambda \bigl( \DD_{tr}(z_{0})\bigr)}{\lambda \bigl( \DD_{r}(z_{0}) \bigr)} , \, t \in (0,1] ,
$$
and so choosing $t \in (0,s)$ with $\Lambda (s)< t^2$ we get by iteration that for some $\delta > 0$,
$$
\lambda \bigl( \DD_{s^{j}\delta}(z_{0}) \bigr) < t^{2j}\lambda \bigl( \DD_{\delta}(z_{0})\bigr)
$$
holds for all $j \in \N$. But this contradicts \eqref{z3} since then, as $j \to \infty$,
$$
\frac{\lambda \bigl( \DD_{s^{j}\delta}(z_{0}) \bigr)}{s^{2j}\delta^{2}} < \left( \frac{t}{s} \right)^{2j} \frac{\lambda \bigl( \DD_{\delta}(z_{0})\bigr)}{\delta^{2}} \to 0.
$$
In terms of the maps $v_{j,r}$ this amounts to that the bound
\begin{equation}\label{contra}
\limsup_{E \notni r \searrow 0} \lim_{j \to \infty} \int_{\DD_{s}(0)} \! \B_{p}( \D v_{j,r}) \, \dd m(z) \leq -\pi s^{2}
\end{equation}
holds for each $s \in (0,1]$. In order to conclude the proof we fix $s \in (0,1)$, use \eqref{fromz4} and \eqref{contra} to select a null sequence
$r_{j} \searrow 0$ such that $v_{j} \equiv v_{j,r_{j}} \to 0$ strongly in $\WW^{1,2}( \DD )$ and
$$
\lim_{j \to \infty}\int_{\DD_{s}(0)} \! \B_{p}( \D v_{j}) \, \dd m(z) \leq -\pi s^{2}.
$$
Now $v_j \colon \DD \to \C$ are $K$-quasiregular maps and so we may Stoilow factorize as $v_{j} = h_{j} \circ f_{j}$, where $f_{j}\colon \C \to \C$
are $K$-quasiconformal normalized principal and $h_{j}\colon f_{j}( \DD ) \to \C$ are holomorphic. Taking a further subsequence if necessary (not relabelled)
we may assume that $f_{j} \to f$ uniformly on $\C$, where $f \colon \C \to \C$ is a $K$-quasiconformal normalized principal map. We then also have that
$h_j \to 0$ locally uniformly on $f( \DD )$. It follows that $h_{j}^{\prime} \circ f_{j} \to 0$ uniformly on $\DD_{s}(0)$ and therefore
\begin{eqnarray*}
  -\pi s^{2} &\geq& \lim_{j \to \infty} \int_{\DD_{s}(0)} \! \bigl| h_{j}^{\prime} \circ f_{j}\bigr|^{p}  \B_{p}( \D f_{j}) \, \dd m(z)\\
  &\geq& \sup_{\DD_{s}(0)}\bigl| h_{j}^{\prime} \circ f_{j} \bigr|^{p}\int_{\DD} \! \B_{p}( \D f_{j}) \, \dd m(z)\\
  &\geq& -\pi \sup_{\DD_{s}(0)}\bigl| h_{j}^{\prime} \circ f_{j} \bigr|^{p} \to 0
\end{eqnarray*}
as $j \to \infty$, which is impossible. The set of points $z_{0} \in B$ satisfying \eqref{z3} and \eqref{z4} is therefore empty, and
since $\lambda^s$ almost all points in $B$ have those properties we infer that $\lambda^{s}(B)=0$ as required.
\end{proof}

For the Dirichlet classes we have the following almost immediate corollary.
\begin{corollary}\label{betterBurk}
Let $p>2$ and put $K \equiv p/(p-2)$. Let $\Omega$ be a bounded open subset of $\C$ and $g \colon \C \to \C$ be a $K$-quasiregular map of class $\WW^{1,p}_{\loc}( \C )$
If $u_j \in g +\WW^{1,p}_{0}(\Omega)$ are $K$-quasiregular and $u_{j} \to u$ weakly in $\WW^{1,p}( \Omega )$, then $u \in g+\WW^{1,p}_{0}( \Omega )$ is $K$-quasiregular and
\[
\liminf_{j \to \infty} \int_{\Omega} \! \B_{p}( \D u_j) \, \dd m(z) \geq \int_{\Omega} \! \B_{p}( \D u) \, \dd m(z).
\]
\end{corollary}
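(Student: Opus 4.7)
The plan is to reduce the corollary to Theorem~\ref{swlscBurk} by an extension trick in which the boundary layer contributes an additive constant that cancels on both sides of the lower semicontinuity inequality.

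First I would dispose of the qualitative claims. The affine space $g+\WW^{1,p_K}_{0}(\Omega)$ is weakly closed in $\WW^{1,p_K}(\Omega)$, so $u\in g+\WW^{1,p_K}_{0}(\Omega)$. For the $K$-quasiregularity of $u$, note that $p_K>2$ so $u_j\weak u$ also in $\WW^{1,2}_{\loc}(\Omega)$; the pointwise inequality $|\D u_j|^2\le K\det \D u_j$ passes to the limit because $\det \D u_j\weak \det \D u$ distributionally while $|\D u|^2$ is weakly lower semicontinuous. Normality of $K$-quasiregular maps bounded in $\WW^{1,2}_{\loc}$ then combines with the uniqueness of the weak limit to yield local uniform convergence $u_j\to u$ on $\Omega$.

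Next comes the extension. Fix a bounded open $\tilde\Omega\supset\bar\Omega$ and set
\[
\tilde u_j:=u_j\,\mathbf 1_\Omega+g\,\mathbf 1_{\tilde\Omega\setminus\Omega},\qquad \tilde u:=u\,\mathbf 1_\Omega+g\,\mathbf 1_{\tilde\Omega\setminus\Omega}.
\]
The Dirichlet conditions $u_j-g,\,u-g\in \WW^{1,p_K}_{0}(\Omega)$ make the traces match across $\partial\Omega$, so $\tilde u_j,\tilde u$ are Sobolev $K$-quasiregular on $\tilde\Omega$, and the normality argument again gives locally uniform convergence $\tilde u_j\to\tilde u$ on $\tilde\Omega$. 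Pick $\Omega''$ with $\bar\Omega\subset\Omega''\Subset\tilde\Omega$ and $\Leb^2(\partial\Omega'')=0$. Applying Theorem~\ref{swlscBurk} on $\tilde\Omega$ to $(\tilde u_j,\tilde u)$ with test set $\Omega''$ gives
\[
\liminf_{j\to\infty}\int_{\Omega''}\!\B_{p_K}(\D\tilde u_j)\,\dd m\;\ge\;\int_{\Omega''}\!\B_{p_K}(\D\tilde u)\,\dd m,
\]
and splitting each integral as $\int_\Omega+\int_{\Omega''\setminus\Omega}$, the annulus term $\int_{\Omega''\setminus\Omega}\B_{p_K}(\D g)\,\dd m$ is the same on both sides (and finite by Proposition~\ref{prop:l1bound}), so it cancels and yields the desired inequality on $\Omega$.

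The delicate point is verifying the regularity hypothesis $\tilde u\in \WW^{1,p_K}_{\loc}(\tilde\Omega)$: on $\Omega$ we have $\tilde u=u\in \WW^{1,p_K}$, but outside $\Omega$ Astala's theorem only provides $g\in \WW^{1,q}_{\loc}$ for every $q<p_K$, stopping just short of the critical exponent. I would handle this by reading Theorem~\ref{swlscBurk} as essentially local. Step~1 of its proof (the absolutely continuous bound on the weak-$*$ limit $\tilde\lambda$) only requires a.e.\ differentiability of $\tilde u$, which is supplied by $K$-quasiregularity. Step~2 (the annihilation of the singular part) localises: on $\Omega$ it applies verbatim thanks to $u\in \WW^{1,p_K}(\Omega)$, and on $\tilde\Omega\setminus\bar\Omega$ the sequence $\tilde u_j=g$ is \emph{constant in $j$}, so the weak-$*$ limit is absolutely continuous there and no singular mass can form. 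The only possibly problematic locus is therefore $\partial\Omega$, where one must rule out boundary concentration of $|\B_{p_K}(\D\tilde u_j)|\,\Leb^2$. This I expect to handle by the familiar boundary blow-up: using the translation invariance $\B_{p_K}(\D(\tilde u_j-c))=\B_{p_K}(\D\tilde u_j)$ for constants $c$ to subtract $g(z_0)$ at a boundary point $z_0\in\partial\Omega$, together with the Dirichlet matching $u_j=g$ on $\partial\Omega$ and Proposition~\ref{prop:l1bound} applied on rescaled discs straddling $\partial\Omega$. Executing this boundary analysis carefully is, I expect, the hardest step of the proof.
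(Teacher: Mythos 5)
Your overall strategy---extend $u_j,u$ by $g$ to a larger domain, invoke Theorem~\ref{swlscBurk} on an intermediate $\Omega''\Supset\bar\Omega$, and cancel the identical annulus contributions---is the natural reading of how the corollary should follow, and the preliminary steps (weak closedness of the Dirichlet class, $K$-quasiregularity of $u$, and local uniform convergence via normality) are handled correctly. You also put your finger on the real obstacle: the extension $\tilde u$ equals $g$ off $\Omega$, and an entire $K$-quasiregular map is in general only in $\WW^{1,q}_{\loc}$ for $q<p_K$ (think of the radial stretch $z|z|^{1/K-1}$), so the hypothesis $\tilde u\in\WW^{1,p_K}_{\loc}(\tilde\Omega)$ of Theorem~\ref{swlscBurk} is \emph{not} available near $\partial\Omega$.

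The gap is that the boundary step is only sketched, and the sketch as written does not close. Reading Step~2 of the theorem's proof ``locally'' is legitimate away from $\partial\Omega$, but the condition \eqref{z4} used to kill the singular part relies on $|\D\tilde u|^{p}\in\LL^1_{\loc}$ near the point in question, which is exactly what may fail on $\partial\Omega$. The tools you propose to substitute for it are not quantitatively strong enough: after subtracting $g(z_0)$ and using the uniform $1/K$-H\"older estimate from normality in Proposition~\ref{prop:l1bound}, one obtains (with the correct scaling $R^{p}$ in the proposition) the bound $\int_{\DD_r(z_0)}\B_{p}(\D\tilde u_j)\,\dd m\geq -C$ with $C$ independent of $r$ and $j$; this shows $\lambda(\{z_0\})$ is finite but does not force it to vanish, so a singular atom at a boundary point is not excluded by this estimate. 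Ruling out that singular contribution on $\partial\Omega$ is where the work of the corollary actually lies, and as it stands the proposal identifies that work but does not carry it out.
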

We emphasize that we assume $u_j \to u$ weakly in the critical Sobolev space $\WW^{1,p}( \Omega )$ and that this of course automatically entails that $u \in \WW^{1,p}( \Omega )$.
If we only knew that $u_j$, $u$ were $K$-quasiregular and $u_j \to u$ uniformly on $\Omega$, then we would merely have that $\D u$ belonged to the local Marcinkiewicz
space $\LL^{p,\infty}_{\loc}( \Omega )$. Theorem \ref{swlscBurk} does not cover this situation.

\begin{proof}
Consider the metric neighbourhood $\Omega_{r} \equiv \DD_{r}( \Omega ) \equiv \bigl\{ z \in \C : \, \mathrm{dist }(z,\Omega ) < r \bigr\}$ for $r>0$.
It is clearly precompact and $\Omega \Subset \Omega_r$. Since the function $\mathrm{dist }( \cdot , \Omega )$ is Lipschitz we may use the coarea formula to see that
also $\mathscr{L}^{2}( \partial \Omega_r ) =0$ holds for $\mathscr{L}^{1}$ almost all $r>0$, and we fix such an $r$. Now extend the maps $u_j$, $u$ to $\C \setminus \Omega$
by $g$, and record that hereby $u_j$, $u$ are $K$-quasiregular maps in the critical Sobolev space $\WW^{1,p}_{\loc}( \C )$ and that $u_j \to u$ weakly in $\WW^{1,p}_{\loc}( \C )$.
The latter in particular implies that also $u_j \to u$ uniformly on $\C$, and so according to Theorem~\ref{swlscBurk}
$$
\liminf_{j \to \infty}\int_{\Omega_r} \! \B_{p}( \D u_{j}) \, \dd m(z) \geq \int_{\Omega_r} \! \B_{p}( \D u ) \, \dd m(z)
$$
holds. Note that here we have $u_j =u=g$ on $\Omega_{r}\setminus \Omega$ and so by standard properties of Sobolev functions and approximate
derivatives we get in particular that $\D u_j = \D u$ a.e.~on $\Omega_{r}\setminus \Omega$ (also in the pathological case where
$\mathscr{L}^{2}( \partial \Omega )>0$ that has not been excluded). Because $\B_{p}( \D u) \in \LL^{1}_{\loc}( \C )$ the required
conclusion follows from this.
\end{proof}

\section*{Acknowledgments}
D.F, K.A, A.K acknowledge the financial support of QUAMAP, the ERC Advanced Grant 834728, 
 and of  
the Severo Ochoa Programme 
CEX2019-000904-S.
A.G. was supported by Dr.\ Max R\"ossler, the Walter Haefner Foundation and the ETH Z\"urich Foundation.  
D.F and A.K were  partially supported by CM and UAM,
and A.K by Academy of Finland CoE Randomness and Structures, and Academy Fellowship Grant 355840. 
 D.F acknowledges financial support by PI2021-124-195NB-C32. 
  D.F also wants to acknowledge the hospitality of the Mathematical Institute of Oxford during the Summer of 2023.

\end{document}